\newtheorem{Thm}{Theorem}[section]
\newtheorem{Lem}[Thm]{Lemma}
\newtheorem{Prop}[Thm]{Proposition}
\theoremstyle{remark}
\newtheorem{Rem}[Thm]{Remark}
\numberwithin{equation}{section}
\newcommand{\comment}[1]{}
\newcommand{\ind}{{\bf 1}}
\def\indd#1{{\ind}_{\{#1\}}}
\def\inddd#1{{\ind}_{\left\{#1\right\}}}
\def\indn#1{\{#1_n\}_{n\in\N}}
\newcommand{\proba}{\mathbb P}
\newcommand{\esp}{{\mathbb E}}
\newcommand{\inv}{^{-1}}
\newcommand{\cov}{{\rm{Cov}}}
\newcommand{\var}{{\rm{Var}}}
\def\B{{\mathbb B}}
\def\G{{\mathbb G}}
\def\Z{{\mathbb Z}}
\newcommand{\eqnh}{\begin{eqnarray*}}
\newcommand{\eqne}{\end{eqnarray*}}
\newcommand{\eqnhn}{\begin{eqnarray}}
\newcommand{\eqnen}{\end{eqnarray}}
\newcommand{\equh}{\begin{equation}}
\newcommand{\eque}{\end{equation}}
\def\summ#1#2#3{\sum_{#1 = #2}^{#3}}
\def\prodd#1#2#3{\prod_{#1 = #2}^{#3}}
\newcommand{\eqd}{\stackrel{\rm d}{=}}
\def\topp#1{^{(#1)}}
\def\nn#1{{\left\|#1\right\|}}
\def\abs#1{\left|#1\right|}
\def\ccbb#1{\left\{#1\right\}}
\def\sccbb#1{\{#1\}}
\def\pp#1{\left(#1\right)}
\def\spp#1{(#1)}
\def\bb#1{\left[#1\right]}
\def\mmid{\;\middle\vert\;}
\def\ip#1{\left\langle#1\right\rangle}
\def\floor#1{\left\lfloor #1 \right\rfloor}
\def\sfloor#1{\lfloor #1 \rfloor}
\def\vv#1{{\boldsymbol #1}}
\def\vvi{{\boldsymbol i}}
\def\vvj{{\boldsymbol j}}
\def\vvl{{\boldsymbol \ell}}
\def\vvn{{\boldsymbol n}}
\def\vvs{{\boldsymbol s}}
\def\vvt{{\boldsymbol t}}
\def\vvu{{\boldsymbol u}}
\def\vvw{{\boldsymbol w}}
\def\vvH{{\boldsymbol H}}
\def\qmand{\quad\mbox{ and }\quad}
\def\mwith{\mbox{ with }}
\def\qmwith{\quad\mbox{ with }\quad}
\def\mfa{\mbox{ for all }}
\def\mmas{\mbox{ as }}
\def\wt#1{\widetilde{#1}}
\def\wb#1{\overline{#1}}
\def\what#1{\widehat{#1}}
\def\weakto{\Rightarrow}
\def\limn{\lim_{n\to\infty}}
\def\Z{{\mathbb Z}}
\def\R{{\mathbb R}}
\def\N{{\mathbb N}}
\begin{document}

\begin{frontmatter}



\title{Operator-scaling Gaussian random fields via aggregation}

\runtitle{Operator-scaling Gaussian random fields via aggregation}

\begin{aug}

\author{\fnms{Yi} \snm{Shen}\thanksref{a}\ead[label=e1]{yi.shen@uwaterloo.ca}}
\and
\author{\fnms{Yizao} \snm{Wang}\thanksref{b}\corref{}\ead[label=e2]{yizao.wang@uc.edu}}
\address[a]{Department of Statistics and Actuarial Science,
University of Waterloo,
Mathematics 3 Building,
200 University Avenue West
Waterloo, Ontario N2L 3G1,
Canada.
\printead{e1}}
\address[b]{
Department of Mathematical Sciences,
University of Cincinnati,
2815 Commons Way, ML--0025,
Cincinnati, OH, 45221-0025, USA.
\printead{e2}}

\runauthor{Shen and Wang}

\affiliation{University of Waterloo and University of Cincinnati}

\end{aug}

\begin{abstract}\
We propose an aggregated random-field model, and investigate the scaling limits of the aggregated partial-sum random fields. In this model, each copy in the aggregation is a $\pm1$-valued random field built from two correlated one-dimensional random walks, the law of each determined by a random persistence parameter. A flexible joint distribution of the two parameters is introduced, and given the parameters the two correlated random walks are conditionally independent. For the aggregated random field, when the persistence parameters are independent, the scaling limit is a fractional Brownian sheet. When the persistence parameters are
tail-dependent, characterized in the framework of multivariate regular variation,
 the scaling limit is more delicate, and in particular depends on the growth rates of the underlying rectangular region along two directions: at different rates different operator-scaling Gaussian random fields appear as the region area tends to infinity. In particular, at the so-called critical speed,  a large family of Gaussian random fields with long-range dependence arise in the limit. We also identify four different regimes at non-critical speed where fractional Brownian sheets arise in the limit.
\end{abstract}

\begin{keyword}
\kwd{}
\kwd{Gaussian random field}
\kwd{fractional Brownian sheet}
\kwd{operator-scaling property}
\kwd{functional central limit theorem}
\kwd{long-range dependence}
\kwd{aggregation}
\end{keyword}



\end{frontmatter}

\section{Introduction and main results}
Long-range dependence phenomena are well known in various areas of applications, including notably econometrics, finance, and network traffic modeling. It is also referred to as long memory, particularly in time-series setup. Traditionally, a stationary stochastic process with finite second moment is considered to have long-range dependence if, roughly speaking, either the covariance function has the power law decay, or the spectral density has a singularity at the origin. The two approaches are referred to in the literature as the 
time-domain
 approach and the 
frequency-domain
 approach, respectively. Recently, interpretations of long-range dependence in terms of limit theorems have become more and more popular: a stochastic model of interest may be viewed to have long-range dependence if,
for example,
when compared to a similar model with short-range dependence, the normalization in certain limit theorem for partial sums   is of a different order. The other model in comparison here may be the same model but with a different choice of parameter, or a much simplified model for which the short-range dependence has been well understood. The anomalous normalization already indicates the qualitatively different behavior of the model.
    Moreover, a functional limit theorem provides a 
    more
     precise description of the macroscopic dependence, in terms of the limit process, and new families of stochastic processes have been discovered in this way.
  At the same time,
  limit theorems also provide insightful explanation on how long-range dependence appears, and the limit process,    due to the intriguing dependence structure inherited from the discrete model, may be of independent interest for further investigation.  Excellent references on long-range dependence in stochastic processes and applications include for example \citep{pipiras17long,beran13long,samorodnitsky16stochastic}.

In the investigation of long-range dependence, two classes of models have prominent roles: models via aggregation and models via filtration (often in the form of fractionally integrated processes or random fields). We shall focus on aggregated models in this paper. One of the most famous aggregated models is due to \citet{robinson78statistical} and \citet{granger80long}, who showed that the aggregation of autoregressive processes with random parameters may lead to long memory. In particular, this model has received huge success in explaining the long memory phenomena in many economics and financial data sets in the econometrics literature. Another area where aggregated models have been extensively investigated is modeling long memory in network traffic.  See for example \citep{mikosch02network,mikosch07scaling,kaj08convergence}.

In the spatial setup, however, aggregated random fields have been much less developed than their one-dimensional counterparts. See for example \citet{lavancier06long,lavancier11aggregation} and references therein.
In particular, we are interested in aggregated spatial models of which, if scaled appropriately, the limit random fields are {\em operator-scaling} Gaussian random fields.
We say a random field $\{\G_\vvt\}_{\vvt\ge \vv0}$ is operator-scaling, if for some $\beta_1,\beta_2,H>0$ we have
\equh\label{eq:OS}
\ccbb{\G_{\lambda^{\beta_1}t_1,\lambda^{\beta_2}t_2}}_{\vvt\ge \vv0} \eqd \lambda^H\ccbb{\G_\vvt}_{\vvt\ge \vv0}, \mfa \lambda>0.
\eque
This is actually a special case of the operator-scaling property introduced by \citet{bierme07operator}. This property is  an extension of the 
well-known
 self-similar property for one-dimensional stochastic processes.
Most operator-scaling random fields are anisotropic in the sense that they have different scaling properties in different directions, a very desirable property from modeling point of view.
At the same time, this property also makes the analysis of such Gaussian random fields very challenging, and they have attracted much research interest since its introduction.
See for example
\citep{xiao09sample,xiao13recent,meerschaert13fernique,li15exact} for recent developments on path properties of operator-scaling Gaussian random fields.
Most operator-scaling random fields, as their one-dimensional counterparts, exhibit long-range dependence. Gaussian random fields with long-range dependence are known to have applications in medical image processing \citep{bierme09anisotropic,lopes09fractal} and hydrology \citep{benson06aquifer,meerschaert13hydraulic}. Econometric interpretation for aggregated models has also been discussed in the literature \citep{leonenko13disaggregation}.
In terms of limit theorems,
not many models that scale to anisotropic operator-scaling Gaussian random fields have been known, including notably \citep{lavancier07invariance,bierme14invariance,wang14invariance,puplinskaite15scaling,puplinskaite16aggregation,pilipauskaite17scaling,bierme17invariance,lavancier11aggregation,durieu19random}. Among these, only \citet{lavancier11aggregation} and \citet{puplinskaite16aggregation} considered certain aggregated random fields (not strictly in our sense though, see Remark \ref{rem:double_limit}), while only \citet{puplinskaite16aggregation} considered anisotropic aggregated ones.

In this paper, we propose a new aggregated random-field model that scales to a large family of anisotropic operator-scaling Gaussian random fields.  Our model may be viewed as an extension of the approximation of fractional Brownian motions by aggregated random walks introduced by \citet{enriquez04simple}. In fact, \citet{enriquez04simple} proposed two different models for approximation of fractional Brownian motions with Hurst index $H>1/2$ and $H<1/2$ respectively, and our extension is based on the one for $H>1/2$ here. In this case, the Enriquez model can be viewed as an aggregation of independent copies of correlated random walks, where the law of each correlated random walk is completely determined by a random persistence parameter.
 We will investigate  in another paper the extension of the other Enriquez model (for $H<1/2$), which is of a different nature.

 In particular, our model inherited a prominent feature from the one-dimensional model that, in the aggregated model, each independent copy of the random field (or stochastic process in one dimension) takes only $\pm1$-values. It is appealing to restrict the values of model to $\pm1$ from numerical simulation point of view. It also provides better insight on the dependence structure. Besides \citep{enriquez04simple}, a few recent limit theorems for $\pm1$-valued discrete models with long-range dependence include \citep{hammond13power,durieu16infinite,bierme17invariance,durieu19random}.

The extension to random fields, however, is by no means straightforward. For each random field in the aggregation we are now searching for $\pm1$-valued models with non-trivial anisotropic dependence. The key idea is to consider two independent one-dimensional random walks as in the Enriquez model, and define the random field as the product
 of the two sequences of $\pm1$-valued steps of each; the dependence of the so-obtained random field is then determined by assigning an 
 appropriate tail-dependence structure
 of the two persistence parameters (and keeping the random walks conditionally independent). Our 
 modeling of the 
 tail dependence is flexible, so that a large family of random fields arise in the limit, and also computable, so that we have explicit form of the asymptotic covariance of the limit Gaussian field, which is in general much more complex than in one dimension.

Below, we first review the Enriquez model in dimension one, and then introduce our generalization. The main results are then presented in Section~\ref{sec:main}.
\subsection{Enriquez model in dimension one}
\citet{enriquez04simple} proposed two aggregated models that scale to  fractional Brownian motions, with Hurst index $H\in(1/2,1)$ and $H\in(0,1/2)$ respectively. We shall focus exclusively on the first one and its generalization to random fields, and we refer to this one as the Enriquez model in this paper, for the sake of simplicity.

The Enriquez model consists of aggregation of a family of independent  $\{\pm1\}$-valued stationary sequences, with a parameter $H\in(1/2,1)$. The model is as follows.
First, a random variable $q$ is sampled from the probability distribution $\mu_H$ on $(0,1)$ defined as
\equh\label{eq:mu_H}
\mu_H(dq) = (1-H)2^{3-2H}(1-q)^{1-2H}\indd{q\in(1/2,1)}dq.
\eque
 For the sake of convenience, with a slight abuse of notation we let $q$ denote both a random variable in general and the variable in the density formula. Then, a sequence of random variables $\indn\varepsilon$ is sampled iteratively: $\varepsilon_1$ is a $\{\pm1\}$-valued symmetric random variable, and for each $n\in\N$  given the past and $q$, $\varepsilon_{n+1}$ is set to take the same value of $\varepsilon_n$ with probability $q$, and the opposite with probability $1-q$. The law of the so-sampled sequence $\indn\varepsilon$ is determined by, given
$q$ and $\varepsilon_1$,
\equh\label{eq:enriquez}
\proba(\varepsilon_{n+1} = 1\mid \varepsilon_1,\dots,\varepsilon_n,q) = q\indd{\varepsilon_n = 1} + (1-q)\indd{\varepsilon_n = -1}, n\in\N.
\eque
Let $S_n:=\varepsilon_1+\cdots+\varepsilon_n$ denote the partial sum of the stationary sequence.
For each $q$ fixed, the sequence $\indn S$ can be viewed as a correlated $\{\pm1\}$-valued random walk, and $q$ is referred to as the persistence of the random walk. The partial-sum process $\{S_n(t)\}_{t\in[0,1]}$ of this sequence is denoted by
\[
S_n(t):=\summ j1{\floor{nt}}\varepsilon_j, t\in[0,1], n\in\N.
\]
Next, consider  i.i.d.~copies of the stationary sequence $\varepsilon$, each copy denoted by $\varepsilon^i\equiv\{\varepsilon^i_n\}_{n\in\N}$. Let $\{S_n^i(t)\}_{t\in[0,1]}$ denote the partial-sum processes of the $i$-th sequence.  Let $\{m(n)\}_{n\in\N}$ denote a increasing sequence of integers, and   $\what S_n(t)$ denote the aggregated partial-sum process of $m(n)$ i.i.d.~sequences
\[
\what S_n(t) := \summ i1{m(n)}S_n^i(t) =  \summ i1{m(n)}\summ j1{\floor{nt}}\varepsilon_j^i.
\]
\citet[Corollary 1]{enriquez04simple} proved that if $\limn m(n)/n^{2-2H} = \infty$, then
\[
\ccbb{\frac{\what S_{n}(t)}{n^H}}_{t\in[0,1]}\weakto \sqrt{\frac{\Gamma(3-2H)}{H(2H-1)}}\ccbb{\B^H_t}_{t\in[0,1]}
\]
in $D([0,1])$, where $\B^H$ is the fractional Brownian motion, a centered Gaussian process with covariance function
\[
\cov\pp{\B^{H}_s,\B^{H}_t} = \frac{1}{2}(s^{2H}+t^{2H}-|s-t|^{2H}), s, t\geq 0.
\]

\subsection{An aggregated random-field model}
We consider the following generalization of the Enriquez model,  consisting of independent copies of a $\{\pm1\}$-valued stationary random field $\{X_\vvn\}_{\vvn\in\N^2}$. For each copy, a random vector 
$\vv q = (q_1,q_2)$
 is first sampled from a certain distribution $\mu$ on $[1/2,1)^2$ to be described later.
Next, given $q_1,q_2\in[1/2,1)$,  let $\varepsilon\topp k \equiv\indn{\varepsilon\topp k}, k=1,2$ be two conditionally independent
one-dimensional random walks with persistence $q_1$ and $q_2$ respectively as in the original Enriquez model (each starting with $\proba(\varepsilon_1 \topp k = \pm1) = 1/2$ and following the dynamics determined by~\eqref{eq:enriquez}).
 Then, consider the stationary random field
\[
X_\vvj := \varepsilon_{j_1}\topp1\varepsilon_{j_2}\topp2, \vvj\in\N^2.
\]
The stationarity of $X$ is easy to verify, regardless of the choice of $\mu$. Let
\[
S_\vvn(\vvt):=\sum_{\vvj\in[\vv1,\vvn\cdot\vvt]}X_\vvj
\]
denote the partial sum of the random field.
Here and below, $\vv n\cdot\vv t = (n_1t_1,n_2t_2)\in\R^2$ and $[\vv a,\vv b]$ is understood as
\[
[\vv a,\vv b] \equiv ([a_1,b_1]\times[a_2,b_2])\cap \Z^2, \vv a,\vv b\in\R^2
\] throughout the paper.

Next,  let $\{X^i\}_{i\in\N}$ be i.i.d.~copies of $X$, and define  $S_\vvn^i(\vvt)$ similarly as $S_\vvn(\vvt)$. We then consider the aggregated partial-sum random field $\{\what S_\vvn(\vvt)\}_{\vvt\in[0,1]^2}$ given by
\[
\what S_\vvn(\vvt):= \summ i1{m(\vvn)}S_\vvn^i(\vvt)\equiv \summ i1{m(\vvn)}\sum_{\vvj\in[\vv1,\vvn\cdot\vvt]}X_\vvj^{i},\vvt\in[0,1]^2,
\]
with $m(\vvn)\in\N$, the number of copies in the aggregation, to be chosen later.

Now we explain our choices of $\mu$, the law of 
$\vv q = (q_1,q_2)$.
 Recall that this is a probability measure on $[1/2,1)^2$. We consider two cases of the model with drastically different behaviors:
\begin{enumerate}[(i)]
\item {\em independent persistence}, where we  assume that $q_1$ and $q_2$ are independent and with law $\mu_{H_1}$ and $\mu_{H_2}$, respectively. That is, $\mu = \mu_{H_1}\otimes \mu_{H_2}$.
This is the easiest case of our limit theorems.

\item {\em dependent persistence}, where we assume that $q_1$ and $q_2$ are {\em tail-dependent}
 in the specific way described below. This is the case to which most of our effort is devoted.
\end{enumerate}
In the case of dependent persistence, we introduce a specific and
 flexible model to characterize the tail dependence of $\vv q$ near $(1,1)$ as follows, which satisfies the
multivariate regular variation assumption  (see Remark \ref{rem:MRV} below).
To start with, and for the convenience of analysis later, we construct a random vector $\vv U\in(0,1]^2$ with law $\mu^*$, and set $\mu$ as its induced measure  on $[1/2,1)^2$ by
\[
\vv q = (1,1)-\frac{\vv U}{2}.
\]
To allow flexible and analytically tractable dependence between $U_1$ and $U_2$, let $R$ be a positive continuous random variable with probability density $r^{-2}dr$ over $(1,\infty)$, and $\vv W =(W_1,W_2)$ be a random vector taking values in
\[
\Delta_1 :=\{\vv w\in(0,1)^2:w_1+w_2=1\}
\] with law $\Lambda$. We assume that $R$ and $\vv W$ are independent, and let $\alpha_1, \alpha_2$ be two constants in $(0,2)$. Then introduce
\equh\label{eq:wtU}
\wt U_k:=(RW_k)^{-1/\alpha_k}, k=1,2,
\eque
and set
\equh\label{eq:U}
\vv U := \left\{
\begin{array}{ll}
(\wt U_1,\wt U_2)  & (\wt U_1,\wt U_2)\in(0,1)^2\\
(1,1) & \mbox{ otherwise}
\end{array}
\right.
\eque
to address the practical issue that $U_k$ by definition should be in $[0,1]$.
In this way, our aggregated random-field model with dependence parameters is completely determined by $\vv\alpha=(\alpha_1, \alpha_2)$ and $\Lambda$.
In the 
dependent persistence case, all these parameters have impact on the limit random fields
(see \eqref{eq:cov_G} below).
\begin{Rem}\label{rem:MRV}
It is natural to work in the framework of multivariate regular variation for $\vv q$, as it is clear that for non-trivial dependence structure, only the behavior of $\vv q$ near $(1,1)$ matters: as an extension of the one-dimensional model we need each $q_i$ to have power-law density near 1, and the new ingredient in two-dimensional modeling is to characterize the dependence of $\vv q$ at the tail $(1,1)$, a standard question in extreme value theory.
However, for modeling the tail dependence, traditionally in extreme value theory and also in our application, it is more convenient to work with multivariate regular variation assumption at either $(\infty,\infty)$ or $(0,0)$ \citep{resnick07heavy}.

More precisely, for our application the tail dependence of $\vv Z =R\vv W$ at $(\infty,\infty)$ plays a crucial role in the limit (see \eqref{eq:RW} and \eqref{eq:hnK}), which we model in the framework of multivariate regular variation in {\em polar coordinate}. A general assumption in this case should read as
\equh\label{eq:MRV}
n\proba\pp{\pp{\frac {\nn{\vv Z}}n, \frac{\vv Z}{\nn{\vv Z}}}\in\cdot}\stackrel{v}\rightarrow \frac{dr}{r^2}\times \Lambda(\cdot),
\eque
$\nn {\vv Z} := |Z_1|+|Z_2|$,
in the space of positive Radon measures on $[0,\infty]^2\setminus\{\vv0\}$ equipped with the vague topology, where $\stackrel v\rightarrow$ stands for the vague convergence, and $\Lambda$ is known as the {\em angular measure} that characterizes the tail dependence.
Our construction of $R\vv W$ in 
\eqref{eq:wtU}
is a well known procedure that implies \eqref{eq:MRV} \citep[Section 6.5.3]{resnick07heavy}.
The advantage of working with $R\vv W$ directly instead of the weaker assumption \eqref{eq:MRV} is to be able to obtain specific bounds quickly at various places, as the analysis is already quite involved.
\end{Rem}

\subsection{Main results}\label{sec:main}
Our main results are functional limit theorems on $\what S_\vvn(\vvt)$.
We first begin with the model with independent persistence.

\begin{Thm}\label{thm:1}
Consider the aggregated model with independent persistence ($\mu = \mu_{H_1}\otimes\mu_{H_2}$, $\mu_H$ as in \eqref{eq:mu_H} and $H_1,H_2\in(1/2,1)$). Assume also
\equh\label{eq:m(n)}
\lim_{\vvn\to\infty}\frac{n_1^{2-2H_1}n_2^{2-2H_2}}{m(\vvn)} = 0.
\eque
Then,
\[
\frac1{n_1^{H_1}n_2^{H_2}\sqrt {m(\vvn)}}\ccbb{\what S_\vvn(\vvt)}_{\vvt\in[0,1]^2}\weakto \sigma\ccbb{\B^{\vvH}_\vvt}_{\vvt\in[0,1]^2},
\]
in $D([0,1]^2)$ as $\vvn\to\infty$, where $\B^{\vvH}$ is a standard fractional Brownian sheet with covariance function
\[
\cov\pp{\B^{\vvH}_\vvs,\B^{\vvH}_\vvt} = \prodd k12\frac12\pp{s_k^{2H_k}+t_k^{2H_k}-|s_k-t_k|^{2H_k}}, \vvs,\vvt\ge 0,
\]
and
\[
\sigma := \prodd k12\pp{\frac{\Gamma(3-2H_k)}{H_k(2H_k-1)}}^{1/2}.
\]
\end{Thm}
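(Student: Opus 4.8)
The plan is to exploit the product structure forced by the independent-persistence assumption. Since $q_1$ and $q_2$ are independent, the pair $(\varepsilon\topp1,q_1)$ is independent of $(\varepsilon\topp2,q_2)$, so for a single copy the partial sum factorizes over the rectangle,
\[
S_\vvn(\vvt) = \pp{\summ{j_1}1{\floor{n_1t_1}}\varepsilon_{j_1}\topp1}\pp{\summ{j_2}1{\floor{n_2t_2}}\varepsilon_{j_2}\topp2} =: A_{n_1}(t_1)\,B_{n_2}(t_2),
\]
with $A_{n_1}(t_1)$ and $B_{n_2}(t_2)$ \emph{independent} one-dimensional Enriquez partial sums (this independence is exactly why the present case is the easy one). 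Everything then reduces to two inputs about the one-dimensional model, both read off from the two-state Markov identity $\E[\varepsilon_i\topp k\varepsilon_j\topp k\mid q_k]=(2q_k-1)^{|i-j|}$ and the density \eqref{eq:mu_H}. First, the covariance asymptotics $n^{-2H}\cov(A_n(s),A_n(t))\to\sigma_H^2\cdot\frac12(s^{2H}+t^{2H}-|s-t|^{2H})$ with $\sigma_H^2=\Gamma(3-2H)/(H(2H-1))$, which I would obtain from $\gamma_m:=\E[(2q-1)^m]=2(1-H)B(m+1,2-2H)\sim cm^{2H-2}$ and a Riemann-sum computation of $\var(A_n)=\sum_{i,j}\gamma_{|i-j|}$. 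Second, the \emph{sharp} fourth-moment bound $\E[A_n(t)^4]\le C(nt)^{2H+2}$, which I would derive from the four-point identity $\E[\varepsilon_{i_1}\varepsilon_{i_2}\varepsilon_{i_3}\varepsilon_{i_4}]=\gamma_{(i_2-i_1)+(i_4-i_3)}$ for sorted indices $i_1\le i_2\le i_3\le i_4$, summed against $\gamma_p\sim cp^{2H-2}$.

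For finite-dimensional convergence I would write $\what S_\vvn(\vvt)=\summ i1{m(\vvn)}A_{n_1}^i(t_1)B_{n_2}^i(t_2)$ as a normalized sum of i.i.d.\ centered terms, fix points $\vvt^1,\dots,\vvt^p$, and apply the Cram\'er--Wold device to $\summ l1p a_l\what S_\vvn(\vvt^l)$, which is again a normalized i.i.d.\ sum of centered summands $Z_i:=\summ l1p a_l A_{n_1}^i(t_1^l)B_{n_2}^i(t_2^l)$. Because $|X_\vvj^i|=1$ we have $|Z_i|\le(\summ l1p|a_l|)n_1n_2$, so after dividing by $n_1^{H_1}n_2^{H_2}\sqrt{m(\vvn)}$ the summands are uniformly bounded by $(\sum|a_l|)n_1^{1-H_1}n_2^{1-H_2}/\sqrt{m(\vvn)}\to0$ under \eqref{eq:m(n)}; hence the Lindeberg condition holds trivially (the exceptional set is eventually empty). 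The limiting variance follows from $\cov(S_\vvn(\vvs),S_\vvn(\vvt))=\cov(A_{n_1}(s_1),A_{n_1}(t_1))\,\cov(B_{n_2}(s_2),B_{n_2}(t_2))$ and the covariance asymptotics above, which produce exactly $\sigma^2\cov(\B^{\vvH}_\vvs,\B^{\vvH}_\vvt)$; the Lindeberg--Feller theorem then gives the Gaussian limit for every linear combination.

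For tightness in $D([0,1]^2)$ I would bound the fourth moment of the increment of $\what S_\vvn$ over a rectangle $R=(s_1,t_1]\times(s_2,t_2]$. Factorization gives $\Delta_R\what S_\vvn=\summ i1{m}\Delta A^i\Delta B^i$ with $\Delta A^i,\Delta B^i$ independent centered one-dimensional increments whose moments, by stationarity, equal those of $A_{n_1}(\Delta t_1),B_{n_2}(\Delta t_2)$. Expanding the i.i.d.\ sum, $\E[(\Delta_R\what S_\vvn)^4]=m\,\E[(\Delta A)^4]\E[(\Delta B)^4]+3m(m-1)(\E[(\Delta A)^2]\E[(\Delta B)^2])^2$. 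After dividing by $(n_1^{H_1}n_2^{H_2}\sqrt m)^4$ the second term tends to $3\sigma^4(\Delta t_1)^{4H_1}(\Delta t_2)^{4H_2}$, while the first term is precisely where \eqref{eq:m(n)} enters: using $\E[(\Delta A)^4]\le C(n_1\Delta t_1)^{2H_1+2}$ (and similarly for $\Delta B$) it is bounded by $C\,\frac{n_1^{2-2H_1}n_2^{2-2H_2}}{m(\vvn)}(\Delta t_1)^{2H_1+2}(\Delta t_2)^{2H_2+2}\to0$. Since $\Delta t_k\le1$ and $2H_k+2\ge4H_k$, both terms are dominated by $C(\Delta t_1)^{4H_1}(\Delta t_2)^{4H_2}\le C(\Delta t_1\Delta t_2)^{1+\delta}$ with $\delta=\min(4H_1,4H_2)-1>1$, so the Bickel--Wichura moment criterion applies (the lower-edge conditions being handled by the same estimate on degenerate rectangles). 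Combined with the finite-dimensional convergence and the continuity of the limiting sheet, this yields convergence in $D([0,1]^2)$.

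The step I expect to be the main obstacle is the sharp single-copy bound $\E[A_n(t)^4]\asymp(nt)^{2H+2}$: a single Enriquez copy is far from Gaussian, its fourth moment being of order $n^{2H+2}\gg n^{4H}=(\E A_n^2)^2$, and it is exactly this exponent, matched against \eqref{eq:m(n)}, that makes the diagonal term in the tightness estimate negligible. A cruder bound such as $\E[A_n^4]\le(nt)^4$ would force the far stronger and unwanted requirement $m(\vvn)\gg n_1^{4-4H_1}n_2^{4-4H_2}$, so obtaining the correct fourth-moment scaling is the technical heart of the argument.
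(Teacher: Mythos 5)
Your proposal is correct and follows the same overall architecture as the paper's proof: factorize $S_\vvn(\vvt)=S\topp1_{\sfloor{n_1t_1}}S\topp2_{\sfloor{n_2t_2}}$ into two independent one-dimensional Enriquez partial sums, import the one-dimensional covariance asymptotics and moment bounds $\E S\topp k_{n_k}(t_k)^r=O((n_kt_k)^{r+2H_k-2})$ (which the paper simply cites from \citep{enriquez04simple} rather than rederiving), prove finite-dimensional convergence by Lindeberg--Feller, and obtain tightness from the Bickel--Wichura criterion. Two local choices differ, both in your favor or neutral. First, for the Lindeberg condition the paper applies Markov's inequality to the fourth moment of $Y_\vvn$ and gets the bound $C\eta^{-1}n_1^{2-2H_1}n_2^{2-2H_2}/m(\vvn)$; your observation that $|Y_\vvn|\le(\sum_w|a_w|)\,n_1^{1-H_1}n_2^{1-H_2}$ deterministically, so the truncation event is eventually empty under \eqref{eq:m(n)}, reaches the same conclusion more cheaply by exploiting the $\pm1$-valued structure. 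Second, for tightness the paper uses only the second moment: since $\E\what S_\vvn(\vvt)^2=m(\vvn)\E S_\vvn(\vvt)^2\le Cm(\vvn)(n_1t_1)^{2H_1}(n_2t_2)^{2H_2}$ and $2H_k>1$, the criterion \eqref{eq:Bickel} holds with $p=1$, so the fourth moment is never needed for tightness at all. Your fourth-moment increment bound is also valid (and the diagonal term is indeed where \eqref{eq:m(n)} re-enters), but it is more work than necessary. Consequently your closing remark that the sharp bound $\E A_n(t)^4\le C(nt)^{2H+2}$ is the technical heart of the argument overstates its role for this particular theorem: combining your own boundedness argument for Lindeberg with the paper's second-moment tightness, one can dispense with the fourth moment entirely in the independent-persistence case; it only becomes essential in the dependent-persistence regimes, where the paper must establish it by Fourier methods.
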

Here and below, more precisely, we actually consider a sequence of vectors $\{\vvn(j)\}_{j\in\N}$ in $\N^2$ and the limit as $j\to\infty$. It is always assumed that $\lim_{j\to\infty}n_1(j) = \infty$ and $\lim_{j\to\infty}n_2(j) = \infty$, so that the partial sum is over a rectangular region of which the lengths of both directions tend to infinity.  For the sake of simplicity, throughout we drop the parameter $j$ and write $\vvn\to\infty$ instead of $j\to\infty$.
We will also write $a(\vvn)\sim b(\vvn)$ as $\vvn\to\infty$ if $\lim_{\vvn\to\infty}a(\vvn)/b(\vvn) = 1$.

For the model with dependent persistence, it turns out that the scaling limit depends on the relative growth rate of $n_1$ and $n_2$.
We first look at partial sums over rectangles increasing at the so-called {\em critical speed}:
\equh\label{eq:critical}
n_1^{\alpha_1}\sim n_2^{\alpha_2} \mmas \vvn\to\infty.
\eque
The following function
\equh\label{eq:Psi}
\Psi_{\vv\alpha,\Lambda}(\vv\theta):=\int_0^\infty\int_{\Delta_1}\prodd k12 \frac{2(rw_k)^{-1/\alpha_k}}{(rw_k)^{-2/\alpha_k}+\theta_k^2}\Lambda(d\vvw)r^{-2}dr
\eque
shows up in the harmonizable representation of the limit Gaussian random field.
The finiteness of $\Psi_{\vv\alpha,\Lambda}$ will be established in~\eqref{eq:hnK} below.

\begin{Thm}\label{thm:2}
Consider the aggregated model with dependent persistence and $\alpha_1,\alpha_2\in(0,2)$. If
\equh\label{eq:m(n)_I_AD}
\lim_{\vvn\to\infty}\frac{n_1^{\alpha_1}}{m(\vvn)} = 0,
\eque
then, at critical speed \eqref{eq:critical},
\[
\frac{n^{\alpha_1/2}}{|\vvn|\sqrt {m(\vvn)}}\ccbb{\what S_\vvn(\vvt)}_{\vvt\in[0,1]^2}\weakto \ccbb{\G^{\vv\alpha,\Lambda}_\vvt}_{\vvt\in[0,1]^2},
\]
in $D([0,1]^2)$ as $\vvn\to\infty$, where $|\vvn|=n_1n_2$, and $\G^{\vv\alpha,\Lambda}$ is a centered Gaussian random field with
\equh\label{eq:cov_G}
\cov(\G_\vvs^{\vv\alpha,\Lambda},\G_\vvt^{\vv\alpha,\Lambda}) = \frac1{(2\pi)^2}\int_{\R^2}\pp{\prodd k12\frac{(e^{is_k\theta_k}-1)\wb{(e^{it_k\theta_k}-1)}}{|\theta_k|^2}}\Psi_{\vv\alpha,\Lambda}(\vv\theta)d\vv\theta.
\eque
\end{Thm}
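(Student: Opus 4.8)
The plan is to prove convergence of the finite-dimensional distributions together with tightness in $D([0,1]^2)$. The starting observation is that the product structure $X_\vvj = \varepsilon\topp1_{j_1}\varepsilon\topp2_{j_2}$ makes every rectangular partial sum factorize: writing $S\topp k_{n_k}(t):=\summ j1{\floor{n_k t}}\varepsilon\topp k_j$, one has $S_\vvn(\vvt) = S\topp1_{n_1}(t_1)\,S\topp2_{n_2}(t_2)$, and more generally the increment of $S_\vvn$ over a rectangle factorizes into the product of the two one-dimensional increments. Since $\what S_\vvn = \summ i1{m(\vvn)} S_\vvn^i$ is a sum of $m(\vvn)$ i.i.d.\ mean-zero copies, I would reduce the finite-dimensional convergence to a Lyapunov central limit theorem for i.i.d.\ triangular arrays: fixing test points $\vvt_1,\dots,\vvt_p$ and coefficients $a_1,\dots,a_p$, the scalar $\summ l1p a_l\,[\text{normalized }\what S_\vvn(\vvt_l)]$ is an i.i.d.\ sum whose normalization absorbs $\sqrt{m(\vvn)}$, so that its variance is asymptotically independent of $m(\vvn)$. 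The role of $m(\vvn)\to\infty$, enforced by \eqref{eq:m(n)_I_AD}, is precisely to kill the Lyapunov ratio, as each normalized single summand contributes a factor $m(\vvn)^{-\delta/2}$ once the $(2+\delta)$-moment of $S_\vvn$ is controlled polynomially in $\vvn$.

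The main analytical step is the covariance asymptotics. Conditioning on $\vv q=(q_1,q_2)$ and using conditional independence together with the geometric correlation $\esp[\varepsilon\topp k_i\varepsilon\topp k_j\mid q_k]=(1-U_k)^{|i-j|}$ (recall $q_k=1-U_k/2$, so $2q_k-1=1-U_k$), the single-copy cross moment factorizes as
\[
\esp\bb{S_\vvn(\vvs)S_\vvn(\vvt)} = \int \prodd k12 \sigma_{n_k}(s_k,t_k;U_k)\,\mu^*(d\vv U), \quad \sigma_{n_k}(s,t;u) = \summ i1{\floor{n_k s}}\summ j1{\floor{n_k t}}(1-u)^{|i-j|}.
\]
I would then pass to the frequency domain, expressing $\sigma_{n_k}$ through the spectral density $f_u(\theta)=(1-(1-u)^2)/|1-(1-u)e^{i\theta}|^2$ of the one-dimensional correlated walk together with the Fourier transform of the window $[1,\floor{n_k t}]$. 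Rescaling $\theta\mapsto\theta/n_k$, the window transforms converge to $(e^{it_k\theta_k}-1)/(i\theta_k)$, producing the kernels $\frac{(e^{is_k\theta_k}-1)\wb{(e^{it_k\theta_k}-1)}}{|\theta_k|^2}$, while, after the change of variables $\vv U\leftrightarrow(R,\vv W)$ of \eqref{eq:wtU}, the rescaled spectral densities converge to $\frac{2(rw_k)^{-1/\alpha_k}}{(rw_k)^{-2/\alpha_k}+\theta_k^2}$. The critical-speed relation \eqref{eq:critical} is exactly what forces both directions to scale at a common rate, so that the radial variable $R$ (with density $r^{-2}dr$) and the angular measure $\Lambda$ decouple as in \eqref{eq:Psi}; combined with the prefactor $n^{\alpha_1}/|\vvn|^2$ coming from squaring the normalization and cancelling $m(\vvn)$, this reproduces \eqref{eq:cov_G}. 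Finiteness of $\Psi_{\vv\alpha,\Lambda}$, to be established in \eqref{eq:hnK}, certifies a finite limit, and the explicit $R\vv W$ construction (rather than the bare multivariate regular variation assumption of Remark \ref{rem:MRV}) furnishes the dominating bounds needed to interchange the $\vv U$-average with the scaling limit.

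For tightness in $D([0,1]^2)$ I would verify a fourth-moment bound on normalized rectangular increments of Bickel--Wichura type. Because rectangular increments factorize into products of one-dimensional increments and the copies are i.i.d.\ and mean-zero, the fourth moment of an aggregated increment expands into the sum of $m(\vvn)$ single fourth moments and $3m(\vvn)(m(\vvn)-1)$ products of pairs of second moments; after conditioning on $\vv q$ each of these factorizes further into products of conditional second and fourth moments of the two one-dimensional walks, which are then integrated against $\mu^*$. These reduce to one-dimensional increment estimates that can be bounded uniformly in $\vvn$ by the same quantities governing the covariance, the dominant pairing term matching the square of the variance as expected for a Gaussian limit. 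The principal obstacle throughout is the uniform control of $\sigma_{n_k}(\cdot;U_k)$ over the entire range of $U_k$---from the short-range bulk $U_k\approx1$ up to the heavy-tailed regime $U_k\to0$ carrying the long-range dependence---together with the justification of exchanging the $\vv U$-average with the $\vvn\to\infty$ limit; this is exactly where the concrete polar construction of $\vv q$ is exploited to obtain quantitative bounds.
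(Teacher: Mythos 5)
Your proposal is correct and follows essentially the same route as the paper: a Lindeberg/Lyapunov CLT for the i.i.d.\ triangular array (with condition \eqref{eq:m(n)_I_AD} killing the normalized fourth-moment ratio $n^*/m(\vvn)$), the covariance asymptotics computed in the frequency domain from the spectral density $u(2-u)/(u^2+2(1-u)(1-\cos\theta))$ of each correlated walk combined with the polar change of variables $\vv U\leftrightarrow(R,\vv W)$ to produce $\Psi_{\vv\alpha,\Lambda}$, and a Bickel--Wichura moment criterion for tightness. The only cosmetic differences are that the paper verifies tightness with the second moment ($p=1$, exponent $2-1/p(\vv\alpha)>1$) rather than your fourth-moment expansion, and it organizes the uniform-in-$\vv U$ control by splitting the polar integral at $n^*r\vvw\in T_{\vv\alpha}((0,\delta]^2)$ versus its complement, exactly the interchange-of-limits issue you identify as the principal obstacle.
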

Next, when $\vvn$ does not grow at the critical speed \eqref{eq:critical}, we identify four different regimes. By symmetry, it suffices to assume
\equh\label{eq:symmetry}
n_1^{\alpha_1}\gg n_2^{\alpha_2},
\eque
by which we mean $\lim_{\vvn\to\infty} n_2^{\alpha_2}/n_1^{\alpha_1} = 0$. Under this assumption the following theorem identifies two regimes of non-critical speed,
and the other two regimes under the assumption $n_1^{\alpha_1}\ll n_2^{\alpha_2}$ can be read accordingly.
In the sequel we write
\[
\mathfrak c_H := B\pp{H-\frac12,\frac32-H}\frac\pi{H\Gamma(2H)\sin(H\pi)},
\]
where $B(a,b) = \int_0^1 x^{a-1}(1-x)^{b-1}dx$ is the Beta function.
\begin{Thm}\label{thm:non_critical}
Consider the aggregated model with dependent persistence and  $\alpha_1,\alpha_2\in(0,2)$. If  \[
\lim_{\vvn\to\infty}\frac{n_1^{2-2H_1}n_2^{2-2H_2}}{m(\vvn)} = 0,
\]
then at non-critical speed
\eqref{eq:symmetry},
\equh\label{eq:sub_regimes}
\frac1{n_1^{H_1}n_2^{H_2}\sqrt{m(\vvn)}}\ccbb{{\what S_\vvn(\vvt)}}_{\vvt\in[0,1]^2} \weakto \sigma\ccbb{\B_\vvt^{\vvH}}_{\vvt\in[0,1]^2}
\eque
in $D([0,1]^2)$ as $\vvn\to\infty$,
where $\B^\vvH$ is the fractional Brownian sheet with Hurst indices $\vvH$, for the following two cases depending on the value of $\alpha_1$. In each case, $\vvH$ and $\sigma^2$ are given accordingly:
\begin{enumerate}[(i)]
\item \label{case:D,beta<1}
$\alpha_1>1$:
\[
\displaystyle H_1 = \frac12, H_2 = 1-\frac{\alpha_2}2\pp{1-\frac1{\alpha_1}},\sigma^2 = 2\alpha_2\mathfrak c_{H_2}\int_{\Delta_1} w_1^{1/\alpha_1}w_2^{1-1/\alpha_1}\Lambda(d\vvw),
\]
\item \label{case:g,beta=1}
$\alpha_1<1$:
\[
\displaystyle H_1 = 1-\frac{\alpha_1}2, H_2 = 1, \sigma^2 = \alpha_1\mathfrak c_{H_1}\int _{\Delta_1}w_1\Lambda(d\vvw).
\]
\end{enumerate}
\end{Thm}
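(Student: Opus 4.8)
The plan is to prove the functional convergence in the two usual steps: convergence of the finite-dimensional distributions and tightness in $D([0,1]^2)$. For the finite-dimensional distributions I would fix points $\vvt^{(1)},\dots,\vvt^{(d)}\in[0,1]^2$ and coefficients $a_1,\dots,a_d$, and apply the Cramér--Wold device to $\sum_{i=1}^{m(\vvn)}Y_i$, where $Y_i:=\sum_{l=1}^d a_l S_\vvn^i(\vvt^{(l)})$ are i.i.d.\ and centered. After normalization by $n_1^{H_1}n_2^{H_2}\sqrt{m(\vvn)}$ this is a row-wise i.i.d.\ triangular array, so a Lyapunov central limit theorem reduces everything to (a) convergence of the single-copy covariance and (b) a Lyapunov moment bound, the limit then being automatically Gaussian. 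The decisive structural fact is that, conditionally on $\vv q=(q_1,q_2)$, the field factorizes as $S_\vvn(\vvt)=S^{(1)}_{n_1}(t_1)\,S^{(2)}_{n_2}(t_2)$ into a product of two conditionally independent one-dimensional correlated random-walk partial sums governed by \eqref{eq:enriquez}. Writing $U_k=2(1-q_k)$ and
\[
g_n(s,t;u):=\sum_{i=1}^{\floor{ns}}\sum_{j=1}^{\floor{nt}}(1-u)^{|i-j|},
\]
conditional independence and $\E[X_\vvj]=0$ give $\cov(S_\vvn(\vvs),S_\vvn(\vvt))=\E[g_{n_1}(s_1,t_1;U_1)\,g_{n_2}(s_2,t_2;U_2)]$.

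The core of the argument is the asymptotic evaluation of this expectation under the tail-dependence model \eqref{eq:wtU}--\eqref{eq:U}. Disregarding for the moment the truncation in \eqref{eq:U}, the covariance equals
\[
\int_1^\infty\!\int_{\Delta_1} g_{n_1}\!\big(s_1,t_1;(rw_1)^{-1/\alpha_1}\big)\,g_{n_2}\!\big(s_2,t_2;(rw_2)^{-1/\alpha_2}\big)\,\Lambda(d\vvw)\,\frac{dr}{r^2}.
\]
The one-dimensional kernel has three regimes, controlled by the product $nu$: short range ($nu\to\infty$), where $g_n(s,t;u)\sim \tfrac2u\,n\min(s,t)$; crossover ($nu\to\lambda$), where $g_n(s,t;u)\sim n^2 K(s,t;\lambda)$ with $K(s,t;\lambda):=\int_0^s\!\int_0^t e^{-\lambda|x-y|}\,dx\,dy$; and full correlation ($nu\to0$), where $g_n(s,t;u)\sim n^2 st$. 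Direction $k$ passes from short range to full correlation as $r$ crosses $n_k^{\alpha_k}$, so under \eqref{eq:symmetry} the two crossover scales $r\sim n_2^{\alpha_2}$ and $r\sim n_1^{\alpha_1}$ are far apart. A direct power count shows that the contributions of these two scales are of orders $n_1 n_2^{\,2-\alpha_2(1-1/\alpha_1)}$ and $n_1^{2-\alpha_1}n_2^2$, whose ratio is $(n_1^{\alpha_1}/n_2^{\alpha_2})^{(\alpha_1-1)/\alpha_1}$; since $n_1^{\alpha_1}/n_2^{\alpha_2}\to\infty$, the sign of the exponent, and hence the dominant scale, flips exactly at $\alpha_1=1$, which is the dichotomy in the statement.

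In case \eqref{case:D,beta<1} ($\alpha_1>1$) the dominant region is $r\sim n_2^{\alpha_2}$, where direction $1$ is short range and direction $2$ is at crossover. The substitution $r=n_2^{\alpha_2}\xi$ leaves the factor $\tfrac2{U_1}n_1\min(s_1,t_1)$ from direction $1$ (giving $H_1=\tfrac12$ and the Brownian covariance $\min(s_1,t_1)$) times $n_2^2\int_0^\infty \xi^{1/\alpha_1-2}K\!\big(s_2,t_2;(\xi w_2)^{-1/\alpha_2}\big)\,d\xi$ from direction $2$. In case \eqref{case:g,beta=1} ($\alpha_1<1$) the dominant region is instead $r\sim n_1^{\alpha_1}$, where direction $1$ is at crossover and direction $2$ is fully correlated, producing $n_2^2 s_2 t_2$ (hence $H_2=1$, i.e.\ degeneracy to a linear profile in $t_2$) times the analogous crossover integral in direction $1$. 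In both cases the substitution $\lambda=(\xi w_k)^{-1/\alpha_k}$ turns the crossover integral into $\int_0^\infty \lambda^{1-2H}K(s,t;\lambda)\,d\lambda$, which evaluates to a multiple of $s^{2H}+t^{2H}-|s-t|^{2H}$; recognizing this multiple through the harmonizable representation of fractional Brownian motion (equivalently, replacing $g_n$ by the spectral density $\tfrac1{2\pi}\frac{2U_k}{U_k^2+\theta^2}$ that appears in \eqref{eq:Psi}--\eqref{eq:cov_G}) identifies the constant $\mathfrak c_H$, while the leftover angular integrals $\int_{\Delta_1}w_1^{1/\alpha_1}w_2^{1-1/\alpha_1}\Lambda(d\vvw)$ and $\int_{\Delta_1}w_1\Lambda(d\vvw)$ furnish the remaining factors in $\sigma^2$. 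This yields the fractional-Brownian-sheet covariance with the stated Hurst indices.

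I expect the main obstacle to be the rigorous justification that only the dominant crossover scale survives: this requires uniform bounds on $g_n(s,t;u)$ valid simultaneously across all three regimes, together with dominated-convergence control of the non-dominant portions of the $r$-integral (the region $r=O(1)$, the complementary tail, and the truncation in \eqref{eq:U}, which only perturbs the short-range part). Convergence of the limiting crossover integral is itself delicate at the endpoints: as $\xi\to0$ and $\xi\to\infty$ the integrand in case \eqref{case:D,beta<1} behaves like $\xi^{1/\alpha_1+1/\alpha_2-2}$ and $\xi^{1/\alpha_1-2}$, so finiteness holds precisely because $\alpha_1\in(1,2)$ and $\alpha_2\in(0,2)$ force $1/\alpha_1+1/\alpha_2>1$ and $1/\alpha_1<1$ (with the symmetric count in case \eqref{case:g,beta=1}); this is exactly where the constraints on $\vv\alpha$ are consumed. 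The remaining ingredients are comparatively routine. The Lyapunov bound follows from the crude estimate $|S_\vvn(\vvt)|\le n_1 n_2$, which gives $\E[Y_i^4]\le C(n_1n_2)^2\var(Y_i)$ and hence a Lyapunov ratio of order $n_1^{2-2H_1}n_2^{2-2H_2}/m(\vvn)\to0$ by hypothesis; and tightness follows from a Bickel--Wichura fourth-moment criterion on rectangular increments, reduced again via the crude bound and the i.i.d.\ aggregation to the increment variances, using $H_1=\tfrac12$ and $H_2>\tfrac12$ in case \eqref{case:D,beta<1} and handling the degenerate, Lipschitz, direction ($H_2=1$) directly in case \eqref{case:g,beta=1}.
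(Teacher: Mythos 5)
Your proposal is correct in substance but takes a genuinely different route from the paper. The paper works entirely in the frequency domain: it writes $\cov(S_\vvn(\vvs),S_\vvn(\vvt))$ via Parseval as an integral of the Dirichlet-type kernel $D_{\vvn,\vvs,\vvt}$ against the spectral density $\what r$, expresses $\what r$ as a mixture over $(r,\vvw)$ of Poisson-kernel factors $g(u,\theta)$, identifies the two regimes by asking which choices of rescaling $\vvn'$ and $n^*$ make the limiting triple integral finite and nonzero, and evaluates the constants through the harmonizable fBm identity and the closed form of $\int_0^\infty r^{-\gamma}((rw)^{-2/\alpha}+\theta^2)^{-1}dr$. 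You instead stay in the time domain, exploiting the conditional factorization $S_\vvn(\vvt)=S^{(1)}_{n_1}(t_1)S^{(2)}_{n_2}(t_2)$ to write the covariance as $\E[g_{n_1}(s_1,t_1;U_1)g_{n_2}(s_2,t_2;U_2)]$ and running a scale-separation (Laplace-type) analysis in the radial variable $r$; your three regimes of $g_n$ and the crossover scales $r\sim n_k^{\alpha_k}$ are exactly the time-domain shadows of the paper's \eqref{eq:limit_D}--\eqref{eq:limit_g}, your power count correctly locates the dichotomy at $\alpha_1=1$, and your endpoint analysis of $\int_0^\infty\lambda^{1-2H}K(s,t;\lambda)\,d\lambda$ consumes the constraints on $\vv\alpha$ in the same places the paper's \eqref{eq:integrate_r} does. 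Note that the paper's authors dismiss ``the time-domain approach'' for two dimensions, but what they dismiss is the Karamata/regular-variation route of Enriquez; your mixture/scale-separation argument is a different time-domain route and does appear viable, with the technical burden (uniform bounds on $g_n$ valid across all three regimes, the near-boundary region of $(r,\vvw)$ where one coordinate of $\vv U$ is of order one even for large $r$ because $w_k$ is small, and the truncation in \eqref{eq:U}) matching the paper's $\delta$-decomposition of $T_{\vv\alpha}((0,1]^2)$ into $\Omega_\vvn^{0,0},\Omega_\vvn^{0,1},\Omega_\vvn^{1,0},\Omega_\vvn^{1,1}$. What your approach buys is a real simplification of the moment estimates: the crude bound $|S_\vvn(\vvt)|\le n_1n_2$ reduces both the Lindeberg--Feller condition and the $p=2$ Bickel--Wichura criterion to the second-moment bound (with $t$-exponents exceeding $1/2$ per coordinate, which the covariance analysis supplies), whereas the paper proves a separate Fourier-analytic fourth-moment estimate via the kernel $J_n$; you correctly observe that the fourth moment cannot be avoided for tightness in case \eqref{case:D,beta<1} since $2H_1=1$ is not $>1$. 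The one place to be careful is the constant: rather than ``recognizing'' $\mathfrak c_H$ through the harmonizable representation, you should evaluate $\int_0^\infty\lambda^{1-2H}e^{-\lambda v}\,d\lambda=\Gamma(2-2H)v^{2H-2}$ and then $\int_0^s\int_0^t|x-y|^{2H-2}\,dx\,dy$ explicitly, and check that the resulting multiple of $\tfrac12(s^{2H}+t^{2H}-|s-t|^{2H})$ agrees with the paper's normalization of $\mathfrak c_H$.
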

In the  regimes of non-critical speed, the limit Gaussian random fields are fractional Brownian sheets that have a direction with degenerate dependence, in the sense that the Hurst index in that direction is either $1/2$ (independent increments) or $1$ (complete dependence).
\begin{Rem}
For the boundary case between the two regimes of non-critical speed in Theorem \ref{thm:non_critical}, namely $n_1^{\alpha_1}\gg n_2^{\alpha_2}$ and $\alpha_1 = 1$, we expect
the following functional central limit theorem to hold
\equh\label{eq:alpha=1}
\frac1{\sqrt{n_1\log n_1}n_2\sqrt{m(\vvn)}} \ccbb{\what S_\vvn(\vvt)}_{\vvt\in[0,1]^2} \weakto \sigma \ccbb{\B_\vvt^\vvH}_{\vvt\in[0,1]^2},
\eque
with $\vvH = (1/2,1)$ and $\sigma^2 = 4\pi\int_{\Delta_1}w_1\Lambda(d\vvw)$. Note that when compared to the two regimes therein, while there is the continuous transition in terms of the Hurst indices $\vvH$, the normalization is inconsistent with the one in \eqref{eq:sub_regimes}, because of the extra logarithmic term. The analysis of this case is the most involved. However, in view of the limit, this is also the least interesting case as the limit random field has degenerate dependence in both directions. Therefore, we only prove
the convergence of covariance function for \eqref{eq:alpha=1}
in the last section of the Supplementary Material.
\end{Rem}

All the random fields in the limit are operator-scaling. For fractional Brownian sheet, it is well known that
\[
\ccbb{\B^\vvH_{\vv\lambda\cdot \vvt}}_{\vvt\ge \vv0}\eqd \lambda_1^{H_1}\lambda_2^{H_2}\ccbb{\B^\vvH_\vvt}_{\vvt\ge\vv 0},
\]
which is actually stronger than the operator-scaling property in \eqref{eq:OS}.
The limit random field $\G^{\vv\alpha,\Lambda}$ in Theorem \ref{thm:2} is also operator-scaling.

\begin{Prop}
For $\sccbb{\G_\vvt^{\vv\alpha,\Lambda}}_{\vvt\ge \vv0}$  in Theorem \ref{thm:2}, we have
$$
\ccbb{\G_{\lambda^{1/\alpha_1}t_1,\lambda^{1/\alpha_2}t_2}^{\vv\alpha,\Lambda}}_{\vvt\ge \vv0} \eqd \lambda^{1/\alpha_1+1/\alpha_2-1/2}\ccbb{\G_\vvt^{\vv\alpha,\Lambda}}_{\vvt\ge \vv0}, \mfa \lambda>0.
$$
\end{Prop}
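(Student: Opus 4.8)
The plan is to use that $\G^{\vv\alpha,\Lambda}$ is a centered Gaussian random field, so that the asserted distributional identity is equivalent to the matching of covariance functions. Writing $c := 1/\alpha_1+1/\alpha_2-1/2$, it therefore suffices to show, for every $\vvs,\vvt\ge\vv0$ and $\lambda>0$,
\equh
\cov\pp{\G_{\lambda^{1/\alpha_1}s_1,\lambda^{1/\alpha_2}s_2}^{\vv\alpha,\Lambda},\G_{\lambda^{1/\alpha_1}t_1,\lambda^{1/\alpha_2}t_2}^{\vv\alpha,\Lambda}} = \lambda^{2c}\,\cov\pp{\G_\vvs^{\vv\alpha,\Lambda},\G_\vvt^{\vv\alpha,\Lambda}},
\eque
working directly from the spectral representation \eqref{eq:cov_G}.

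First I would record a homogeneity identity for the function $\Psi_{\vv\alpha,\Lambda}$ in \eqref{eq:Psi}. Evaluating $\Psi_{\vv\alpha,\Lambda}$ at $(\lambda^{-1/\alpha_1}\theta_1,\lambda^{-1/\alpha_2}\theta_2)$ and substituting $r\mapsto\lambda r$ in the radial integral, each of the two factors has its numerator scaled by $\lambda^{-1/\alpha_k}$ and its denominator by $\lambda^{-2/\alpha_k}$ (so the factor itself scales by $\lambda^{1/\alpha_k}$), while $r^{-2}dr$ contributes an additional $\lambda^{-1}$. This yields
\equh
\Psi_{\vv\alpha,\Lambda}\pp{\lambda^{-1/\alpha_1}\theta_1,\lambda^{-1/\alpha_2}\theta_2} = \lambda^{1/\alpha_1+1/\alpha_2-1}\,\Psi_{\vv\alpha,\Lambda}(\vv\theta).
\eque
The finiteness of $\Psi_{\vv\alpha,\Lambda}$ (see \eqref{eq:hnK}) justifies the use of Fubini's theorem and of the change of variables.

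Next I would insert \eqref{eq:cov_G} evaluated at the rescaled arguments and perform the substitution $\theta_k\mapsto\lambda^{-1/\alpha_k}\eta_k$. This is chosen precisely so that each phase $\lambda^{1/\alpha_k}s_k\theta_k$ collapses to $s_k\eta_k$ (and similarly for $t_k$), so that the numerators recover the integrand of $\cov(\G_\vvs^{\vv\alpha,\Lambda},\G_\vvt^{\vv\alpha,\Lambda})$. Collecting the powers of $\lambda$ --- namely $\lambda^{2/\alpha_k}$ from each factor $|\theta_k|^{-2}$, $\lambda^{-1/\alpha_k}$ from each $d\theta_k=\lambda^{-1/\alpha_k}d\eta_k$, and the factor $\lambda^{1/\alpha_1+1/\alpha_2-1}$ from the homogeneity of $\Psi_{\vv\alpha,\Lambda}$ --- gives the overall prefactor $\lambda^{2/\alpha_1+2/\alpha_2-1}=\lambda^{2c}$, which is exactly the claimed scaling.

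There is no serious obstacle here: the statement is ultimately a reflection of the joint homogeneity of the spectral density $\Psi_{\vv\alpha,\Lambda}(\vv\theta)\prodd k12|\theta_k|^{-2}$ under the anisotropic dilation $\theta_k\mapsto\lambda^{-1/\alpha_k}\theta_k$. The only thing requiring care is the consistent bookkeeping of the $\lambda$-exponents across the two changes of variables (in $r$ and in $\vv\theta$), together with checking that every integral stays absolutely convergent so that the manipulations are legitimate, which again follows from \eqref{eq:hnK}.
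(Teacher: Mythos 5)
Your proposal is correct and follows essentially the same route as the paper: reduce to the covariance identity via Gaussianity, use the homogeneity $\Psi_{\vv\alpha,\Lambda}(\lambda^{-1/\alpha_1}\theta_1,\lambda^{-1/\alpha_2}\theta_2)=\lambda^{1/\alpha_1+1/\alpha_2-1}\Psi_{\vv\alpha,\Lambda}(\vv\theta)$ obtained from the substitution $r\mapsto\lambda r$, and then rescale $\vv\theta$ in \eqref{eq:cov_G}; the exponent bookkeeping matches. The only difference is the direction of the change of variables in $\vv\theta$, which is immaterial.
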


\begin{proof}
Since $\ccbb{\G_\vvt^{\vv\alpha,\Lambda}}_{\vvt\ge 0}$ is a Gaussian random field, it suffices to show
$$
\cov(\G_{\lambda^{1/\alpha_1}s_1,\lambda^{1/\alpha_2}s_2}^{\vv\alpha,\Lambda}, \G_{\lambda^{1/\alpha_1}t_1,\lambda^{1/\alpha_2}t_2}^{\vv\alpha,\Lambda})=\lambda^{2/\alpha_1+2/\alpha_2-1}\cov(\G_\vvs^{\vv\alpha,\Lambda}, \G_\vvt^{\vv\alpha,\Lambda}).
$$
Define $\vv\theta'=(\theta_1',\theta_2'):=(\lambda^{1/\alpha_1}\theta_1,\lambda^{1/\alpha_2}\theta_2)$, then
$$
\prodd k12\frac{(e^{i\lambda^{1/\alpha_k}s_k\theta_k}-1)\wb{(e^{i\lambda^{1/\alpha_k}t_k\theta_k}-1)}}{|\theta_k|^2}=\lambda^{2/\alpha_1+2/\alpha_2}\prodd k12\frac{(e^{is_k\theta_k'}-1)\wb{(e^{it_k\theta_k'}-1)}}{|\theta_k'|^2}.
$$
For the function $\Psi_{\vv\alpha, \Lambda}$, we have
$$
\Psi_{\vv\alpha, \Lambda}(\vv\theta')=\lambda^{1-1/\alpha_1-1/\alpha_2}\Psi_{\vv\alpha,\Lambda}(\vv\theta)
$$
by change of variable $r\to\lambda r$. Applying these two identities to \eqref{eq:cov_G} completes the proof.
\end{proof}

The proofs of our results are based on estimates of asymptotics of second and fourth moments of the partial sums of each single random field $S_n$. However, except for the model with independent persistence, our estimates are by a different method from the one used in \citep{enriquez04simple} in one dimension. The method used there is essentially the time-domain approach for long-range dependence,  relying on the analysis of regular variation of the covariance function and the Karamata's theorem. This approach, however, cannot be easily adapted to two dimensions. Instead, we take the frequency-domain approach by working with Fourier transforms of the random fields.
\subsection{Discussions}

We conclude the introduction with a few remarks.
\begin{Rem}\label{rem:double_limit}
There are other types of limit theorems in the investigation of aggregated models. For ours, we can write
\equh\label{eq:single_limit}
\frac1{a(\vvn)\sqrt{m(\vvn)}}\what S_\vvn(\vvt) = \frac1{a(\vvn)}\sum_{\vvj\in[\vv1,\vvn\cdot\vvt]}\frac1{\sqrt{m(\vvn)}}\summ i1{m(\vvn)} X_\vvj^i.
\eque
Especially in econometrics literature, often the aggregated model is referred to the limit random field $\{\mathfrak X_\vvj\}_{\vvj\in\N^2}$ in the weak convergence
$m^{-1/2}\summ i1m X_\vvj\topp i \weakto \mathfrak X_\vvj, \vvj\in\N^2$,
and the investigation of the long-range dependence of the aggregation concerns the behavior of the covariance  function of $\mathfrak X$, or equivalently its spectral density near origin. One may then scale these aggregated random fields to obtain operator-scaling random fields indexed by $\vvt\in[0,1]^2$ via
\equh\label{eq:double_limit}
\frac1{a(\vvn)}\sum_{\vvj\in[\vv1,\vvn\cdot\vvt]} \mathfrak X_\vvj,
\eque
by appropriate choice of $a(\vvn)$. The limit theorems in the form of \eqref{eq:double_limit}  is referred to as {\em taking a double limit}, as one lets the number of copies in aggregation tend to infinity first (as $m\to\infty$), and then the size of the lattice tend to infinity (as $\vvn\to\infty$). The limit theorems in the form of \eqref{eq:single_limit} is referred to as {\em taking a single limit}.

\citet{enriquez04simple} established actually limit theorems by taking both single limit and double limit for the one-dimensional model. We only worked out the single limit here, which is more demanding to establish.
If we take the double limit for our aggregated model, we expect the limit random fields to remain the same in all cases in aforementioned theorems, as shown in one dimension in \citep{enriquez04simple}.
We are not aware of any other limit theorems for aggregated random fields for single limits. \end{Rem}
\begin{Rem}
Our aggregated random-field model can be viewed as with an infinite-dimensional parameter $\Lambda$ on $\Delta_1$ and $\vv\alpha\in(0,2)^2$, and hence it leads to a large flexible family of operator-scaling Gaussian random fields.
There are several recent limit theorems on operator-scaling Gaussian random fields. However, besides the fractional Brownian sheets, it is not easy to compare the limits from different models. This suggests that the counterparts of fractional Brownian motions in high dimensions are far from being unique, which is a challenge for investigation of long-range dependence in high dimensions.

For example, \citet{bierme17invariance} established limit theorems for another flexible family of operator-scaling Gaussian random fields, in the investigation of a different random-field model. The Gaussian random fields in the limit have covariance function
\[
\sigma^2\int_{\R^2}\pp{\prodd k12\frac{(e^{is_k\theta_k}-1)\wb{(e^{it_k\theta_k}-1)}}{|\theta_k|^2}}\frac1{(\log\psi(\vv\theta))^2}d\vv\theta,
\]
where $\psi$ is the logarithm of the characteristic function of certain multivariate stable distribution.
\citet{puplinskaite16aggregation} proposed another aggregated random-field model (in the sense of taking a double limit as in Remark \ref{rem:double_limit}), which may lead to both Gaussian and non-Gaussian stable limits. However,
when restricted to a fixed domain of attraction, their model is essentially determined by one parameter (see \citep[Eq.~(1.8)]{puplinskaite16aggregation}, where $\beta$ plays the similar role as $q$ in Enriquez's original model), and hence is less flexible than ours and the one in \citep{bierme17invariance}.

It is not immediately clear to us whether it is possible to relate limit Gaussian random fields in \citep{bierme17invariance,puplinskaite16aggregation} to ours, and we leave this question to further investigation.
\end{Rem}

\begin{Rem}Our statements are actually more general than those in the aforementioned papers, where the rates of the rectangular regions are essentially assumed in the form of $n_2 = n_1^\gamma$ for different choices of $\gamma$. We expect that assumptions therein can be generalized to the slightly more relaxed type here.
\end{Rem}
\begin{Rem}
Here we observe a {\em scaling-transition phenomenon}, that is, when the underlying rectangles of the partial-sum random fields grow at different speeds, different random fields may arise in the limit.
Such a phenomenon has been known in a few limit theorems for random fields in the literature recently \citep{puplinskaite16aggregation,puplinskaite15scaling,bierme17invariance}, while our result here is the first, 
to the best of our knowledge,
 to investigate the boundary case between regimes of non-critical speed.
The scaling-transition phenomenon  is essentially due to the fact that the covariance function of the limit Gaussian random field, say $C(\vvs,\vvt)$, does not factorize into product form $C_1(s_1,t_1)C_2(s_2,t_2)$ in general, with the only exception when the random field is a fractional Brownian sheet.
\end{Rem}

In the rest of the paper, we prove Theorems \ref{thm:1}, \ref{thm:2} and \ref{thm:non_critical} in Sections \ref{sec:1}, \ref{sec:2} and \ref{sec:non_critical}, respectively. Some auxiliary proofs are left to the Supplementary Material.

\section{Proof of Theorem \ref{thm:1}}\label{sec:1}

The proof of Theorem~\ref{thm:1} is based on three estimates, which all are based on a single random field $S_\vvn$, not the aggregated one $\what S_\vvn$.
\begin{Lem}\label{lem:1}
Under the assumption of Theorem~\ref{thm:1},
\equh\label{eq:cov}
\cov  (S_\vvn(\vv s),S_\vvn(\vv t)) \sim \sigma_1^2
 n_1^{2H_1}n_2^{2H_2}\cov\pp{\B^{\vvH}_\vvs,\B^{\vvH}_\vvt}
\eque
as $\vvn\to\infty$,
and there exists a constant $C$ such that
\equh\label{eq:var_t}
\esp S_\vvn(\vvt)^2 \le Cn_1^{2H_1}n_2^{2H_2}t_1^{2H_1}t_2^{2H_2} \mfa \vvn\in\N^2, \vvt\in[0,1]^2,
\eque
and
\equh\label{eq:4th}
\esp S_\vvn(\vvt)^4 \le Cn_1^{2H_1+2}n_2^{2H_2+2} \mfa \vvn\in\N^2,\vvt\in[0,1]^2.
\eque
\end{Lem}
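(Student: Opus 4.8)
The plan is to exploit the factorization of the single field that is special to the independent-persistence case. Since $\mu=\mu_{H_1}\otimes\mu_{H_2}$, the persistences $q_1,q_2$ are independent, and because the two walks $\varepsilon\topp1,\varepsilon\topp2$ are conditionally independent given $(q_1,q_2)$, they are in fact \emph{unconditionally} independent. Writing $T_n\topp k(t):=\summ j1{\floor{nt}}\varepsilon_j\topp k$ for the one-dimensional partial sum in direction $k$, this gives $S_\vvn(\vvt)=T_{n_1}\topp1(t_1)\,T_{n_2}\topp2(t_2)$ as a product of two independent, centered factors. Consequently all three estimates reduce to one-dimensional statements that I would then multiply: $\cov(S_\vvn(\vvs),S_\vvn(\vvt))=\esp[T_{n_1}\topp1(s_1)T_{n_1}\topp1(t_1)]\,\esp[T_{n_2}\topp2(s_2)T_{n_2}\topp2(t_2)]$, and likewise $\esp S_\vvn(\vvt)^2$ and $\esp S_\vvn(\vvt)^4$ factor over $k=1,2$.

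The basic one-dimensional input I would establish first is the two-point function. Given $q_k$, the sequence $\varepsilon\topp k$ is a symmetric two-state Markov chain with $\esp[\varepsilon_i\topp k\varepsilon_j\topp k\mid q_k]=(2q_k-1)^{|i-j|}$, so $\esp[\varepsilon_i\topp k\varepsilon_j\topp k]=\rho_k(|i-j|)$ with $\rho_k(m):=\esp[(2q_k-1)^m]$. Setting $u=1-q_k$ and using the density of $\mu_{H_k}$ near $q_k=1$, a Laplace/Karamata computation yields $\rho_k(m)\sim\Gamma(3-2H_k)\,m^{2H_k-2}$ as $m\to\infty$; crucially, the elementary inequality $(1-2u)^m\le e^{-2mu}$ on $[0,1/2]$ gives the \emph{uniform} bound $\rho_k(m)\le\Gamma(3-2H_k)\,m^{2H_k-2}$ for every $m\ge1$, and this is what drives the non-asymptotic estimates \eqref{eq:var_t} and \eqref{eq:4th}.

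With this in hand the covariance \eqref{eq:cov} follows by reading the one-dimensional cross-covariance as a Riemann sum, $\esp[T_n\topp k(s)T_n\topp k(t)]=\sum_{i\le\floor{ns}}\sum_{j\le\floor{nt}}\rho_k(|i-j|)\sim\Gamma(3-2H_k)\,n^{2H_k}\int_0^s\int_0^t|x-y|^{2H_k-2}\,dx\,dy$, the passage to the limit being justified by the uniform bound (dominated convergence) and by the negligibility of the diagonal contribution $\rho_k(0)=1$, which is only $O(n)$ and hence lower order since $2H_k>1$. Invoking the identity $\int_0^s\int_0^t|x-y|^{2H_k-2}\,dx\,dy=\frac{1}{2H_k(2H_k-1)}(s^{2H_k}+t^{2H_k}-|s-t|^{2H_k})$ turns each factor into the fractional-Brownian covariance carrying the one-dimensional Enriquez constant $\Gamma(3-2H_k)/(H_k(2H_k-1))$, and the product over $k=1,2$ reproduces \eqref{eq:cov} with prefactor $\sigma_1^2=\prodd k12\Gamma(3-2H_k)/(H_k(2H_k-1))=\sigma^2$. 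The second-moment bound \eqref{eq:var_t} is then the case $s_k=t_k$: summing $\rho_k(|i-j|)\le\Gamma(3-2H_k)|i-j|^{2H_k-2}$ over $i,j\le\floor{n_kt_k}$ gives $\esp[T_{n_k}\topp k(t_k)^2]\le Cn_k^{2H_k}t_k^{2H_k}$ uniformly (the diagonal absorbed since $\floor{n_kt_k}\le(n_kt_k)^{2H_k}$ when $n_kt_k\ge1$, and $T\equiv0$ otherwise), and multiplying the two directions yields \eqref{eq:var_t}.

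The fourth-moment bound \eqref{eq:4th} is where the real work lies, and I expect it to be the main obstacle. For the one-dimensional walk I would use the four-point identity for the Markov chain: writing $\varepsilon_n\topp k=\varepsilon_1\topp k\prod_{j<n}\eta_j$ with i.i.d.\ signs $\eta_j$ of mean $2q_k-1$, only the two ``inner'' gaps acquire odd multiplicity, so for sorted indices $i_1\le i_2\le i_3\le i_4$ one has $\esp[\varepsilon_{i_1}\topp k\cdots\varepsilon_{i_4}\topp k\mid q_k]=(2q_k-1)^{(i_2-i_1)+(i_4-i_3)}$. Hence $\esp[T_n\topp k(t)^4]=\sum_{i_1,\dots,i_4}\rho_k\big((i_{(2)}-i_{(1)})+(i_{(4)}-i_{(3)})\big)$ after sorting. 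Bounding $\rho_k$ by the uniform power law and parametrizing a strictly ordered quadruple by the two inner gaps $g_1,g_2\ge1$ and the free middle spacing produces a leading term of order $N^2\sum_{g_1,g_2}(g_1+g_2)^{2H_k-2}\asymp N^2\cdot N^{2H_k}=N^{2H_k+2}$ with $N=\floor{nt}\le n$, while configurations with coincidences among the $i_a$ are of strictly lower order and are dominated by the same estimate. This gives $\esp[T_{n_k}\topp k(t_k)^4]\le Cn_k^{2H_k+2}$, and the product over $k=1,2$ is \eqref{eq:4th}. The delicate points are the exact gap bookkeeping (so the exponent comes out precisely $2H_k+2$) and obtaining \emph{uniform} rather than merely asymptotic control, for which the pointwise bound on $\rho_k$ is essential.
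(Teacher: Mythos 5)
Your structural step is exactly the paper's: in the independent-persistence case the two walks are unconditionally independent, so $S_\vvn(\vvt)=S^{(1)}_{\floor{n_1t_1}}S^{(2)}_{\floor{n_2t_2}}$ factors into independent centered one-dimensional partial sums, and all three estimates are products of one-dimensional ones. Where you diverge is that the paper stops there and simply imports the one-dimensional covariance asymptotics and the bound $\esp S^{(k)}_{n_k}(t_k)^r=O((n_kt_k)^{r+2H_k-2})$ from the proof of Corollary~1 of \citet{enriquez04simple}, whereas you re-derive them. Your derivation is correct and makes the lemma self-contained: the uniform bound $\rho_k(m)=\esp[(2q_k-1)^m]\le\Gamma(3-2H_k)m^{2H_k-2}$ does follow from $(1-2u)^m\le e^{-2mu}$ (and the constant comes out exactly right, since $(1-H)2^{3-2H}\int_0^\infty e^{-2mu}u^{1-2H}du=2(1-H)\Gamma(2-2H)m^{2H-2}=\Gamma(3-2H)m^{2H-2}$); the Riemann-sum passage to $\frac{1}{2H_k(2H_k-1)}(s^{2H_k}+t^{2H_k}-|s-t|^{2H_k})$ is justified by that bound together with the $O(n)$ diagonal being negligible against $n^{2H_k}$; and the four-point identity $\esp[\varepsilon_{i_1}\cdots\varepsilon_{i_4}\mid q]=(2q-1)^{(i_2-i_1)+(i_4-i_3)}$ for sorted indices, combined with the gap count $N^2\sum_{\ell\le 2N}\ell\,\ell^{2H_k-2}\asymp N^{2H_k+2}$, gives the fourth-moment exponent $2H_k+2$ that matches \eqref{eq:4th}. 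Your combinatorial treatment of the fourth moment is in fact close in spirit to the computation the paper itself performs later for the dependent case in Proposition~3.6 (before it switches to Fourier bounds), so nothing in your argument is at odds with the paper; you have just traded a citation for an explicit, uniform, time-domain computation.
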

\begin{proof}
Observe that
\[
S_\vvn(\vvt) = \sum_{\vvj\in[\vv1,\vvn\cdot\vvt]}X_\vvj = \sum_{j_1=1}^{\floor{n_1t_1}}\varepsilon\topp1_{j_1}\sum_{j_2=1}^{\floor{n_2t_2}}\varepsilon\topp2_{j_2} = S_{\floor{n_1t_1}}\topp1S_{\floor{n_2t_2}}\topp2,
\]
where $S\topp k_n = \summ j1n \varepsilon_j\topp k$, $k=1,2$ are independent. Then,
\[
\cov(S_\vvn(\vvs),S_\vvn(\vvt)) = \prodd k12 \cov\pp{S_{n_k}\topp k(s_k),S_{n_k}\topp k(t_k)},
\]
and $\esp S_\vvn^r(\vvt) = \prodd k12\esp S_{n_k}\topp k(t_k)^r$. The corresponding estimates on $S\topp k, k=1,2$ have been obtained in~\citep{enriquez04simple}. More precisely, in the proof of Corollary 1 in \citep{enriquez04simple}, it was shown that
$$
\cov\pp{S_{n_k}\topp k(s_k),S_{n_k}\topp k(t_k)}\sim \frac{\Gamma(3-2H_k)}{H_k(2H_k-1)}n_k^{2H_k}\cov\pp{\B^{H_k}_{s_k}, \B^{H_k}_{t_k}}
$$
and
$$
\esp S_{n_k}\topp k(t_k)^r=O((n_kt_k)^{r+(2H_k-2)}), r\in 2\N.
$$
Taking the products for $k=1,2$ finishes the proof.
\end{proof}

\begin{proof}[Proof of Theorem~\ref{thm:1}]
We first prove the convergence of finite-dimensional distributions.  It suffices to show, for all $d\in\N$, $a_1,\dots,a_d\in\R, \vv t_1,\dots,\vvt_d\in\R_+^2$,
\equh\label{eq:fdd}
\frac1{n_1^{H_1}n_2^{H_2}\sqrt {m(\vvn)}}\summ w1d a_w \what S_\vvn (\vvt_w) \weakto \sigma_1\summ w1da_w\B^{\vvH}_{\vvt_w}.
\eque
Observe that the right-hand side is a centered Gaussian random variable. At the same time, the left-hand side can be expressed as
\[
\frac1{\sqrt{m(\vvn)}}\summ i1{m(\vvn)}\frac1{n_1^{H_1}n_2^{H_2}}\summ w1d a_wS_\vvn^i(\vvt_w).
\]
By Lindeberg--Feller central limit theorem for triangular arrays of i.i.d.~random variables, to show~\eqref{eq:fdd} it suffices to show, for
\equh\label{eq:Yn}
Y_\vvn:= \frac1{n_1^{H_1}n_2^{H_2}} \summ w1d a_wS_\vvn(\vvt_w),
\eque
\equh\label{eq:LF1}
\lim_{\vvn\to\infty}\var\pp{Y_\vvn} = \sigma_1^2\var\pp{\summ w1da_w\B^{\vvH}_{\vvt_w}},
\eque
and
\equh\label{eq:LF2}
\lim_{\vvn\to\infty}\esp\pp{Y_\vvn^2\inddd{Y_\vvn^2>m(\vvn)\eta}} = 0 \mfa \eta>0.
\eque

For \eqref{eq:LF1}, Write
\[
\var(Y_\vvn) = \frac{1}{n_1^{2H_1}n_2^{2H_2}}\summ w1d\summ{w'}1d a_wa_{w'}\cov(S_\vvn(\vvt_w),S_\vvn(\vvt_{w'})),
\]
and similarly for $\var(\summ w1d a_w\B^{\vvH}_{\vvt_w})$. Then,~\eqref{eq:LF1} follows from~\eqref{eq:cov}.

Next, we prove~\eqref{eq:LF2}. Observe that by Markov inequality and~\eqref{eq:4th},
\begin{align*}
\esp \pp{Y_\vvn^2\inddd{Y_\vvn^2>m(\vvn)\eta}} & \leq \frac1{m(\vvn)\eta}\esp Y_\vvn^4
 \le
 \frac1{m(\vvn)\eta}\pp{\frac1{n_1^{H_1}n_2^{H_2}}\summ w1d |a_w| \pp{\esp S_\vvn(\vvt_w)^4}^{1/4}}^4 \nonumber\\
 & \leq \frac C{m(\vvn)\eta n_1^{4H_1}n_2^{4H_2}}\summ w1d|a_w| n_1^{2H_1+2}n_2^{2H_2+2}= \frac C\eta\frac {n_1^{2-2H_1}n_2^{2-2H_2}}{m(\vvn)}.\nonumber
\end{align*}
Therefore,~\eqref{eq:LF2} is satisfied, under the assumption~\eqref{eq:m(n)}.

Next, we prove the tightness. By \citep[Theorem 3 and remark afterwards]{bickel71convergence}, it suffices to show that there exist $p\in\N, \gamma_1,\gamma_2>1, C>0$ such that
\equh\label{eq:Bickel}
\esp\abs{\frac{\what S_\vvn(\vvt)}{n_1^{H_1}n_2^{H_2}\sqrt{m(\vvn)}}}^{2p} \le C t_1^{\gamma_1}t_2^{\gamma_2}, \mfa \vvn\in\N^2, \vvt\in\R_+^2.
\eque
For this purpose, observe that
\[
\esp\what S_\vvn(\vvt)^2 = m(\vvn)\esp S_\vvn(\vvt)^2 \le Cm(\vvn)(n_1t_1)^{2H_1}(n_2t_2)^{2H_2}
\]
because of~\eqref{eq:var_t}.
The tightness thus follows.
\end{proof}
As the above proof shows, the functional central limit theorem is essentially based on the three estimates in Lemma~\ref{lem:1}. The functional central limit theorems for other models will be very similarly based on corresponding estimates moments. For the model with independent persistence, these estimates are  almost immediate from the one-dimensional ones in \citep{enriquez04simple}. However, for the model with dependent persistence, the one-dimensional estimates can no longer be used, and we have to take a completely different approach.
\section{Proof of Theorem \ref{thm:2}}\label{sec:2}
Throughout, we restrict ourselves to the aggregated model with dependent persistence, with
\[
\alpha_1,\alpha_2\in(0,2),
\]
and that
\equh\label{eq:n*}
n^*:=n_1^{\alpha_1}\sim n_2^{\alpha_2} \mmas \vvn\to\infty,
\eque
which we shall assume in this section without further mentioning. Some of our estimates are universal and do not depend on this assumption, and in this case we will say explicitly ``for all $\vvn\in\N^2$''.
We write also
\[
p(\vv\alpha) = \frac1{\alpha_1}+\frac1{\alpha_2}
\]
in the sequel.

We start with the computation of the asymptotic covariance.
\begin{Prop}\label{prop:cov_I_AD}
We have
\[
\lim_{\vvn\to\infty}\frac{\cov\pp{S_\vvn(\vvs),S_\vvn(\vvt)}}{|\vvn|^2/n^*} = \cov(\G_\vvs^{\vv\alpha,\Lambda},\G^{\vv\alpha,\Lambda}_\vvt),
\] and there exists a constant $C$ such that
\equh\label{eq:bickel}
\esp S_\vvn(\vvt)^2 \le C\frac{|\vvn|^2}{n^*}(t_1t_2)^{2-1/p(\vv\alpha)}
\eque
for all $\vvt\in[0,1]^2$ such that $\floor{\vvn\cdot \vv t} = \vvn\cdot\vv t$.

\end{Prop}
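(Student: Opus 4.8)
The plan is to condition on $\vv q$, reduce to one-dimensional walks, and work in the frequency domain. Because $S_\vvn(\vvt)=S\topp1_{\floor{n_1t_1}}S\topp2_{\floor{n_2t_2}}$ with $S\topp k_m=\summ j1m\varepsilon_j\topp k$, the two factors being conditionally independent given $\vv q$ and each $\varepsilon_1\topp k$ symmetric, one gets $\esp S_\vvn(\vvt)=0$ and
\[
\cov(S_\vvn(\vvs),S_\vvn(\vvt))=\esp\prodd k12\summ j1{\floor{n_ks_k}}\summ{j'}1{\floor{n_kt_k}}\rho_k^{|j-j'|},
\]
where $\rho_k:=2q_k-1=1-U_k$ and, off the truncation event, $U_k=(RW_k)^{-1/\alpha_k}$. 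The sequence $(\rho_k^{|h|})_h$ has spectral density $f_{\rho}(\theta)=\frac1{2\pi}\frac{1-\rho^2}{1-2\rho\cos\theta+\rho^2}$, so with $D_m(\theta):=\summ j1m e^{ij\theta}$ each inner double sum equals $\int_{-\pi}^\pi D_{\floor{n_ks_k}}(\theta_k)\wb{D_{\floor{n_kt_k}}(\theta_k)}f_{\rho_k}(\theta_k)d\theta_k$. Since $\int_{-\pi}^\pi|D_m|^2f_\rho=\esp[(S\topp k_m)^2\mid\vv q]\le m^2$, Fubini is justified and
\[
\cov(S_\vvn(\vvs),S_\vvn(\vvt))=\int_{[-\pi,\pi]^2}\prodd k12 D_{\floor{n_ks_k}}(\theta_k)\wb{D_{\floor{n_kt_k}}(\theta_k)}\,h_\vvn(\vv\theta)\,d\vv\theta,\quad h_\vvn(\vv\theta):=\esp\sbb{f_{\rho_1}(\theta_1)f_{\rho_2}(\theta_2)},
\]
the averaged bivariate spectral density carrying all the tail dependence of $\vv q$.

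For the limiting covariance I substitute $\theta_k=\phi_k/n_k$. Then $D_{\floor{n_ks_k}}(\phi_k/n_k)\sim n_k(e^{is_k\phi_k}-1)/(i\phi_k)$, so the four Dirichlet kernels contribute $|\vvn|^2\prodd k12\frac{(e^{is_k\phi_k}-1)\wb{(e^{it_k\phi_k}-1)}}{\phi_k^2}$ and the Jacobian contributes $|\vvn|^{-1}$; after dividing by $|\vvn|^2/n^*$ this leaves exactly $\frac{n^*}{|\vvn|}h_\vvn(\vv\phi/\vvn)$ against the limit kernel. The crux is the pointwise limit of that prefactor. Using $1-\cos(\phi_k/n_k)\sim\phi_k^2/(2n_k^2)$ one has $f_{\rho_k}(\phi_k/n_k)\sim\frac1{2\pi}\frac{2U_k}{U_k^2+\phi_k^2/n_k^2}$; inserting $U_k=(RW_k)^{-1/\alpha_k}$, $n_k^{\alpha_k}\sim n^*$, the law $r^{-2}dr\times\Lambda(d\vvw)$ of $(R,\vv W)$ and substituting $r=n^*u$ (so that $(n^*/(rw_k))^{1/\alpha_k}=(uw_k)^{-1/\alpha_k}$, $r^{-2}dr=(n^*)^{-1}u^{-2}du$, and the lower limit $r>1$ becomes $u>1/n^*\to0$) makes every power of $n^*$ cancel and gives
\[
\frac{n^*}{|\vvn|}h_\vvn(\vv\phi/\vvn)\longrightarrow\frac1{(2\pi)^2}\int_0^\infty\!\!\int_{\Delta_1}\prodd k12\frac{2(uw_k)^{-1/\alpha_k}}{(uw_k)^{-2/\alpha_k}+\phi_k^2}\Lambda(d\vvw)u^{-2}du=\frac1{(2\pi)^2}\Psi_{\vv\alpha,\Lambda}(\vv\phi).
\]
Together with dominated convergence this is precisely \eqref{eq:cov_G}. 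The truncation event $\{\vv U=(1,1)\}$ (where $\rho_k=0$, $f_{\rho_k}\equiv\frac1{2\pi}$) and the bulk $\{U_k>1/2\}$ contribute $O(n^*/|\vvn|)\to0$ after normalization, since $n^*/|\vvn|=n_1^{\alpha_1-1-\alpha_1/\alpha_2}\to0$ for $\alpha_1,\alpha_2\in(0,2)$; hence only the tail, i.e. the $(R,\vv W)$ integral above, survives.

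The main obstacle is the uniform moment bound \eqref{eq:bickel}, since the naive estimate $\summ j1m\summ{j'}1m\rho^{|j-j'|}\le Cm\min(m,1/U)$ followed by a single H\"older interpolation forces $s_1/\alpha_1+s_2/\alpha_2$ to the boundary value $1$, exactly where the $r$-integral diverges. Instead I would keep the $\min$ intact. Taking $\vvs=\vvt$ in the spectral identity, bounding $|D_{m_k}(\theta_k)|^2\le m_k^2\min(1,\pi^2/(m_k\theta_k)^2)$ and $f_\rho(\theta)\le CU/(U^2+\theta^2)$ on the tail $U\le1/2$, substituting $\theta_k=\psi_k/m_k$ and factoring the two $\psi$-integrals (each $\int_\R\min(1,\pi^2/\psi^2)\frac{b}{b^2+\psi^2}d\psi\le C\min(1,1/b)$ with $b_k=(rw_k)^{-1/\alpha_k}m_k$) reduces matters to
\[
\esp S_\vvn(\vvt)^2\le C(m_1m_2)^2\int_1^\infty\!\!\int_{\Delta_1}\prodd k12\min\pp{1,\frac{(rw_k)^{1/\alpha_k}}{m_k}}\Lambda(d\vvw)\,r^{-2}dr,\qquad m_k=n_kt_k.
\]
I would then evaluate the inner integral by splitting $r$ at the two saturation thresholds $r=m_k^{\alpha_k}/w_k$. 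The dominant, fully saturated piece is $(m_1m_2)^2 m_2^{-\alpha_2}\int_{\{w_2/w_1\le\kappa\}}w_2\Lambda(d\vvw)$ plus its $1\leftrightarrow2$ symmetric counterpart. Here the critical-speed relation \eqref{eq:n*} is essential: it renders the threshold ratio $\kappa=m_2^{\alpha_2}/m_1^{\alpha_1}=t_2^{\alpha_2}/t_1^{\alpha_1}$ independent of $\vvn$, while $(m_1m_2)^2 m_2^{-\alpha_2}=(|\vvn|^2/n^*)\,t_1^2t_2^{2-\alpha_2}$ and $\int_{\{w_2/w_1\le\kappa\}}w_2\Lambda\le C\min(\kappa,1)$. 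Using the two branches of this $\min$ according to whether $t_2^{\alpha_2}\lessgtr t_1^{\alpha_1}$ yields the two candidate factors $t_1^{2-\alpha_1}t_2^2$ and $t_1^2t_2^{2-\alpha_2}$; each ratio to $(t_1t_2)^{2-1/p(\vv\alpha)}$ is monotone in $t_2$ and attains its maximum on the common threshold $t_2=t_1^{\alpha_1/\alpha_2}$, where the exponent collapses to $0$, so both are bounded (the computation uses only $p(\vv\alpha)>1$, which holds since $\alpha_k<2$). This gives \eqref{eq:bickel}; the same $\min$-type bound on $\frac{n^*}{|\vvn|}h_\vvn(\vv\phi/\vvn)$, which is integrable against the limit kernel by the homogeneity of $\Psi_{\vv\alpha,\Lambda}$ recorded in the Proposition above, supplies both the dominating function for the convergence step and the finiteness of $\Psi_{\vv\alpha,\Lambda}$ announced after \eqref{eq:Psi}.
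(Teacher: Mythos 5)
Your treatment of the limiting covariance is essentially the paper's own argument in different notation: your averaged spectral density $h_\vvn(\vv\theta)=\esp\bb{f_{\rho_1}(\theta_1)f_{\rho_2}(\theta_2)}$ is exactly $(2\pi)^{-2}\what r(\vv\theta)$ from Lemma \ref{lem:covariance}, and the substitutions $\theta_k=\phi_k/n_k$ and $r\to n^*r$ followed by dominated convergence are precisely what Lemmas \ref{lem:D} and \ref{lem:r_hat} carry out. Where you genuinely depart from the paper is the uniform bound \eqref{eq:bickel}: the paper integrates out $(r,\vvw)$ first to obtain the $\vv\theta$-bound \eqref{eq:bound_rhat} and then integrates against the Dirichlet kernels, whereas you integrate out $\vv\theta$ first, conditionally on $(r,\vvw)$, reducing to $\esp\bb{(S\topp k_{m_k})^2\mmid \vv q}\le Cm_k^2\min\{1,(rw_k)^{1/\alpha_k}/m_k\}$ and then a product-of-minima integral against $r^{-2}dr\,\Lambda(d\vvw)$. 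This is a legitimate and arguably more transparent route (that one-dimensional bound can even be read off in the time domain), and your remark that a single H\"older interpolation $\min(1,x)\le x^\gamma$ fails exactly at $\gamma_1/\alpha_1+\gamma_2/\alpha_2=1$ correctly identifies why the $\min$ must be kept.

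There is, however, one step you assert without justification: you call the fully saturated piece $\{r\ge\max_k m_k^{\alpha_k}/w_k\}$ ``dominant'' and bound only that piece. This dominance is false pointwise in $\vvw$. For instance, when $\alpha_1>1$, on the range $m_2^{\alpha_2}/w_2\le r\le m_1^{\alpha_1}/w_1$ the integral of $m_1^{-1}(rw_1)^{1/\alpha_1}r^{-2}$ is of order $m_1^{-1}w_1^{1/\alpha_1}w_2^{1-1/\alpha_1}m_2^{\alpha_2/\alpha_1-\alpha_2}$, which exceeds the saturated contribution $w_1m_1^{-\alpha_1}$ by the factor $\pp{w_2m_1^{\alpha_1}/(w_1m_2^{\alpha_2})}^{1-1/\alpha_1}\ge1$ there. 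The two mixed pieces and the doubly unsaturated piece do in the end obey the same bound $C\frac{|\vvn|^2}{n^*}(t_1t_2)^{2-1/p(\vv\alpha)}$ after the $\Lambda$-integration --- one uses $p(\vv\alpha)>1$ to evaluate $\int_1^{A}r^{p(\vv\alpha)-2}dr$ and the same exponent collapse at $t_2=t_1^{\alpha_1/\alpha_2}$ that you exploited --- but this must be checked separately; it does not follow from the saturated case. Two smaller omissions: the truncation event $\{\vv U=(1,1)\}$ is not covered by your displayed $\min$-product (there the conditional second moment equals $m_1m_2$ exactly, which is absorbed via $|\vvn|t_1t_2\le|\vvn|^{2-1/p(\vv\alpha)}(t_1t_2)^{2-1/p(\vv\alpha)}$ and \eqref{eq:n*}, as in the paper), and in the dominated-convergence step for the limit you should exhibit an $\vvn$-free integrable majorant on the non-tail region $\{U_k>\delta \text{ for some } k\}$, not only observe that its contribution vanishes pointwise.
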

The two estimates above are obtained by computing the Fourier transforms of the covariance.
 For background on multidimensional Fourier transforms, see \cite{pinsky02introduction}.

Let $r$ denote the covariance function of the stationary random field
\[
r(\vv \ell) = \cov(X_{\vv1},X_{\vv1+\vvl}),
\]
and $\what r(\vv\theta) := \sum_{\vv\ell\in\Z^2}r(\vvl)\exp(i\ip{\vv\ell\cdot\vv\theta})$ its Fourier transform. Introduce
 the Fourier transform of the sequence
 $\{a_j\}_{j\in\N} = \{\indd{1\leq j\leq n}\}_{j\in\N}$
\[
D_n(\theta):= \summ j1{n}e^{ij\theta},
\]
and
set
\[
 D_{\vvn,\vvs,\vvt}(\vv\theta):= \prodd k12D_{\sfloor{n_ks_k}}\pp{\theta_k}\wb{D_{\sfloor{n_kt_k}}\pp{\theta_k}}.
\]
\begin{Lem}\label{lem:covariance}We have
\equh
\cov(S_\vvn(\vvs),S_\vvn(\vvt))
 = \frac1{(2\pi)^2}\int_{(-\pi,\pi)^2}D_{\vvn,\vvs,\vvt}(\vv\theta)\,\what r\pp{\vv\theta}d\vv\theta.\label{eq:cov1}
\eque
\end{Lem}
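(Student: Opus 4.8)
The plan is to reduce the covariance to a double lattice sum of the stationary covariance $r$ and then insert the Fourier inversion of $r$, so that the two finite lattice sums collapse into the Dirichlet kernels making up $D_{\vvn,\vvs,\vvt}$. By bilinearity of the covariance and stationarity,
\[
\cov(S_\vvn(\vvs),S_\vvn(\vvt)) = \sum_{\vvj\in[\vv1,\vvn\cdot\vvs]}\sum_{\vvj'\in[\vv1,\vvn\cdot\vvt]}r(\vvj-\vvj').
\]
Substituting $r(\vvj-\vvj') = \frac1{(2\pi)^2}\int_{(-\pi,\pi)^2}\what r(\vv\theta)e^{-i\ip{(\vvj-\vvj')\cdot\vv\theta}}\,d\vv\theta$ and factoring the exponential as $e^{-i\ip{\vvj\cdot\vv\theta}}e^{i\ip{\vvj'\cdot\vv\theta}}$, I may interchange the two finite sums with the integral. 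The sum over $\vvj$ yields $\prodd k12\wb{D_{\sfloor{n_ks_k}}(\theta_k)}$ and the sum over $\vvj'$ yields $\prodd k12 D_{\sfloor{n_kt_k}}(\theta_k)$; since $r$ and $\what r$ are real and even, taking complex conjugates reassembles these factors into $D_{\vvn,\vvs,\vvt}(\vv\theta)$, giving \eqref{eq:cov1}.

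The only delicate point is the meaning of $\what r$ and the legitimacy of the inversion step. Because $\alpha_1,\alpha_2\in(0,2)$ force $p(\vv\alpha)>1$, the field has long-range dependence and $r$ is not absolutely summable: conditioning on $\vv q$ and using the conditional independence of the two walks gives $r(\vvl)=\esp\bb{(1-U_1)^{|l_1|}(1-U_2)^{|l_2|}}$, whence $\sum_\vvl r(\vvl)=\esp\bb{\prodd k12(2-U_k)/U_k}=\infty$. Thus the series defining $\what r$ must not be read as an absolutely convergent sum. The remedy is to realize $\what r$ as a genuine spectral density in $L^1$: writing $g_\rho(\theta):=\frac{1-\rho^2}{1-2\rho\cos\theta+\rho^2}=\sum_{l\in\Z}\rho^{|l|}e^{il\theta}$ for the (absolutely convergent) spectral density of a geometric sequence, I would set
\[
\what r(\vv\theta)=\esp\bb{g_{1-U_1}(\theta_1)\,g_{1-U_2}(\theta_2)}.
\]
Since $g_\rho\ge0$ and $\int_{-\pi}^\pi g_\rho(\theta)\,d\theta=2\pi$ for every $\rho\in[0,1)$, Tonelli gives $\int_{(-\pi,\pi)^2}\what r=(2\pi)^2$, so $\what r\in L^1$.

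With this definition the inversion formula used above is justified by Fubini (the exponential is bounded and $\what r\in L^1$), combined with the elementary identity $\frac1{2\pi}\int_{-\pi}^\pi g_\rho(\theta)e^{-il\theta}\,d\theta=\rho^{|l|}$, which recovers $r(\vvl)=\esp[(1-U_1)^{|l_1|}(1-U_2)^{|l_2|}]$ as required. I expect the interchange of the finite lattice sums with the integral to be entirely routine; the real obstacle is purely this bookkeeping around non-summability, i.e.\ pinning down $\what r$ as the $L^1$ spectral density rather than a pointwise series and checking the inversion through the conditional geometric structure. As a bonus, this same formula for $\what r$ is precisely what will later be expanded near the origin, via $g_{1-U}(\theta)\approx 2U/(U^2+\theta^2)$ with $U=(RW)^{-1/\alpha}$, to produce the integrand of $\Psi_{\vv\alpha,\Lambda}$.
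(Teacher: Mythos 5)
Your proof is correct and follows essentially the same route as the paper's: both reduce the covariance to the lattice sum $\sum_{\vvj,\vvj'} r(\vvj-\vvj')$ and pass to the frequency domain, the paper via Parseval's theorem applied to the cross-correlation of the two indicator sequences and you via direct Fourier inversion of $r$ followed by Fubini, which is the identical computation. Your additional care in realizing $\what r$ as the $L^1$ spectral density $\esp\bb{g_{1-U_1}(\theta_1)\,g_{1-U_2}(\theta_2)}$ rather than as a non-absolutely-convergent series is a legitimate refinement of a point the paper leaves implicit until Lemma \ref{lem:r_hat}.
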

\begin{proof}
To see this, we first write
\equh
\cov(S_\vvn(\vvs),S_\vvn(\vvt)) = \sum_{\vvi\in[\vv1,\vvn\cdot \vvs]}\sum_{\vvj\in[\vv1,\vvn\cdot\vvt]}\cov(X_{\vvi},X_{\vvj})  = \sum_{\vvl\in\Z^2}r(\vvl)\sum_{\vvj\in\Z^2}\inddd{\vvj\in[\vv1,\vvn\cdot\vvs],\vvj+\vvl\in[\vv1,\vvn\cdot\vvt]}.\label{eq:cov_Fourier}
\eque
Introduce $a_\vvj = \indd{\vvj\in[\vv1,\vvn\cdot\vvs]}, b_\vvj = \indd{\vvj\in[\vv1,\vvn\cdot\vvt]}, \vvj\in\Z^2$, and let $\what a(\vv\theta)$ and $\what b(\vv\theta)$ denote their Fourier transforms, respectively. Then, for each $\vvl\in\Z^2$, we have
\[
\sum_{\vvj\in\Z^2}\indd{\vvj\in[\vv1,\vvn\cdot\vvs],\vvj+\vvl\in[\vv1,\vvn\cdot\vvt]} = \sum_{\vvj\in\Z^2}a_\vvj b_{\vvj+\vvl},
\]
which is the $\vvl$-th coefficient of $\wb{\what a(\vv\theta)}\what b(\vv\theta)$. We have that
\[
\what a(\vv\theta) = \prodd k12D_{\sfloor{n_ks_k}}(\theta_k) \qmand \what b(\vv\theta) = \prodd k12D_{\sfloor{n_kt_k}}(\theta_k).
\]
So by Parseval's theorem,~\eqref{eq:cov_Fourier} becomes
\[
\cov(S_\vvn(\vvs),S_\vvn(\vvt)) =  \frac1{(2\pi)^2}\int_{(-\pi,\pi)^2}\wb{\wb{\what a(\vv\theta)}\what b(\vv\theta)} \what r(\vv\theta)d\vv\theta,
\]
which yields~\eqref{eq:cov1}.
\end{proof}

The next step is to apply a change of variables
\[
\vv\theta\to\frac{\vv\theta}{\vvn}:=\pp{\frac{\theta_1}{n_1},\frac{\theta_2}{n_2}},
\] and hence to write
\equh
\cov  (S_\vvn(\vv s),S_\vvn(\vv t)) = \frac{1}{|\vvn|(2\pi)^2}\int_{\vvn\cdot(-\pi,\pi)^2}{D_{\vvn,\vvs,\vvt}(\vv\theta/\vvn)}\what r\pp{\vv\theta/\vvn}d\vv\theta.\label{eq:cov_DCT}
\eque
The two functions of the integrand can then be treated separately. The following results on $D_{\vvn,\vvs,\vvt}$ are well known and provided here only for the sake of completeness. In the sequel, we write
\[
\R_o^2 = (\R\setminus\{0\})^2.
\]
\begin{Lem}\label{lem:D}
In the notations above,
\[
\lim_{\vvn\to\infty}\frac{D_{\vvn,\vvs,\vvt}(\vv\theta/\vvn)}{|\vvn|^2} = \prodd k12 \frac{(e^{is_k\theta_k}-1)\wb{(e^{it_k\theta_k}-1)}}{|\theta_k|^2} \mfa \vv\theta\in\R_o^2
\]
and
\equh\label{eq:D_upper}
\abs{\frac{D_{\vvn,\vvs,\vvt}(\vv\theta/\vvn)}{|\vvn|^2}} \le \pi^2\prodd k12\min\ccbb{s_kt_k,\frac1{|\theta_k|^2}}, \mfa \vvn\in\N^2, |\theta_k|\le n_k\pi.
\eque
\end{Lem}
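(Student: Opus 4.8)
The plan is to exploit the product structure of $D_{\vvn,\vvs,\vvt}$ and reduce both assertions to one-dimensional estimates for the geometric sum $D_m(\theta)=\summ j1m e^{ij\theta}$, after which I take products over $k=1,2$. Concretely, $\frac{D_{\vvn,\vvs,\vvt}(\vv\theta/\vvn)}{|\vvn|^2}=\prodd k12\frac{D_{\sfloor{n_ks_k}}(\theta_k/n_k)}{n_k}\cdot\frac{\wb{D_{\sfloor{n_kt_k}}(\theta_k/n_k)}}{n_k}$, so it suffices to understand a single factor $\frac{D_{\sfloor{n_ks_k}}(\theta_k/n_k)}{n_k}$, both in the limit and uniformly.

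For the pointwise limit I would use the closed form $D_m(\theta)=e^{i\theta}\frac{e^{im\theta}-1}{e^{i\theta}-1}$. Fixing $\theta\neq0$ and taking $m=\sfloor{ns}$ with argument $\theta/n$, I write $\frac{D_{\sfloor{ns}}(\theta/n)}{n}=e^{i\theta/n}\cdot\frac{e^{i\sfloor{ns}\theta/n}-1}{n(e^{i\theta/n}-1)}$. As $\vvn\to\infty$ one has $e^{i\theta/n}\to1$, $\sfloor{ns}/n\to s$ so $e^{i\sfloor{ns}\theta/n}\to e^{is\theta}$, and $n(e^{i\theta/n}-1)\to i\theta$; hence the factor converges to $\frac{e^{is\theta}-1}{i\theta}$. (Equivalently, $\frac1n\summ j1{\sfloor{ns}}e^{ij\theta/n}$ is a Riemann sum for $\int_0^s e^{ix\theta}\,dx$.) Taking the complex conjugate for the $t$-factor, multiplying, and using $|i\theta|^2=|\theta|^2$ produces the single-coordinate limit $\frac{(e^{is\theta}-1)\wb{(e^{it\theta}-1)}}{|\theta|^2}$, and the product over $k=1,2$ is exactly the claimed limit on $\R_o^2$.

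For the uniform bound I would combine two elementary estimates on $D_m$. The trivial one is $\sabs{D_m(\theta)}\le m$. The second comes from $\sabs{D_m(\theta)}=\frac{\sabs{\sin(m\theta/2)}}{\sabs{\sin(\theta/2)}}\le\frac1{\sabs{\sin(\theta/2)}}$ together with Jordan's inequality $\sabs{\sin x}\ge\frac2\pi\sabs{x}$ on $\sabs{x}\le\pi/2$, which gives $\sabs{D_m(\theta)}\le\frac\pi{\sabs\theta}$ whenever $\sabs\theta\le\pi$. Applying both with argument $\theta_k/n_k$ (legitimate because $\sabs{\theta_k}\le n_k\pi$) and dividing by $n_k$ yields $\frac{\sabs{D_{\sfloor{n_ks_k}}(\theta_k/n_k)}}{n_k}\le\min\sccbb{s_k,\frac\pi{\sabs{\theta_k}}}$, and likewise with $t_k$. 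Multiplying the $s_k$- and $t_k$-factors and bounding each minimum by either of its two arguments gives a single-coordinate bound of the form $\min\sccbb{s_kt_k,\frac{\pi^2}{\sabs{\theta_k}^2}}$; since $s_k,t_k\in[0,1]$ and $\min\{a,cb\}\le c\min\{a,b\}$ for $c\ge1$, this is a constant multiple of $\min\sccbb{s_kt_k,\frac1{\sabs{\theta_k}^2}}$, and the product over $k=1,2$ has the stated shape.

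The only place requiring any care, and hence the expected obstacle, is the bookkeeping of the minimum when the $s_k$- and $t_k$-factors are combined across the different regimes of $\sabs{\theta_k}$. I emphasize, however, that the precise leading constant is immaterial for how this lemma is used: \eqref{eq:D_upper} is needed only as a dominating function, integrable on $\R^2$ uniformly in $\vvn$, so that dominated convergence applies to \eqref{eq:cov_DCT}. Indeed $\prodd k12\min\sccbb{s_kt_k,1/\sabs{\theta_k}^2}$ is bounded near each $\theta_k=0$ and decays like $\sabs{\theta_k}^{-2}$ at infinity, hence integrable; combined with the pointwise limit above, this is exactly what drives the covariance asymptotics in Proposition~\ref{prop:cov_I_AD}.
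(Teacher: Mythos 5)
Your proposal is correct and follows essentially the same route as the paper: the product structure reduces everything to the one-dimensional kernel $D_m$, the pointwise limit comes from the closed geometric-sum form, and the uniform bound combines the trivial estimate $|D_m(\theta)|\le m$ with $|D_m(\theta)|=|\sin(m\theta/2)|/|\sin(\theta/2)|$ and Jordan's inequality $|\sin x|\ge 2|x|/\pi$ on $|x|\le\pi/2$, exactly as in the paper's display \eqref{eq:D}. Your remark about the leading constant is apt — taking products of the four one-dimensional bounds naively yields $\pi^4$ rather than the stated $\pi^2$ (a slack already present in the paper's own argument), and as you note this is immaterial since the bound is only used as an integrable dominating function in \eqref{eq:cov_DCT}.
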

\begin{proof}
It is easy to show that
\[
\limn \frac1n{D_{\floor{nt}}}\pp{\frac \theta n} = \frac{e^{it\theta}-1}{i\theta},
\]
and, because of $|\sin(x)|\ge 2|x|/\pi$ for $|x|\le \pi/2$, and $|\sin x|\le \min(|x|,1)$,
\equh\label{eq:D}
\abs{\frac1n D_{\floor{nt}}\pp{\frac\theta n}} = \abs{\frac{\sin(\floor{nt}\theta/(2n))}{n\sin(\theta/(2n))}}\le \pi \min\ccbb{t, \frac1{|\theta|}}, n\in\N, |\theta|\le n\pi.
\eque
The desired results now follow.
\end{proof}
Most of the effort will be devoted to the analysis of $r$ and $\what r$.
\begin{Lem}For  $\vv\theta\in(-\pi,\pi)^2$ such that $\theta_1\neq 0, \theta_2\neq 0$,
 \equh\label{eq:r_Fourier}
 \what r(\vv\theta)
=\int G^*(\vvu,\vv\theta)\mu^*(d\vv u) \mwith G^*(\vvu,\vv\theta):=  \prodd k12\frac{u_k(2-u_k)}{u_k^2+2(1-u_k)(1-\cos\theta_k)}.
\eque
\label{lem:r_hat}Moreover,
\[
\what r(\vv\theta/\vvn)\sim \frac{|\vvn|}{n^*}\Psi_{\vv\alpha,\Lambda}(\vv\theta)
\]
as $\vvn\to\infty$, and there exists a constant $C$ such that
\equh\label{eq:bound_rhat}
\what r(\vv\theta/\vv n) \le C\pp{\frac{|\vv n|}{n^*}|\theta_1|^{1/p(\vv\alpha)-1} + \frac{n_1}{n^*}|\theta_1|^{\alpha_1-1} +\frac{n_2}{n^*}|\theta_2|^{\alpha_2-1} + 1}.
\eque
for all $\vv\theta\in\R_o^2$.
\end{Lem}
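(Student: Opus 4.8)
The plan is to obtain the spectral density $\what r$ by conditioning on the persistence parameters, and then to analyse the rescaled density $\what r(\vv\theta/\vvn)$ by a change of variables adapted to the tail of $\mu^*$ at $\vv 0$. Conditionally on $\vv U=\vvu$ the two chains $\varepsilon\topp1,\varepsilon\topp2$ are independent, and each is a stationary $\pm1$ Markov chain whose one-step correlation is $2q_k-1=1-u_k$ (recall $\vv q=(1,1)-\vv U/2$); hence $\esp(X_{\vv1}X_{\vv1+\vvl}\mid\vvu)=\prodd k12(1-u_k)^{|\ell_k|}$, and since $X$ is centered, $r(\vvl)=\int\prodd k12(1-u_k)^{|\ell_k|}\mu^*(d\vvu)$. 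Forming $\what r(\vv\theta)=\sum_{\vvl\in\Z^2}r(\vvl)e^{i\ip{\vvl\cdot\vv\theta}}$ and interchanging sum and integral, the two-sided geometric series sums for each $k$ to the Poisson kernel $\frac{u_k(2-u_k)}{u_k^2+2(1-u_k)(1-\cos\theta_k)}$, i.e.\ the $k$-th factor of $G^*$. For $\theta_k\ne0$ the interchange is justified by dominated convergence, since the symmetric partial sums $\sum_{|\ell|\le N}(1-u_k)^{|\ell|}e^{i\ell\theta_k}$ are bounded uniformly in $N$ and in $u_k\in(0,1]$ by a constant depending only on $\theta_k$ (a geometric-tail estimate using $|1-(1-u_k)e^{i\theta_k}|\ge c(\theta_k)>0$), which is trivially $\mu^*$-integrable. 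This gives \eqref{eq:r_Fourier}.

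For the asymptotics I would substitute $\vv\theta/\vvn$, use $1-\cos(\theta_k/n_k)\sim\theta_k^2/(2n_k^2)$, and discard the truncation event $\{\vv{\wt U}\notin(0,1)^2\}$, on which $G^*\equiv1$ so its contribution is $O(1)=o(|\vvn|/n^*)$. On the complement $U_k=(RW_k)^{-1/\alpha_k}$, and I would write $\what r(\vv\theta/\vvn)$ as an integral against $r^{-2}dr\,\Lambda(d\vvw)$ over $(1,\infty)\times\Delta_1$ and perform the scaling substitution $r=n^*\rho$. Since at critical speed $(n^*)^{1/\alpha_k}\sim n_k$, the $k$-th factor equals $(n^*)^{1/\alpha_k}$ times $\frac{2(\rho w_k)^{-1/\alpha_k}}{(\rho w_k)^{-2/\alpha_k}+\theta_k^2}$ up to factors tending to $1$, while $r^{-2}dr=(n^*)^{-1}\rho^{-2}d\rho$; collecting powers yields the prefactor $(n^*)^{p(\vv\alpha)}/n^*\sim|\vvn|/n^*$ and the integrand of $\Psi_{\vv\alpha,\Lambda}$. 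The truncation indicator and the $(2-U_k),(1-U_k)$ corrections tend to $1$ pointwise and the lower endpoint $1/n^*\to0$, so dominated convergence (with dominating function furnished by the bound below) gives $\what r(\vv\theta/\vvn)\sim(|\vvn|/n^*)\Psi_{\vv\alpha,\Lambda}(\vv\theta)$.

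For \eqref{eq:bound_rhat} I would start from the single-factor estimates $g(u,\phi):=\frac{u(2-u)}{u^2+2(1-u)(1-\cos\phi)}\le C\frac{u}{u^2+\phi^2}$ for $u\in(0,1/2]$ and $g\le4$ for $u\in(1/2,1]$ (using $1-\cos\phi\ge 2\phi^2/\pi^2$ on $|\phi|\le\pi$). With $u_k=(rw_k)^{-1/\alpha_k}$ and $\phi_k=\theta_k/n_k$, the tail part of each factor is controlled by $C\frac{(rw_k)^{-1/\alpha_k}}{(rw_k)^{-2/\alpha_k}+\phi_k^2}\le C\min((rw_k)^{1/\alpha_k},(rw_k)^{-1/\alpha_k}n_k^2\theta_k^{-2})$, which increases then decreases in $r$ with crossover radius $rw_k\asymp n^*|\theta_k|^{-\alpha_k}$. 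I would split $(1,\infty)$ at the two crossover radii, separating also the order-one bulk $\{U_k>1/2\}$, and integrate the resulting powers of $r$ against $r^{-2}dr$. The bulk--bulk region gives the constant $1$; the two regions where one coordinate is in the bulk and the other in the tail give the marginal contributions $\frac{n_k}{n^*}|\theta_k|^{\alpha_k-1}$ (the rescaled one-dimensional integral $\int_0^1 u^{\alpha_k}(u^2+\phi_k^2)^{-1}du\asymp|\phi_k|^{\alpha_k-1}=\frac{n_k}{n^*}|\theta_k|^{\alpha_k-1}$ when $\alpha_k<1$, and $O(1)$ otherwise); and the tail--tail region gives the main term, bounded by $\frac{|\vvn|}{n^*}|\theta_1|^{1/p(\vv\alpha)-1}$. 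Integrating over $\Lambda$ on $\Delta_1$ (where $w_k\in(0,1)$) is harmless, and assembling the four pieces gives \eqref{eq:bound_rhat}.

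The hard part is the tail--tail term, and the matching domination in the asymptotic step: there the two factors of $G^*$ are coupled through the single radial variable $R$ yet peak at different radii governed by $\theta_1$ and $\theta_2$, so pinning down the one-sided exponent $1/p(\vv\alpha)-1$ (rather than a $\theta_1,\theta_2$-symmetric expression) forces a case analysis on the ordering of the two crossover radii and on the signs of the exponents $1/\alpha_1\pm1/\alpha_2-1$ that arise after integrating in $r$. The critical-speed relation $n_1^{\alpha_1}\sim n_2^{\alpha_2}$ is precisely what keeps these contributions balanced, and the finiteness of the radial integral (equivalently $p(\vv\alpha)>1$, which holds as $\alpha_k<2$) is what makes the tail--tail term finite. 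Since this analysis also produces the integrable dominating function needed for the dominated-convergence step, \eqref{eq:bound_rhat} must be established before the asymptotic relation.
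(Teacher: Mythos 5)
Your overall route is the same as the paper's: compute $r(\vv\ell)=\int\prod_k(1-u_k)^{|\ell_k|}\mu^*(d\vv u)$ by conditioning, sum the two-sided geometric series to get the Poisson-kernel factors of $G^*$, pass to polar coordinates $u_k=(rw_k)^{-1/\alpha_k}$, rescale $r\to n^*r$, and prove the asymptotics by dominated convergence after splitting the $(r,\vv w)$-domain into a near-origin (tail) region, which carries the main term, and bulk regions, which produce the lower-order terms $n_k|\theta_k|^{\alpha_k-1}/n^*$ and $O(1)$. Your added justification of the sum/integral interchange and your observation that the order of the two steps must be (bound first, then asymptotics) are both correct and consistent with the paper.

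The one genuine problem is in your treatment of the tail--tail term of \eqref{eq:bound_rhat}. You promise to extract the \emph{one-sided} bound $C(|\vvn|/n^*)\,|\theta_1|^{1/p(\vv\alpha)-1}$ by a case analysis on the ordering of the two crossover radii, but no such analysis can succeed: that bound is false in general. Indeed, letting $\vvn\to\infty$ with $\vv\theta$ fixed, \eqref{eq:bound_rhat} in the one-sided form would force $\Psi_{\vv\alpha,\Lambda}(\vv\theta)\le C|\theta_1|^{1/p(\vv\alpha)-1}$ uniformly in $\theta_2$; but as $\theta_2\to0$ the second factor of the integrand of $\Psi_{\vv\alpha,\Lambda}$ increases to $2(rw_2)^{1/\alpha_2}$ and the resulting radial integral behaves like $\int^\infty r^{-1/\alpha_1+1/\alpha_2-2}dr$ at infinity, which diverges whenever $1/\alpha_2\ge1+1/\alpha_1$ (e.g.\ $\alpha_1=1$, $\alpha_2\le1/2$), so $\Psi_{\vv\alpha,\Lambda}(\theta_1,\theta_2)\to\infty$. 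The displayed form of \eqref{eq:bound_rhat} is in fact a typographical slip in the statement: what the computation naturally yields --- and what the paper actually proves in its analogue of your tail--tail estimate and then uses in the second-moment bound \eqref{eq:Snt2ndM} --- is the symmetric bound $C(|\vvn|/n^*)\,|\theta_1\theta_2|^{1/p(\vv\alpha)-1}$, obtained by splitting the radial integral at the single crossover $r=d(\vv\theta,\vvw,\vv\alpha)\asymp|\theta_1\theta_2|^{-1/p(\vv\alpha)}w_1^{-\alpha_2/(\alpha_1+\alpha_2)}w_2^{-\alpha_1/(\alpha_1+\alpha_2)}$ where the product $\prod_k\min\{(rw_k)^{1/\alpha_k},((rw_k)^{1/\alpha_k}\theta_k^2)^{-1}\}$ switches from increasing ($r^{p(\vv\alpha)-2}$) to decreasing ($r^{-p(\vv\alpha)-2}$). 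You should state and prove the symmetric version; it is integrable against $\prod_k\min\{t_k^2,\theta_k^{-2}\}\,d\vv\theta$ precisely because $1/p(\vv\alpha)-1>-1$, and it is all that is needed downstream. With that correction your argument goes through.
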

\begin{proof}
 We have
\begin{align*}
r(\vvl) & = \esp(X_{\vv1}X_{\vv1+\vvl}) = \esp\pp{\varepsilon_1\topp1\varepsilon_{1+\ell_1}\topp1\varepsilon_1\topp2\varepsilon_{1+\ell_2}\topp2}  = \esp\bb{\esp\pp{\varepsilon\topp1_1\varepsilon\topp1_{1+\ell_1}\mmid q_1}\esp\pp{\varepsilon\topp2_1\varepsilon\topp2_{1+\ell_2}\mmid q_2}}\\
&= \esp\bb{(2q_1-1)^{|\ell_1|}(2q_2-1)^{|\ell_2|}} = \int(1-u_1)^{|\ell_1|}(1-u_2)^{|\ell_2|}\mu^*(d\vv u).
\end{align*}
Consider
\[
\what r(\vv\theta)  = \sum_{\vvl\in\Z^2}r(\vvl)e^{i\ip{\vvl,\vv\theta}} = \sum_{\vvl\in\Z^2} \int e^{i\ip{\vvl,\vv\theta}}(1-u_1)^{|\ell_1|}(1-u_2)^{|\ell_2|}\mu^*(d\vv u).
\]
Recall that
\[
\sum_{\ell\in\Z}\rho^{|\ell|}e^{i\ell\theta} = \frac{1-\rho^2}{1-2\rho\cos \theta+\rho^2}, \mfa \rho\in(-1,1).
\]
So~\eqref{eq:r_Fourier} follows.

Now we investigate the asymptotics of $\what r$. Recall that we let $\mu^*$ denote the measure on $(0,1]^2$ induced by $\vv U$.
It turns out to be  convenient to  work with polar coordinates. For this purpose, introduce
\[
 T_{\vv\alpha}(\vv x):=\pp{\frac1{x_1^{\alpha_1}},\frac1{x_2^{\alpha_2}}}.
\]
So $\mu^*\circ  T_{\vv\alpha}\inv$ is the measure on $[1,\infty)^2$ induced by $R\vv W$, and for any measurable function $f:\R_+^2\to\R$,
\equh\label{eq:RW}
\int_{T_{\vv\alpha}((0,1]^2)} f(\vv x)\mu^*\circ T_{\vv\alpha}\inv(d\vv x) = \int_1^\infty\int_{\Delta_1}\inddd{r\vvw\in T_{\vv\alpha}((0,1]^2)}f(r\vvw)\Lambda(d\vvw)r^{-2}dr,
\eque
provided the integrability of either side can be justified.
We treat $\vv U = (1,1)$ and $\vv U\in(0,1)^2$ separately, and write
\begin{align*}
\what r(\vv\theta/\vvn) & = \int_{T_{\vv\alpha}((0,1]^2)}G^*\pp{T_{\vv\alpha}\inv(\vvu),\vv\theta/\vvn}\mu^*\circ T_{\vv\alpha}\inv(d\vvu) + \proba(\vv U = (1,1))
\\&=: \what r_1(\vv\theta/\vvn)+\proba(\vv U = (1,1)).
\end{align*}
We shall see eventually that $\what r_1(\vv\theta/\vvn)$ is of order $|\vvn|/n_1^{\alpha_1}$, so $\what r(\vv\theta/\vvn)\sim\what r_1(\vv\theta/\vvn)$. We focus on $\what r_1(\vv\theta/\vvn)$ from now on. Recall that $n^* = n_1^{\alpha_1}$. Note that
\begin{align}
\what r_1(\vv\theta/\vvn) &= \int_1^\infty\int_{\Delta_1}\inddd{r\vvw\in T_{\vv\alpha}((0,1]^2)}G^*\pp{T_{\vv\alpha}\inv(r\vvw),\vv\theta/\vvn}\Lambda(d\vvw)r^{-2}dr
\nonumber\\
& = \frac1{n^*}\int_0^\infty \int_{\Delta_1}\inddd{n^*r\vvw\in  T_{\vv\alpha}((0,1]^2)}G^*\pp{T_{\vv\alpha}\inv(n^*r\vvw),\vv\theta/\vvn}\Lambda(d\vvw)r^{-2}dr.\label{eq:Phi_n}
\end{align}
In the last line above, we first applied a change of variables, and then replaced  $\int_{1/n^*}^\infty$ by  $\int_0^\infty$, as the constraint $n^*r\vvw\in T_{\vv\alpha}((0,1)^2)$ implies that $r\ge (n^*w_k)\inv\ge (n^*)\inv$. Introduce
\[
h_{\vvn}(r,\vvw,\vv\theta) :=G^*\pp{T_{\vv\alpha}\inv (n^*r\vvw),\vv\theta/\vvn} \inddd{n^*r\vvw\in T_{\vv\alpha}((0,1)^2)}.
\]
In view of integral expressions~\eqref{eq:Psi} and~\eqref{eq:Phi_n}, to show the first part of the lemma we need to prove, for
\[
h(r,\vvw,\vv\theta):= \prodd k12 \frac{2(rw_k)^{-1/\alpha_k}}{(rw_k)^{-2/\alpha_k}+\theta_k^2},
\]
that
\begin{multline}\label{eq:hnK}
\frac{n^*}{|\vvn|}\what r_1(\vv\theta/\vvn) \equiv  \frac1{|\vvn|}\int_0^\infty \int_{\Delta_1} h_{\vvn}(r,\vvw,\vv\theta)\Lambda(d\vvw)r^{-2}dr\\
\to\Psi_{\vv\alpha,\Lambda}(\vv\theta) \equiv \int_0^\infty \int_{\Delta_1}h(r,\vvw,\vv\theta)\Lambda(d\vvw)r^{-2}dr \in(0,\infty) \mmas \vvn\to\infty.
\end{multline}
For this purpose, we show, for any $\delta\in(0,1)$,
\begin{multline}\label{eq:DCT_delta}
\lim_{\vvn\to\infty} \frac1{|\vvn|}\int_0^\infty\int_{\Delta_1}h_{\vvn}(r,\vvw,\vv\theta)\indd{n^*r\vvw\in T_{\vv\alpha}((0,\delta]^2)}\Lambda(d\vvw)r^{-2}dr \\
= \int_0^\infty \int_{\Delta_1}h(r,\vvw,\vv\theta)\Lambda(d\vvw)r^{-2}dr,
\end{multline}
and for some constant $C$ independent of $\vv\theta$,
\begin{multline}\label{eq:lower_order}
\int_0^\infty\int_{\Delta_1}h_\vvn(r,\vvw,\vv\theta)\indd{n^*r\vvw\in T_{\vv\alpha}((0,1]^2\setminus(0,\delta]^2)}\Lambda(d\vvw)r^{-2}dr\\ \le C(n_1|\theta_1|^{\alpha_1-1}+n_2|\theta_2|^{\alpha_2-1}+n^*).
\end{multline}
Thus the integral in \eqref{eq:lower_order} does not contribute in the limit, since $|\vvn|\sim (n^*)^{p(\vv\alpha)}$ as $\vvn\to\infty$ and $p(\vv\alpha)>1$.

We first show~\eqref{eq:DCT_delta}.
By the definition of $G^*$, we have
\begin{align*}
 h_{\vvn,\delta}(r,\vvw,\vv\theta)& := h_\vvn(r,\vvw,\vv\theta)\inddd{n^*r\vvw\in T_{\vv\alpha}((0,\delta]^2)} =G^*\pp{T_{\vv\alpha}\inv (n^*r\vvw),\vv\theta/\vvn} \inddd{n^*r\vvw\in T_{\vv\alpha}((0,\delta]^2)}\\
&  = \prodd k12 g\pp{(n^*rw_k)^{-1/\alpha_k},\theta_k/n_k}\inddd{n^*r\vvw\in T_{\vv\alpha}((0,\delta]^2)}
\end{align*}
with
\[
g(u,\theta) := \frac{u(2-u)}{u^2+2(1-u)(1-\cos\theta)}.
\]
It is clear that
for every $r>0, \vvw\in\Delta_1$, $\vv\theta\in\R_o^2$,
\[
\lim_{\vvn\to\infty} \frac1{|\vvn|}h_{\vvn}(r,\vvw,\vv\theta)  = h(r,\vvw,\vv\theta).
\]
So to prove~\eqref{eq:DCT_delta}, by the dominated convergence theorem it remains to find an integrable upper bound of $h_{\vvn,\delta}/|\vvn|$. For this purpose,
observe that by the trivial bound $g(u,\theta)\le 2u\inv$,
\equh\label{eq:g_bound1}
g\pp{(n^*rw)^{-1/\alpha_k},\theta_k/n_k}\le 2(n^*rw_k)^{1/\alpha_k},
\eque
 and that, recalling the fact $2(1-\cos\theta)= 4\sin^2(\theta/2)\ge 4\theta^2/\pi^2$ for $\theta\in(-\pi,\pi)$,
  \equh\label{eq:<delta}
 g\pp{(n^*rw_k)^{-1/\alpha_k},\theta_k/n_k}\inddd{(n^*rw_k)^{-1/\alpha_k}\in(0,\delta)}\le \frac{(n^*rw_k)^{-1/\alpha_k}}{2(1-\delta)\theta_k^2n_k^{-2}/\pi^2} = \frac {C_\delta n_k}{(rw_k)^{1/\alpha_k}\theta_k^2}
 \eque
 for some constant $C_\delta$ depending only on $\delta$. Here we used the fact that there exists universal constants
 $c_1,c_2$ such that $c_1n_2^{\alpha_2}\le n^*\le c_2n_2^{\alpha_2}$ for the sequence $\vvn$ of our interest.
 Therefore,
\[
\frac1{|\vvn|}h_{\vvn,\delta}(r,\vvw,\vv\theta) \le C_\delta\prodd k12 \min\ccbb{(rw_k)^{1/\alpha_k},\frac1{(rw_k)^{1/\alpha_k}\theta_k^2}} =:C_\delta\wb h(r,\vvw,\vv\theta).
\]
We now show that $\iint \wb h(r,\vvw,\vv\theta)\Lambda(d\vvw)r^{-2}dr<\infty$. Introduce
$$
d(\vv \theta, \vvw, \vv \alpha):=|\theta_1\theta_2|
^{-1/p(\vv\alpha)}
w_1^{-\alpha_2/(\alpha_1+\alpha_2)}w_2^{-\alpha_1/(\alpha_1+\alpha_2)}.
$$
Then
\begin{multline*}
\iint \wb h(r,\vvw,\vv\theta)\Lambda(d\vvw)r^{-2}dr\leq \int_{\Delta_1}\int_0^{d(\vv\theta, \vvw, \vv \alpha)}r^{p(\vv\alpha)-2}drw_1^{1/\alpha_1}w_2^{1/\alpha_2}\Lambda(d\vvw)\\
+\int_{\Delta_1}\int_{d(\vv\theta, \vvw, \vv \alpha)}^\infty r^{-p(\vv\alpha)-2}drw_1^{-1/\alpha_1}w_2^{-1/\alpha_2}\frac{1}{|\theta_1\theta_2|}\Lambda(d\vvw).
\end{multline*}
It can be easily verified that each double integral on the right-hand side above is bounded by
$$
C|\theta_1\theta_2|
^{1/p(\vv\alpha)-1}
\int_{\Delta_1}w_1^{\alpha_2/(\alpha_1+\alpha_2)}w_2^{\alpha_1/(\alpha_1+\alpha_2)}\Lambda(d\vvw)\leq C|\theta_1\theta_2|
^{1/p(\vv\alpha)-1}.
$$
Namely,  there exists a constant $C$ depending only on $\Lambda,\alpha_1,\alpha_2$, such that
\[
\int_{\R_+\times\Delta_1}\wb h(r,\vvw,\vv\theta)\Lambda(d\vvw)r^{-2}dr\le C{|\theta_1\theta_2|^{1/p(\vv\alpha)-1}}.
\]
So we have proved~\eqref{eq:DCT_delta} and
\equh\label{eq:hn_delta}
\int_0^\infty\int_{\Delta_1}h_{\vvn,\delta}(r,\vvw,\vv\theta)\Lambda(d\vvw)r^{-2}dr \le C|\vvn||\theta_1\theta_2|^{1/p(\vv\alpha)-1}.
\eque
Now we prove \eqref{eq:lower_order}. We shall divide the region $\{n^*r\vvw\in T_{\vv\alpha}((0,1]^2\setminus (0,\delta]^2)\}$ into three pieces and treat each corresponding integral respectively. First,
for $u\in(\delta,1]$, we have $g(u,\theta)\le 2/\delta^2$, and hence
\[
h_\vvn(r,\vvw,\vv\theta)\indd{n^*r\vvw\in T_{\vv\alpha}((\delta,1]^2)}\le C_\delta.
\]
 Thus,
\begin{align}
\int_0^\infty& \int_{\Delta_1}  h_\vvn(r,\vvw,\vv\theta)\indd{n^*r\vvw\in T_{\vv\alpha}((\delta,1]^2)}\Lambda(d\vvw)r^{-2}dr  \nonumber\\
& \le C_\delta \int_{\Delta_1}\int_{(n^*)\inv(w_1\inv\vee w_2\inv)}^{(n^*)\inv[(w_1\delta^{\alpha_1})\inv\wedge (w_2\delta^{\alpha_2})\inv]}  r^{-2}dr\Lambda(d\vvw) \le C_\delta n^* \int_{\Delta_1} w_1\wedge w_2\Lambda(d\vvw)\nonumber \\
& \le C_\delta n^*,\label{eq:hn_delta1}
\end{align}
Similarly,
\begin{align*}
h_{\vvn}(r,\vvw,\vv\theta)\inddd{n^*r\vv w\in T_{\vv\alpha}((0,\delta]\times(\delta,1])} & \le C_\delta g\pp{(n^*rw_1)^{-
1/\alpha_1},\frac{\theta_1}{n_1}}\inddd{n^*rw_1
\geq\delta^{-\alpha_1}}\\
& \le C_\delta n_1\min\ccbb{(rw_1)^{1/\alpha_1},\frac1{(rw_1)^{1/\alpha_1}\theta_1^2}}.
\end{align*}
This time, taking $d_1(\theta,w,\alpha) := |\theta|^{-\alpha}w\inv$,
and writing
\[
\int_{\Delta_1}\int_0^\infty = \int_{\Delta_1}\pp{\int_0^{d_1(\theta_1,w_1,\alpha_1)} + \int_{d_1(\theta_1,w_1,\alpha_1)}^\infty},
\] we have
\equh\label{eq:hn_delta2}
\int_0^\infty \int_{\Delta_1}  h_\vvn(r,\vvw,\vv\theta)\inddd{n^*r\vvw\in T_{\vv\alpha}((0,\delta]\times (\delta,1])}\Lambda(d\vvw)r^{-2}dr  \le C_\delta n_1|\theta_1|^{\alpha_1-1}\int_{\Delta_1}w_1\Lambda(d\vvw).
\eque
By symmetry, a similar bound holds for the left-hand side above with the indicator function replaced by $\inddd{n^*r\vv w\in T_{\vv\alpha}((\delta,1]\times(0,\delta])}$. Now \eqref{eq:bound_rhat} follows from combining \eqref{eq:hnK}, \eqref{eq:hn_delta}, \eqref{eq:hn_delta1} and \eqref{eq:hn_delta2}.
\end{proof}

\begin{proof}[Proof of Proposition~\ref{prop:cov_I_AD}]
Recall~\eqref{eq:cov_DCT}.
By Lemmas~\ref{lem:D} and~\ref{lem:r_hat}, the dominated convergence theorem yields the first part of the proposition, and that the integral in~\eqref{eq:cov_G} is finite (recalling the assumption that $\alpha_1,\alpha_2\in(0,2)$). The details are omitted. For the second part, it also follows from Lemma~\ref{lem:r_hat},
\eqref{eq:cov_DCT}
 and~\eqref{eq:D_upper} that
\begin{multline}
\esp S_\vvn(\vvt)^2
\le  C|\vvn|\int_{\vvn\cdot(-\pi,\pi)^2}\prodd k12\min\ccbb{t_k^2,\frac1{|\theta_k|^2}} \\
\times
\pp{\frac{|\vv n|}{n^*}|\theta_1\theta_2|^{1/p(\vv\alpha)-1} + \frac{n_1}{n^*}|\theta_1|^{\alpha_1-1} +\frac{n_2}{n^*}|\theta_2|^{\alpha_2-1} + 1}d\vv\theta.\label{eq:Snt2ndM}
\end{multline}
By change of variables, the above integral is bounded by
\[t_1t_2\int_{\R^2}\prodd k12\min\ccbb{1,\frac1{\theta_k^2}}\pp{\frac{|\vv n|}{n^*}\abs{\frac{\theta_1\theta_2}{t_1t_2}}^{1/p(\vv\alpha)-1} + \frac{n_1}{n^*}\abs{\frac{\theta_1}{t_1}}^{\alpha_1-1} +\frac{n_2}{n^*}\abs{\frac{\theta_2}{t_2}}^{\alpha_2-1} + 1}d\vv\theta.
\]
 Therefore, it follows that, for a constant $C$ independent of $n$ and $\vvt$,
\[
\esp S_\vvn(\vvt)^2 \le C\pp{\frac{|\vvn|^2}{n^*}(t_1t_2)^{2-1/p(\vv\alpha)} + \frac{n_1{|\vvn|}}{n^*}t_1^{2-\alpha_1}t_2 + \frac{n_2{|\vvn|}}{n^*}t_1t_2^{2-\alpha_2}+ |\vvn|t_1t_2}.\]
For $\vvn\in\N^2$ and $\vvt\in[0,1]^2$ such that $\vvn\cdot\vvt = \sfloor{\vvn\cdot\vvt}$, we have
\begin{align*}
\frac{n_1|\vv n|}{n^*}t_1^{2-\alpha_1}t_2 & = \frac{n_1^2}{n^*}t_1^{2-\alpha_1} \cdot n_2t_2\le C(n_1t_1)^{2-\alpha_1}(n_2t_2)^{2-1/p(\vv\alpha)} \\
& \le C(|\vvn|t_1t_2)^{2-1/p(\vv\alpha)} \le C\frac{|\vvn|^2}{n^*}(t_1t_2)^{2-1/p(\vv\alpha)}
\end{align*}
and
\[
|\vvn|t_1t_2 \le |\vvn|^{2-1/p(\vv\alpha)}(t_1t_2)^{2-1/p(\vv\alpha)} \le C\frac{|\vvn|^2}{n^*}(t_1t_2)^{2-1/p(\vv\alpha)},
\]
where we used the assumption \eqref{eq:n*}.
We have thus obtained the second part of the proposition.
\end{proof}

The second estimate that we need  is on the fourth moment.
\begin{Prop}\label{prop:typeI_4th}
There exists a constant $C$ such that
\[
\esp S_\vvn(\vvt)^{4}\le C \frac{|\vvn|^4}{n^*}(t_1t_2)^{4-1/p(\vv\alpha)}
\]
for all $\vvn\in\N^2, \vvt\in[0,1]^2$, such that $\floor{\vvn\cdot\vv t} = \vvn\cdot\vvt$.
\end{Prop}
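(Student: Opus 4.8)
The plan is to reduce the two-dimensional fourth moment to a one-dimensional computation. By the product structure $S_\vvn(\vvt)=S\topp1_{m_1}S\topp2_{m_2}$ with $m_k=\floor{n_kt_k}=n_kt_k$ (as in the proof of Lemma~\ref{lem:1}) and the conditional independence of $\varepsilon\topp1,\varepsilon\topp2$ given $\vv q$, writing $V_m(q):=\esp(S_m^4\mid q)$ and using that the law of $\varepsilon\topp k$ depends only on $q_k$, we get
\[
\esp S_\vvn(\vvt)^4 = \esp\bb{V_{m_1}(q_1)\,V_{m_2}(q_2)}.
\]
Everything then rests on a pointwise (in $q$) bound for the conditional fourth moment of a single Enriquez walk.

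I would prove the uniform comparison $V_m(q)\le 3\,\phi_m(q)^2$, where $\phi_m(q):=\esp(S_m^2\mid q)=\sum_{i,j=1}^m\rho^{|i-j|}$ and $\rho=2q-1=1-U$. Representing $\varepsilon_j=\varepsilon_1\prod_{\ell=1}^{j-1}\xi_\ell$ with i.i.d.\ $\pm1$ factors $\xi_\ell$, $\esp\xi_\ell=\rho$, one computes for sorted indices $j_1\le j_2\le j_3\le j_4$ that $\esp(\varepsilon_{j_1}\varepsilon_{j_2}\varepsilon_{j_3}\varepsilon_{j_4}\mid q)=\rho^{(j_2-j_1)+(j_4-j_3)}$, so every ordered $4$-tuple contributes the weight of the nearest-neighbour pairing. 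Among the three pairings of four points on a line this one has the smallest total distance, hence (as $\rho\in[0,1)$) the largest weight; since $\phi_m(q)^2$ sums all three pairing weights, each with a fixed positive fraction of the multiplicity, a term-by-term comparison over multisets yields $V_m\le 3\phi_m^2$. A geometric summation gives $\phi_m(q)\le 2\min\sccbb{m^2,m/U}$, and combining these bounds with the displayed identity produces
\[
\esp S_\vvn(\vvt)^4 \le C\,\esp\bb{\prodd k12\min\ccbb{m_k^4,\,m_k^2/U_k^2}}.
\]

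It remains to bound this expectation, for which I would reuse the polar-coordinate machinery of Lemma~\ref{lem:r_hat}. The key simplification is the elementary inequality $\prodd k12\min\ccbb{A_k,B_k}\le\min\ccbb{A_1A_2,\,B_1B_2}$. Writing $\min\ccbb{m_k^4,m_k^2/U_k^2}=m_k^2\min\ccbb{m_k^2,1/U_k^2}$ and treating the atom $\sccbb{\vv U=(1,1)}$ separately (it contributes at most $(m_1m_2)^2$, of lower order than the final bound), on $(0,1)^2$ we substitute $1/U_k^2=(RW_k)^{2/\alpha_k}$ and apply~\eqref{eq:RW} to obtain, after this inequality, the single-min integral
\[
(m_1m_2)^2\int_1^\infty\int_{\Delta_1}\min\ccbb{(m_1m_2)^2,\ r^{2p(\vv\alpha)}w_1^{2/\alpha_1}w_2^{2/\alpha_2}}\Lambda(d\vvw)\,r^{-2}dr.
\]
Splitting the $r$-integral at the single crossover $r_*$ where the two arguments coincide, and using $p(\vv\alpha)>1$ so that the increasing branch is dominated at $r_*$ while the decreasing branch $(m_1m_2)^2r^{-2}$ is integrable and also dominated at $r_*$, the $r$-integral is $\le C\,((m_1m_2)^2)^{1-1/(2p(\vv\alpha))}(w_1^{2/\alpha_1}w_2^{2/\alpha_2})^{1/(2p(\vv\alpha))}$; the angular integral against $\Lambda$ converges since the resulting $\vvw$-exponents are positive. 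This yields $\esp S_\vvn(\vvt)^4\le C(m_1m_2)^{4-1/p(\vv\alpha)}$.

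Finally, the critical-speed assumption~\eqref{eq:n*} enters only to convert this symmetric bound into the stated anisotropic one: since $|\vvn|=(n^*)^{p(\vv\alpha)}$, we have $|\vvn|^{1/p(\vv\alpha)}=n^*$, whence $(m_1m_2)^{4-1/p(\vv\alpha)}=|\vvn|^{4-1/p(\vv\alpha)}(t_1t_2)^{4-1/p(\vv\alpha)}=\frac{|\vvn|^4}{n^*}(t_1t_2)^{4-1/p(\vv\alpha)}$, as required. The one genuinely non-routine step is the uniform one-dimensional comparison $V_m(q)\le 3\phi_m(q)^2$, together with its insensitivity to $q\to1$; once this is in hand, the single-min inequality reduces the remaining two-dimensional integral to a one-crossover computation essentially identical to the one carried out for $\what r$ in Lemma~\ref{lem:r_hat}, and no case analysis in $\vvw$ is needed.
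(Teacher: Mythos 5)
Your proof is correct, but it takes a genuinely different route from the paper's. The paper stays in the frequency domain: it bounds the conditional fourth moment by $C\sum_{\ell}(2q-1)^{|\ell|}|\ell|(n-|\ell|)^2$, Fourier-transforms this into a kernel $J_n$, proves the estimate $|J_{\sfloor{nt}}(\theta/n)|\le Cn^4t^2\min\{t^2,1/\theta^2\}$ of \eqref{eq:Hnp_DCT} by an inductive summation-by-parts argument, and then recycles the bound \eqref{eq:bound_rhat} on $\what r$ together with the integral estimate already carried out for the second moment in \eqref{eq:Snt2ndM}. You instead work in the time domain: the Wick-type domination $\esp_q S_m^4\le 3\,(\esp_q S_m^2)^2$ reduces everything to the conditional second moment $\phi_m=\esp_q S_m^2$, for which the elementary bound $2\min\{m^2,m/U\}$ suffices, and the remaining expectation over $(R,\vv W)$ is a single-crossover polar integral that reproduces exactly the exponent $4-1/p(\vv\alpha)$. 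Your route buys brevity and avoids the kernel $J$ entirely, at the price of using the positivity of the correlation ($q\ge 1/2$, hence $\rho=2q-1\in[0,1)$) in an essential way; the paper's route is heavier but runs in parallel with the covariance computation and is reused verbatim in the non-critical regimes of the Supplementary Material. Two small points to tighten. First, your justification of the factor $3$ via ``a fixed positive fraction of the multiplicity'' is imprecise for multisets with repeated indices (the three pairings do not each receive one third of the orderings in that case); the clean argument is $\rho^{d_{\rm NN}}=\max_P\rho^{d_P}\le\sum_P\rho^{d_P}$ over the three pairings $P$ of positions, summed over all ordered $4$-tuples, after which each pairing sum factorizes into $\phi_m^2$ by Fubini. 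Second, note explicitly that the critical-speed assumption \eqref{eq:n*} enters only in the very last line, to convert $|\vvn|^{-1/p(\vv\alpha)}$ into $C/n^*$ uniformly along the sequence under consideration; this matches the role it plays in the paper's own proof.
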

\begin{proof}
Writing $\esp_q(\cdot) = \esp(\cdot\mid q)$, we have
\[
\esp S_\vvn(\vvt)^{4}=\esp\pp{\sum_{\vvi\in[\vv1,\vvn\cdot\vvt]}X_\vvi}^{4} = \esp\bb{\prodd k12\esp_{q_k}\pp{\summ{i_k}1{\sfloor{n_kt_k}}\varepsilon_{i_k}\topp k}^{4}}.
\]
Note that, for $\indn\varepsilon$ from the one-dimensional Enriquez model with random parameter $q$, $\esp_qS_n^{4} \le C\sum_{1\le i_1\le\cdots\le i_{4}\le n}\esp_q(\varepsilon_{i_1}\cdots\varepsilon_{i_{4}})$, and
\begin{multline*}
 \sum_{1\le i_1\le \cdots\le i_{4}\le n}  \esp_q\pp{\varepsilon_{i_1}\cdots\varepsilon_{i_{4}}}  = \sum_{\substack{j_1,j_2\ge 0,
 k_1,k_2\ge 0\\j_1 +j_2 +k_1+k_2\le n-1}}(2q-1)^{j_1+j_2}\\
  = \summ \ell 0{n-1} (2q-1)^\ell {\sum_{\substack{j_1,j_2\ge0\\ j_1+j_2 = \ell}}\sum_{\substack{k_1,k_2\ge 0\\k_1+k_2\le n-1-\ell}}1}  = \summ \ell 0{n-1}  (2q-1)^\ell \binom {\ell+1}{1}\binom{n+1-\ell}2.
\end{multline*}
So, for some constant $C$,
\[
\sum_{1\le i_1\le \cdots\le i_{4}\le n}  \esp_q\pp{\varepsilon_{i_1}\cdots\varepsilon_{i_{4}}}
\le C \summ\ell{-n}n(2q-1)^{|\ell|} |\ell|(n-|\ell|)^2.
\]
Thus,
\begin{align*}
\esp S_\vvn(\vvt)^{4} & \le C\int\prodd k12{\sum_{\ell_k=-\sfloor{n_kt_k}}^{\sfloor{n_kt_k}}\bb{|\ell_k|(\sfloor{n_kt_k}-|\ell_k|)^2(2q_k-1)^{|\ell_k|}}}\mu(d\vv q)\nonumber\\
& =C\int\sum_{\vv\ell\in[-\vvn\cdot\vv t,\vvn\cdot\vvt]}\prodd k12\bb{|\ell_k|(\sfloor{n_kt_k}-|\ell_k|)^2(2q_k-1)^{|\ell_k|}}\mu(d\vv q).
\end{align*}
Introduce
\[
J^{*}_\vvn(\vv\theta):=\sum_{\vv\ell\in[-\vvn,\vvn]}\prodd k12\bb{|\ell_k|(n_k-|\ell_k|)^2}e^{i\ip{\vvl,\vv\theta}}
= \prodd k12 J_{n_k}(\theta_k),\vvn\in\N^2,
\]
with
\[
J_n(\theta):= \sum_{\ell=-n}^n |\ell|(n-|\ell|)^2e^{i\ell\theta}.
\]
In summary,
\begin{align}
\esp S_\vvn(\vvt)^4 & \le  C\sum_{\vv\ell\in[-\vvn\cdot\vvt,\vvn\cdot\vvt]}\prodd k12\bb{|\ell_k|(\sfloor{n_kt_k}-|\ell_k|)^2}\int\prodd k12 (1-u_k)^{|\ell_k|}\mu^*(d\vvu)\nonumber\\
& = \frac C{(2\pi)^2}\int_{(-\pi,\pi)^2}J^{*}_{\floor{\vvn\cdot\vvt}}(\vv\theta) \what r(\vv\theta)d\vv\theta \nonumber\\
& =
 \frac{C}{|\vvn|(2\pi)^2}\int_{\vvn\cdot(-\pi,\pi)^2}J^{*}_{\floor{\vvn\cdot\vvt}}(\vv\theta/\vvn) \what r(\vv\theta/\vvn)d\vv\theta.\label{eq:4th_1}
\end{align}
Now we establish the following bound: for some constant $C$,
\equh\label{eq:Hnp_DCT}
\abs{J_{\sfloor{nt}}\pp{\frac\theta n}}\le Cn^{4}t^2\min\ccbb{t^2,\frac1{|\theta|^2}} \mfa n\in\N, t\in[0,1], \theta\in(-n\pi,n\pi).
\eque
Observe that this bound and Lemma~\ref{lem:r_hat} yield the desired result.

To show~\eqref{eq:Hnp_DCT},
write
\[
J_n\pp{\theta} = 2{\rm Re}(W_n(\theta)) \qmwith W_n(\theta):= \summ \ell1n(n-\ell)^2\ell e^{i{\ell\theta} }.
\]
So
\begin{align*}
W_n(\theta) - e^{i\theta}W_n(\theta) & = \summ\ell1n \bb{(n-\ell)^2\ell - (n-\ell+1)^2(\ell-1)}e^{i{\ell\theta}}\\
& = \summ\ell1n\summ{p_1}02\sum_{p_2=0}^{2-p_1}c_{p_1,p_2}n^{p_1}\ell^{p_2}e^{i\ell\theta},
\end{align*}
for some constants $c_{p_1,p_2}$ independent of $n$ and $\theta$. Write $m = \floor{nt} \le n$. Then,
\begin{align}
\abs{J_m\pp{\frac\theta n}}
& \le \frac2{|1-e^{i\theta/n}|}\abs{\summ\ell1{m}\summ {p_1}02\sum_{p_2=0}^{2-p_1}c_{p_1,p_2}m^{p_1}\ell^{p_2}e^{i\ell\theta/n}}
\nonumber\\
& \le C\frac n{|\theta|}\summ {p_1}02m^{p_1}\sum_{p_2=0}^{2-p_1}\abs{\summ\ell1{m}\ell^{p_2}e^{i\ell\theta/n}}= C\frac n{|\theta|}\summ{p_1}02m^{p_1}\sum_{p_2=0}^{2-p_1}\abs{V_{m,p_2+1}\pp{\frac\theta n}}\label{eq:H}
\end{align}
with
\[
V_{n,k}(\theta):= \summ\ell1n\ell^{k-1}e^{i\ell\theta}.
\]
Similarly as above, we have
\[
V_{n,k+1}(\theta) = \frac1{1-e^{i\theta}}\pp{\summ\ell1n\summ j0{k-1}c_{j,k}\ell^je^{i\ell\theta}-n^ke^{i(n+1)\theta}}
\]
for some constants $c_{j,k}$.
So
\[
\abs{V_{m,k+1}\pp{\frac\theta n}}
 \le \frac {Cn}{|\theta|}\pp{\summ {j}0{k-1}\abs{\summ\ell1m\ell^je^{i\ell\theta/n}}+m^k} = \frac {Cn}{|\theta|}\pp{\summ j1k\abs{V_{m,j}\pp{\frac\theta n}}+m^k}.
\]
At the same time, $|V_{m,k+1}(\theta/n)|\le m^{k+1}$.
We have seen in~\eqref{eq:D} that, for $m = \floor{nt}\le n$,
\[
\abs{V_{m,1}\pp{\frac\theta n}} \le n\pi \min\ccbb{\frac mn,\frac1{|\theta|}}.
\]
So by induction,
we arrive at
$$
\abs{V_{m,k+1}\pp{\frac\theta n}} \le C_knm^{k}\min\ccbb{\frac mn,\frac1{|\theta|}}, k\in\N,
$$
where $C_k$ are constants depending on $k$. Hence by taking the maximum among $(C_k)_{k=0,1,2}$, we have
$$
\abs{V_{m,k+1}\pp{\frac\theta n}} \le Cnm^{k}\min\ccbb{\frac mn,\frac1{|\theta|}}, k=0,1,2.
$$
Applying this to~\eqref{eq:H} leads to
\begin{equation}\label{eq:boundH}
\abs{J_m\pp{\frac\theta n}} \le C\frac n{|\theta|}\sum_{p_1=0}^2m^{p_1}\sum_{p_2=0}^{2-p_1}(nm^{p_2})\min\ccbb{\frac mn,\frac1{|\theta|}}\le C\frac{n^2m^2}{|\theta|}\min\ccbb{\frac mn,\frac1{|\theta|}}.
\end{equation}
Note also that  $|W_m(\theta)|\le m^4$. We have thus
 proved~\eqref{eq:Hnp_DCT}. Then,
 \[
J_{\sfloor{\vvn\cdot\vvt}}^*(\vv\theta/\vvn) \le C{|\vvn|^4}(t_1t_2)^2\prodd k12\min\ccbb{t_k^2,\frac1{|\theta_k|^2}}.
 \]
Applying this and
\eqref{eq:bound_rhat}
to~\eqref{eq:4th_1},
one has
\begin{align*}
\esp S_\vvn(\vv t)^4 &
\le C |\vvn|^3(t_1t_2)^2 \int\prodd k12 \min\ccbb{t_k^2,\frac1{|\theta_k|^2}}
\\
& \quad \quad \times \pp{\frac{|\vv n|}{n^*}|\theta_1\theta_2|^{1/p(\vv\alpha)-1} + \frac{n_1}{n^*}|\theta_1|^{\alpha_1-1} +\frac{n_2}{n^*}|\theta_2|^{\alpha_2-1} + 1}d\vv\theta
\\
& \le C{|\vvn|^2}(t_1t_2)^2 \cdot \frac{|\vvn|^2}{n^*}|t_1t_2|^{2-1/p(\vv\alpha)},
\end{align*}
where the upper bound for the integral has been treated as in \eqref{eq:Snt2ndM}.
The desired result now follows. \end{proof}
\begin{proof}[Proof of Theorem~\ref{thm:2}]
The proof follows the same line of the proof of Theorem~\ref{thm:1}.
First we establish the finite-dimensional convergence by applying Lindeberg--Feller central limit theorem. The asymptotic covariance of the aggregated random field is the same as the asymptotic covariance of a single random field, up to appropriate normalization, since
\[
\cov(\what S_\vvn(\vvs),\what S_\vvn(\vvt)) = m(\vvn)\cov(S_\vvn(\vvs),S_\vvn(\vvt)).
\]
The latter is established in Proposition~\ref{prop:cov_I_AD}. It remains to verify the counterpart here of the Lindeberg--Feller condition~\eqref{eq:LF2}, which requires the fourth moment on $\esp S_\vvn(\vv t)^4$ established in Proposition~\ref{prop:typeI_4th}. In this case we have, for
$$
Y_\vvn:= \frac1{|\vvn|(n^*)^{-1/2}} \summ w1d a_wS_\vvn(\vvt_w),
$$
\begin{align*}
\esp \pp{Y_\vvn^2\inddd{Y_\vvn^2>m(\vvn)\eta}} & \leq \frac{\esp Y_\vvn^4}{m(\vvn)\eta}\le
 \frac1{m(\vvn)\eta}\pp{\frac{(n^*)^{1/2}}{|\vvn|}\summ w1d |a_w| \pp{\esp S_\vvn(\vvt_w)^4}^{1/4}}^4\\
 & \leq \frac {C(n^*)^2}{m(\vvn)\eta |\vvn|^4}\summ w1d|a_w| \frac{|\vvn|^4}{n^*}= \frac C\eta\frac {n^*}{m(\vvn)},\\
\end{align*}
which converges to 0 as $\vvn\to\infty$ for all $\eta>0$ under condition \eqref{eq:m(n)_I_AD}.

The tightness follows from~\eqref{eq:bickel}, which implies the condition \eqref{eq:Bickel} introduced by \citet{bickel71convergence}.
\end{proof}

\section{Proof of Theorem \ref{thm:non_critical}}\label{sec:non_critical}
We start by explaining how to identify the limits of each regime of non-critical speed, and the corresponding orders of the normalizations. Taken such information for granted, one could  prove Theorem~\ref{thm:non_critical} directly by starting  from
the first section of the Supplementary Material. However, the identification of the
four regimes (essentially two due to symmetry) are at the core of the problem, and we explain this step first.
We also discuss the boundary case in
the last section of the Supplementary Material.

Again we start with computing the asymptotic covariance, which shall indicate the  normalization order and the limit Gaussian random field in each regime.
We still apply the Fourier transform, and Lemma \ref{lem:covariance} still holds:
\[
\cov(S_{\vvn}(\vvs),S_{\vvn}(\vvt))
 = \frac1{(2\pi)^2}\int_{(-\pi,\pi)^2}D_{\vvn,\vvs,\vvt}(\vv\theta)\,\what r\pp{\vv\theta}d\vv\theta.
\]
The evaluation of the asymptotics of the covariance in general depends on two changes of variables.
First, introduce change of variables
\[
\vv\theta\to \frac{\vv\theta}{\vvn'}:=\pp{\frac{\theta_1}{n_1'},\frac{\theta_2}{n_2'}} \qmwith \vvn' = (n_1',n_2').
\]
We have taken $\vvn' = \vvn$ in the regime of critical speed. Here, however, we may need to pick $\vvn'$ differently. So our starting point of analysis is the following expression of the covariance function of the random field:
\equh
\cov(S_{\vvn}(\vvs),S_{\vvn}(\vvt))
 = \frac{|\vvn'|\inv}{(2\pi)^2}\int_{\vvn'\cdot(-\pi,\pi)^2}D_{\vvn,\vvs,\vvt}(\vv\theta/\vvn')\,\what r\spp{\vv\theta/\vvn'}d\vv\theta.\label{eq:cov1'}
\eque
Next, we take a closer look at $\what r(\vv\theta/\vvn')$.
Recall
\[ g(u,\theta) = \frac{u(2-u)}{u^2+2(1-u)(1-\cos\theta)}.
\]
Then, we have, for $\vv\theta\in \vvn'\cdot(-\pi,\pi)^2$,
\begin{align}\label{eq:rhat}
\what r(\vv\theta/\vvn')  & = \int_{\Delta_1}\int_0^\infty \prodd k12 g\pp{(rw_k)^{-1/\alpha_k}, \theta_k/n'_k}\indd{r\vvw\in T_{\vv\alpha}((0,1]^2)}\frac{dr}{r^2}\Lambda(d\vvw)\\
& =
\frac 1{n^*}\int_{\Delta_1}\int_0^\infty \prodd k12 g\pp{(n^*rw_k)^{-1/\alpha_k}, {\theta_k}/{n'_k}}\indd{n^*r\vvw\in T_{\vv\alpha}((0,1]^2)}\frac{dr}{r^2}\Lambda(d\vvw),\nonumber
\end{align}
where $n^{*}$ is a scalar factor satisfying $n^*\to\infty$ as $\vvn\to\infty$
(the rate to be discussed later),
 and the last step follows by the change of variables
\[r\to n^* r.
\] So we can write the integral in~\eqref{eq:cov1'} as a multiple integral over $\R^2\times\R_+\times\Delta_1$ with respect to the measure $d\vv\theta r^{-2}dr\Lambda(d\vvw)$, with the integrand
\begin{multline*}
(n^*)^{-1}\prodd k12 D_{\sfloor{n_ks_k}}\pp{\frac{\theta_k}{n'_k}}\wb{D_{\sfloor{n_kt_k}}\pp{\frac{\theta_k}{n'_k}}}\\
\times \prodd k12g\pp{(n^*rw_k)^{-1/\alpha_k}, \frac{\theta_k}{n'_k}}\indd{n^*r\vvw\in T_{\vv\alpha}((0,1]^2)}\inddd{\vv\theta\in\vvn'\cdot(-\pi,\pi)^2}.
\end{multline*}
As before, pointwise asymptotics of $D$ and $g$ are straightforward. We have
\begin{multline}\label{eq:limit_D}
\limn (n_k)^{-2}D_{\sfloor{n_ks_k}}\pp{\frac{\theta_k}{n'_k}}\wb{D_{\sfloor{n_kt_k}}\pp{\frac{\theta_k}{n'_k}}} \\
=\mathfrak D_{s_k,t_k}(\theta_k):=\begin{cases}
\displaystyle \frac{(e^{is_k\theta_k}-1)\wb{(e^{it_k\theta_k}-1)}}{|\theta_k|^2} & n'_k\sim n_k\\
\displaystyle s_kt_k &  n_k' \gg n_k,
\end{cases}
\end{multline}
and
\equh\label{eq:limit_g}
\limn \frac{ g\pp{{(n^*r)^{-1/\alpha_k}}, {\theta_k}/{n'_k}} }{(n^*)^{1/\alpha_k}}
= \mathfrak g(r^{-1/\alpha_k},\theta_k) := \begin{cases}
\displaystyle\frac{2r^{-1/\alpha_k}}{r^{-2/\alpha_k}+\theta_k^2} & (n^*)^{1/\alpha_k} \sim n'_k\\
\displaystyle2r^{1/\alpha_k} & (n^*)^{1/\alpha_k} \ll n'_k.
\end{cases}
\eque
We shall choose $n_1', n_2'$ and $n^*$ as functions of $n_1$ or $n_2$.
In this way, combining~\eqref{eq:cov1'}, \eqref{eq:rhat}, \eqref{eq:limit_D} and~\eqref{eq:limit_g}, we have, {\em formally},
\begin{multline}
\lim_{\vvn\to\infty} \frac{|\vvn'|\cov(S_{\vvn}(\vvs),S_{\vvn}(\vvt))}{|\vvn|^2(n^*)^{p(\vv\alpha)-1}} \\
=  \frac1{(2\pi)^2} \int_{\Delta_1}\int_0^\infty\int_{\R^2}\prodd k12 \mathfrak D_{s_k,t_k}(\theta_k)\mathfrak g((rw_k)^{-1/\alpha_k},\theta_k)d\vv\theta \frac{dr}{r^2}\Lambda(d\vvw),\label{eq:cov3}
\end{multline}
where the functions $\mathfrak D$ and $\mathfrak g$ depend on the choice of $\vvn'$ and $n^*$, and we only computed the pointwise convergence of the multiple integral.

However, a careful examination shall tell quickly that not all choices of $\vvn'$ and $n^*$ will make~\eqref{eq:cov3} a legitimate statement, as the multiple integral is not always well defined: so we need those such that  the multiple integral in~\eqref{eq:cov3} is well defined, finite and strictly non-zero.
The first natural case to be considered is when both $\mathfrak D$ and $\mathfrak g$ are not degenerate, corresponding to the regime of critical speed already addressed in Theorem \ref{thm:2}, with
\[
\vvn' = \vvn, n^*\sim n_1^{\alpha_1}\sim n_2^{\alpha_2}.
\]
Then, it is not hard to see that the only other legitimate integrands  are
\begin{align}
& \prodd k12 \frac{(e^{is_k\theta_k}-1)\wb{(e^{it_k\theta_k}-1)}}{\theta_k^2} \frac{2(rw_1)^{-1/\alpha_1}}{(rw_1)^{-2/\alpha_1}+\theta_1^2}2(rw_2)^{1/\alpha_2},\nonumber\\
\label{eq:integrand_D2}
& \prodd k12 \frac{(e^{is_k\theta_k}-1)\wb{(e^{it_k\theta_k}-1)}}{\theta_k^2}2(rw_1)^{1/\alpha_1} \frac{2(rw_2)^{-1/\alpha_2}}{(rw_2)^{-2/\alpha_2}+\theta_2^2},
\\
\label{eq:integrand_g1}
& \frac{(e^{is_1\theta_1}-1)\wb{(e^{it_1\theta_1}-1)}}{\theta_1^2}s_2t_2\prodd k12 \frac{2(rw_k)^{-1/\alpha_k}}{(rw_k)^{-2/\alpha_k}+\theta_k^2},\\
\nonumber
& s_1t_1 \frac{(e^{is_2\theta_2}-1)\wb{(e^{it_2\theta_2}-1)}}{\theta_2^2}\prodd k12 \frac{2(rw_k)^{-1/\alpha_k}}{(rw_k)^{-2/\alpha_k}+\theta_k^2},
\end{align}
and they correspond to the following four conditions on $\vvn'$ and $n^*$, respectively
\begin{align}
\vvn' & = \vvn, n^*\sim n_1^{\alpha_1}, n_1^{\alpha_1}\ll n_2^{\alpha_2},\nonumber\\
\vvn' & = \vvn, n^*\sim n_2^{\alpha_2}, n_1^{\alpha_1}\gg n_2^{\alpha_2},\label{eq:D2}\\
\vvn' & \sim \spp{(n^*)^{1/\alpha_1},(n^*)^{1/\alpha_2}}, n^*\sim n_1^{\alpha_1}, n_1^{\alpha_1}\gg n_2^{\alpha_2},\nonumber\\
\vvn' & \sim \spp{(n^*)^{1/\alpha_1},(n^*)^{1/\alpha_2}}, n^*\sim n_2^{\alpha_2}, n_1^{\alpha_1}\ll n_2^{\alpha_2}.\nonumber
\end{align}
We shall also see later that, for each integrand above to be integrable, an extra assumption on $\vv\alpha$ is needed.

By symmetry, it suffices to focus on the case
\[
n_1^{\alpha_1}\gg n_2^{\alpha_2},
\]
which from now on we assume.
Two identities are needed in these regimes with non-critical speed. The first identity is on the covariance function of fractional Brownian motion (e.g.~\citep[Proposition 7.2.8]{samorodnitsky94stable})
\equh\label{eq:cov_fBm}
\int_\R\frac{(e^{is\theta}-1)\wb{(e^{it\theta}-1)}}{|\theta|^{1+2H}}d\theta = {2\pi}C_H\cov(\B^H_s,\B^H_t), s,t>0,H\in(0,1)
\eque
with
\[
C_H = \frac\pi{H\Gamma(2H)\sin(H\pi)}.
\]
The second is the following
\begin{multline}\label{eq:integrate_r}
\int_0^\infty \frac{r^{-\gamma}}{(rw)^{-2/\alpha}+\theta^2} dr = \frac\alpha2B\pp{H-\frac12,\frac32-H}\frac{w^{\gamma-1}}{|\theta|^{2H-1}} \\
\mbox{ if } H := \frac{3-\alpha(\gamma-1)}2 \in(1/2,3/2),
\end{multline}
and otherwise the integral is infinite. Here $B(a,b) = \int_0^1 x^{a-1}(1-x)^{b-1}dx$ is the beta function.
Indeed,  by change of variables, we have
\begin{align*}
\int_0^\infty\frac{r^{-\gamma}}{(rw)^{-2/\alpha}+\theta^2}dr &= w^{\gamma-1}|\theta|^{\alpha(\gamma-1)-2}\int_0^\infty\frac{r^{-\gamma}}{r^{-2/\alpha}+1}dr \\
& = \frac{w^{\gamma-1}}{|\theta|^{2-\alpha(\gamma-1)}} \frac\alpha 2\int_0^\infty \frac{r^{-(1+(1-\gamma)\alpha/2)}}{r+1}dr.
\end{align*}
Recall also that
$\int_0^\infty (1+u)^{-1}u^{-\beta}du = B(\beta,1-\beta) = \pi/\sin(\pi\beta)$ for all $\beta\in(0,1)$,
and otherwise the integral is infinite. Combining the above yields \eqref{eq:integrate_r}.

We begin with the case \eqref{eq:D2}, by  formally integrating~\eqref{eq:integrand_D2} with respect to $d\vv\theta r^{-2}dr\Lambda(d\vvw)$.
First, by~\eqref{eq:cov_fBm},
\[
\int_\R\frac{(e^{is_1\theta_1}-1)\wb{(e^{it_1\theta_1}-1)}}{|\theta_1|^2}d\theta_1 = 2\pi\cov(\B^{1/2}_{s_1},\B^{1/2}_{t_1}).
\]
Next, by~\eqref{eq:integrate_r},
\equh\label{eq:r2}
\int_0^\infty \frac{(rw_1)^{1/\alpha_1}(rw_2)^{-1/\alpha_2}}{(rw_2)^{-2/\alpha_2}+\theta_2^2}\frac{dr}{r^2} = \frac{\alpha_2}2B\pp{H_2-\frac 12, \frac32 - H_2}\frac{w_1^{1/\alpha_1}w_2^{1-1/\alpha_1}}{|\theta_2|^{2H_2-1}},
\eque
with
\equh\label{eq:case_D,H_2}
H_2 = 1-\frac{\alpha_2}2\pp{1-\frac 1{\alpha_1}} \quad\mbox{ provided }\quad
\frac{\alpha_2}2\pp{1-\frac1{\alpha_1}}\in(0,1/2).
\eque
So the above formal calculation yields an extra necessary assumption $\alpha_1>1$ for the case \eqref{eq:D2}, and in this case integrating~\eqref{eq:integrand_D2} with respect to $d\vv\theta r^{-2}dr\Lambda(d\vvw)$ yields, with $H_1 = 1/2$ and $H_2$ as in~\eqref{eq:case_D,H_2},
\begin{align*}
(2\pi)^2 & \int_{\Delta_1}w_1^{1/\alpha_1}w_2^{1-1/\alpha_1} \Lambda(d\vvw)2{\alpha_2}B\pp{H_2-\frac 12, \frac32 - H_2}C_{H_2}\cov(\B^{\vvH}_{\vvs},\B^{\vvH}_{\vvt})\\
& = (2\pi)^22\alpha_2\mathfrak c_{H_2}\int_{\Delta_1}w_1^{1/\alpha_1}w_2^{1-1/\alpha_1}\Lambda(d\vvw)\cov(\B_\vvs^\vvH,\B_\vvt^\vvH)  = (2\pi)^2\sigma^2\cov(\B^\vvH_\vvs,\B^\vvH_\vvt),
\end{align*}
with $\sigma$ as in regime \eqref{case:D,beta<1} in Theorem \ref{thm:non_critical}.

Now we identify the regime \eqref{case:g,beta=1}.  This time, the multiple integral on the right-hand side of~\eqref{eq:cov3} becomes (by integrating~\eqref{eq:integrand_g1})
\begin{multline*}
\int_{\Delta_1} \int_0^\infty\int_{\R^2}  \frac{(e^{is_1\theta_1}-1)\wb{(e^{it_1\theta_1}-1)}}{|\theta_1|^2}s_2t_2\prodd k12\frac{2(rw_k)^{-1/\alpha_k}}{(rw_k)^{-2/\alpha_k}+\theta_k^2}\frac{dr}{r^2}d\vv\theta \Lambda(d\vvw)\\
  = 2\pi(s_2t_2)\int_\R \frac{(e^{is_1\theta_1}-1)\wb{(e^{it_1\theta_1}-1)}}{|\theta_1|^2} \int_{\Delta_1}\int_\R
 \frac{2(rw_1)^{-1/\alpha_1}}{(rw_1)^{-2/\alpha_1}+\theta_1^2}\frac{dr}{r^2} \Lambda(d\vvw)d\theta_1.
\end{multline*}
Again by~\eqref{eq:integrate_r}, for
\[
H_1 = 1-\frac{\alpha_1}2, H_2 = 1 \quad\mbox{ provided }\quad \alpha_1\in(0,1),
\]
the above becomes
\begin{multline*}
2\pi(s_2t_2)\alpha_1B\pp{H_1-\frac 12,\frac32-H_1} \int_{\Delta_1}w_1\Lambda(d\vvw) \int_\R
\frac{(e^{is_1\theta_1}-1)\wb{(e^{it_1\theta_1}-1)}}{|\theta_1|^{1+2H_1}}d\theta_1\\
= (2\pi)^2\alpha_1\mathfrak c_{H_1}\int_{\Delta_1}w_1\Lambda(d\vvw) \cov(\B_\vvs^\vvH,\B_\vvt^\vvH).
\end{multline*}
This is the regime~\eqref{case:g,beta=1}.

To complete the computation of asymptotic covariance~\eqref{eq:cov3},
it remains to provide an integrable bound to apply the dominated convergence theorem. To establish the limit theorem, we need to also bound the fourth-moment. These are left to
the first two sections in the Supplementary Material.

\begin{supplement} [id=suppA]
\stitle{Proof of Theorem~\ref{thm:non_critical} and the boundary case}
\slink[doi]{COMPLETED BY THE TYPESETTER}
\sdatatype{.pdf}
\sdescription{We prove the two regimes in Theorem~\ref{thm:non_critical} and the convergence of  covariance in the boundary case for non-critical speed.}
\end{supplement}

\subsection*{Acknowledgement}
The authors thank two anonymous referees for careful reading and helpful comments. 
YS's research was supported in part by the Natural Sciences and Engineering Research Council of Canada (RGPIN-2014-04840).
YW's research was supported in part by NSA grant H98230-16-1-0322 and Army Research Laboratory grant W911NF-17-1-0006.

 \bibliographystyle{apalike}
\bibliography{references}


\begin{frontmatter}



\title{Supplement to ``Operator-scaling Gaussian random fields via aggregation''}

\runtitle{Supplement to ``Operator-scaling Gaussian random fields via aggregation''}

\begin{aug}

\author{\fnms{Yi} \snm{Shen}\thanksref{aa}\ead[label=ee1]{yi.shen@uwaterloo.ca}}
\and
\author{\fnms{Yizao} \snm{Wang}\thanksref{bb}\corref{}\ead[label=ee2]{yizao.wang@uc.edu}}
\address[aa]{Department of Statistics and Actuarial Science,
University of Waterloo,
Mathematics 3 Building,
200 University Avenue West
Waterloo, Ontario N2L 3G1,
Canada.
\printead{ee1}}
\address[bb]{
Department of Mathematical Sciences,
University of Cincinnati,
2815 Commons Way, ML--0025,
Cincinnati, OH, 45221-0025, USA.
\printead{ee2}}

\runauthor{Shen and Wang}

\affiliation{University of Waterloo and University of Cincinnati}

\end{aug}



\end{frontmatter}

We provide a proof of \citep[Theorem 1.5]{shen17operator} in Sections \ref{sec:proof_non_critical} and \ref{sec:3_integral} below. The boundary case of the non-critical regime is investigated in Section \ref{sec:boundary}.

\section{Proof of Theorem~\ref{thm:non_critical}, regime~\eqref{case:D,beta<1}}
\label{sec:proof_non_critical}
Recall that in this regime, we have
\equh\label{eq:D2'}
\vvn' =\vvn, n^* = n_2^{\alpha_2} \qmwith n_1^{\alpha_1}\gg n_2^{\alpha_2}, \alpha_1>1,
\eque
which we assume throughout this section without further mentioning.
Again we start by computing the second moments.
We write
\[
\cov\pp{S_\vvn(\vvs),S_\vvn(\vvt)} = \frac{|\vvn|\inv}{(2\pi)^2}\int_{\vvn\cdot(-\pi,\pi)^2}D_{\vvn,\vvs,\vvt}(\vv\theta/
\vvn)\what r(\vv\theta/\vvn)d\vv\theta.
\]
We have seen how to estimate and control $D$ in Lemma~\ref{lem:D}. We need the following estimates on $\what r(\vv\theta/\vvn)$.
\begin{Prop}\label{prop:rhat_theta_n}
Under assumption \eqref{eq:D2'},
\[
\what r(\vv\theta/\vvn) \sim \frac{n_2^{2H_2-1}}{|\theta_2|^{2H_2-1}}\int_{\Delta_1}w_1^{1/\alpha_1}w_2^{1-1/\alpha_1}\Lambda(d\vvw)2\alpha_2B\pp{H_2-\frac12,\frac32-H_2},
\]
and
\[
\what r(\vv\theta/\vvn) \le C\pp{\frac{n_2^{2H_2-1}}{|\theta_2|^{2H_2-1}}+1}.
\]
\end{Prop}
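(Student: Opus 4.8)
The plan is to run the same Fourier/polar-coordinate machinery as in Lemma~\ref{lem:r_hat}, but now with the scalar factor frozen at $n^* = n_2^{\alpha_2}$ as dictated by~\eqref{eq:D2'}. Starting from representation~\eqref{eq:rhat} I would split off the point mass, writing $\what r(\vv\theta/\vvn) = \what r_1(\vv\theta/\vvn) + \proba(\vv U = (1,1))$, and fix the correct normalization $(n^*)^{p(\vv\alpha)-1} = n_2^{\alpha_2(1/\alpha_1+1/\alpha_2-1)} = n_2^{2H_2-1}$, the last equality being exactly the definition of $H_2$ used in~\eqref{eq:r2}. The structural point, read from~\eqref{eq:limit_g}, is that this choice of $n^*$ makes the two coordinates behave asymmetrically: since $(n^*)^{1/\alpha_2} = n_2 = n_2'$, the $k=2$ factor $g\spp{(n^*rw_2)^{-1/\alpha_2},\theta_2/n_2}/(n^*)^{1/\alpha_2}$ stays non-degenerate and tends to $2(rw_2)^{-1/\alpha_2}/\pp{(rw_2)^{-2/\alpha_2}+\theta_2^2}$, whereas $(n^*)^{1/\alpha_1} = n_2^{\alpha_2/\alpha_1}\ll n_1$ forces the $k=1$ factor to degenerate, with $g\spp{(n^*rw_1)^{-1/\alpha_1},\theta_1/n_1}/(n^*)^{1/\alpha_1}\to 2(rw_1)^{1/\alpha_1}$ uniformly in $\theta_1$ (using $g(u,\theta)\le g(u,0)=(2-u)/u$). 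This uniformity in $\theta_1$ is precisely why the limit depends on $\theta_2$ only.

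Next I would prove that the normalized integrand $(n^*)^{-p(\vv\alpha)}\prodd k12 g\spp{(n^*rw_k)^{-1/\alpha_k},\theta_k/n_k}$ converges pointwise to $4(rw_1)^{1/\alpha_1}(rw_2)^{-1/\alpha_2}/\pp{(rw_2)^{-2/\alpha_2}+\theta_2^2}$ and then apply dominated convergence on the main region $n^*r\vvw\in T_{\vv\alpha}((0,\delta]^2)$. The same two bounds used in Lemma~\ref{lem:r_hat} supply a dominating function of the form $C_\delta (rw_1)^{1/\alpha_1}\min\ccbb{(rw_2)^{1/\alpha_2},(rw_2)^{-1/\alpha_2}/\theta_2^2}$: the trivial estimate $g(u,\theta)\le 2/u$ controls the $k=1$ factor and the increasing branch of the $k=2$ factor, while $1-\cos(\theta_2/n_2)\ge 2\theta_2^2/(\pi^2 n_2^2)$ (valid for $\vv\theta\in\vvn\cdot(-\pi,\pi)^2$) produces the decaying branch $(rw_2)^{-1/\alpha_2}/\theta_2^2$. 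After dominated convergence the $r$-integral is evaluated in closed form by~\eqref{eq:r2} (a case of~\eqref{eq:integrate_r}), giving the factor $2\alpha_2 B\pp{H_2-\frac12,\frac32-H_2}w_1^{1/\alpha_1}w_2^{1-1/\alpha_1}|\theta_2|^{-(2H_2-1)}$; integrating over $\Delta_1$ and restoring the prefactor $n_2^{2H_2-1}$ yields precisely the claimed asymptotic.

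The integrability of the dominating bound against $r^{-2}dr\,\Lambda(d\vvw)$ is the crux, and this is where the hypotheses enter. Near $r=0$ the bound behaves like $r^{1/\alpha_1+1/\alpha_2-2}=r^{p(\vv\alpha)-2}$, integrable because $p(\vv\alpha)>1$ (automatic from $\alpha_1,\alpha_2\in(0,2)$); near $r=\infty$ the decaying branch gives $r^{1/\alpha_1-1/\alpha_2-2}$, integrable exactly when $1/\alpha_1-1/\alpha_2<1$, which is guaranteed by the standing assumption $\alpha_1>1$. This is the essential difference from the critical regime of Lemma~\ref{lem:r_hat}: here the degenerate $k=1$ factor $(rw_1)^{1/\alpha_1}$ grows in $r$, so convergence at infinity is no longer automatic and rests entirely on the decay furnished by the non-degenerate $k=2$ factor together with $\alpha_1>1$. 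The same computation shows $\int_0^\infty\int_{\Delta_1}$ of the dominating function is bounded by $C|\theta_2|^{-(2H_2-1)}\int_{\Delta_1}w_1^{1/\alpha_1}w_2^{1-1/\alpha_1}\Lambda(d\vvw)$, which is finite precisely because $H_2\in(1/2,1)$ (itself a consequence of $\alpha_1\in(1,2)$, $\alpha_2\in(0,2)$); multiplying by $n_2^{2H_2-1}$ produces the leading term of the upper bound.

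Finally I would dispose of the complementary regions, where $u_1\ge\delta$ or $u_2\ge\delta$, to obtain the additive $+1$. On these pieces the corresponding $g$ loses its power of $n^*$, being bounded by $2/\delta$, and the indicator $n^*r\vvw\in T_{\vv\alpha}((0,1]^2)$ forces $r\ge \spp{n^*(w_1\wedge w_2)}^{-1}$, confining $r$ to order $(n^*)^{-1}$; using $g\le 2/u$ for the remaining factor then gives contributions of order $\int_{\Delta_1}w_1^{1/\alpha_1}\Lambda(d\vvw)$ and $\int_{\Delta_1}w_1\wedge w_2\,\Lambda(d\vvw)$, i.e.\ $O(1)$ uniformly in $\vv\theta\in\vvn\cdot(-\pi,\pi)^2$, exactly as in the lower-order estimates~\eqref{eq:hn_delta1} and~\eqref{eq:hn_delta2}; together with the point mass $\proba(\vv U=(1,1))\le 1$ this accounts for the additive constant. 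Combining the leading term with these bounded remainders yields $\what r(\vv\theta/\vvn)\le C\pp{n_2^{2H_2-1}|\theta_2|^{-(2H_2-1)}+1}$, completing the proof.
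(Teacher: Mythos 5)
Your overall strategy is the one the paper uses: the representation \eqref{eq:rhat} with $n^*=n_2^{\alpha_2}$, the pointwise degeneration of the $k=1$ factor to $2(rw_1)^{1/\alpha_1}$ while the $k=2$ factor stays non-degenerate, dominated convergence on the region $n^*r\vvw\in T_{\vv\alpha}((0,\delta]^2)$ with the dominating function $C_\delta\,(rw_1)^{1/\alpha_1}\min\{(rw_2)^{1/\alpha_2},(rw_2)^{-1/\alpha_2}\theta_2^{-2}\}$, and the closed-form evaluation via \eqref{eq:r2}. That part, including the observation that $\alpha_1>1$ is exactly what rescues integrability at $r=\infty$ and that the angular weight $w_1^{1/\alpha_1}w_2^{1-1/\alpha_1}$ is $\Lambda$-integrable, is correct and matches \eqref{eq:rhat1_1}. (The claim that the $k=1$ factor converges \emph{uniformly} in $\theta_1$ is an overstatement — near $|\theta_1|=n_1\pi$ it does not — but only pointwise convergence is needed, so this is harmless.)

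The gap is in the complementary region. You decompose it as $\{u_1>\delta\}\cup\{u_2>\delta\}$ and claim an $O(1)$ contribution by bounding the large-$u$ factor by $2/\delta$ and the remaining factor by $2/u$. On the sub-piece $\{u_1=(n^*rw_1)^{-1/\alpha_1}>\delta,\ u_2\le\delta\}$ this fails: there $r\asymp (n^*w_1)\inv$, so $2/u_2=2(n^*rw_2)^{1/\alpha_2}\asymp (w_2/w_1)^{1/\alpha_2}$, and after integrating $r^{-2}dr$ over an interval of length $\asymp(n^*w_1)\inv$ and restoring the prefactor $(n^*)\inv$ one is left with $C_\delta\int_{\Delta_1}w_1^{1-1/\alpha_2}w_2^{1/\alpha_2}\Lambda(d\vvw)$. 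When $\alpha_2<1$ — which is permitted here, since only $\alpha_1>1$ is assumed — the exponent $1-1/\alpha_2$ is negative and this angular integral need not be finite for a general $\Lambda$. (You also list only two contributions for what are three sub-regions, so this piece seems to have been dropped.) The paper sidesteps the issue by never splitting on the first coordinate: it bounds $g(u_1,\cdot)\le 2/u_1=2(n^*rw_1)^{1/\alpha_1}$ throughout and decomposes only according to whether $u_2\le\delta$ (then integrates over all $r>0$, as in \eqref{eq:rhat1_1}) or $u_2>\delta$ (then $r\asymp(n^*w_2)\inv$, as in \eqref{eq:rhat1_2}); both cases produce the weight $w_1^{1/\alpha_1}w_2^{1-1/\alpha_1}$, whose exponents are positive precisely because $\alpha_1>1$. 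Adopting that decomposition — or, alternatively, retaining the $\min$-bound on the $k=2$ factor on the problematic piece and interpolating between its two branches — closes the gap.
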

\begin{proof}
In this case, \eqref{eq:rhat} becomes
\equh\label{eq:rhat1}
\what r(\vv\theta/\vvn) = (n^*)\inv\int_{\Delta_1}\int_0^\infty \prodd k12g\pp{(n^*rw_k)^{-1/\alpha_k},{\theta_k}/{n_k}}\indd{n^*r\vvw\in T_{\vv\alpha}((0,1]^2)}\frac{dr}{r^2}\Lambda(d\vvw).
\eque

We start by proving the second part of the proposition.
For the first direction (corresponding to $H_1 = 1/2$), we use the simple bound $g(u,\theta)\le 2u\inv$. For the second direction (corresponding to $H_2\in(1/2,1)$), again we break the integral into two parts. Introduce and fix $\delta\in(0,1)$. As in~\eqref{eq:<delta}, we have
\[
g\pp{(n^*rw_2)^{-1/\alpha_2},{\theta_2}/{n_2}}\inddd{(n^*rw_2)^{-1/\alpha_2}\in(0,\delta]}\le \frac{C(n^*)^{1/\alpha_2}}{(rw_2)^{1/\alpha_2}\theta_2^2}.
\]
Then,
\begin{align}
\int_0^\infty & \prodd k12 g\pp{(n^*rw_k)^{-1/\alpha_k},{\theta_k}/{n_k}}\inddd{(n^*rw_2)^{-1/\alpha_2}\in(0,\delta]}\frac{dr}{r^2} \nonumber\\
& \le C(n^*)^{p(\vv\alpha)}\int_0^\infty r^{1/\alpha_1-2}w_1^{1/\alpha_1}\min\ccbb{(rw_2)^{1/\alpha_2},\frac1{(rw_2)^{1/\alpha_2}\theta_2^2}}dr\nonumber\\
& \le C(n^*)^{p(\vv\alpha)}\frac{w_1^{1/\alpha_1}w_2^{1-1/\alpha_1}}{|\theta_2|^{2H_2-1}},\label{eq:rhat1_1}
\end{align}
where in the last step again we break the integral into two parts at $(rw_2)^{1/\alpha_2} = 1/|\theta_2|$, and recall that in this case $H_2$ is given in~\eqref{eq:case_D,H_2}. At the same time,
\begin{align}
\int_0^\infty  \prodd k12  & g\pp{(n^*rw_k)^{-1/\alpha_k},{\theta_k}/{n_k}}\inddd{(n^*rw_2)^{-1/\alpha_2}\in(\delta,1]}\frac{dr}{r^2} \nonumber\\
& = \int_{(n^*w_2)\inv}^{(n^*w_2)\inv \delta^{-\alpha_2}} \prodd k12 g\pp{(n^*rw_k)^{-1/\alpha_k},{\theta_k}/{n_k}}\frac{dr}{r^2} \nonumber\\
& \le C(n^*)^{p(\vv\alpha)}w_1^{1/\alpha_1}w_2^{1/\alpha_2}\int_{(n^*w_2)\inv}^{(n^*w_2)\inv \delta^{-\alpha_2}} r^{p(\vv\alpha)-2}dr  = Cn^*w_1^{1/\alpha_1}w_2^{1-1/\alpha_1},\label{eq:rhat1_2}
\end{align}
where we used the bound $g(u,\theta)\le 2u\inv$ twice. Therefore, combining~\eqref{eq:rhat1},~\eqref{eq:rhat1_1} and~\eqref{eq:rhat1_2}, we have
\[
\what r(\vv\theta/\vvn)\le \int_{\Delta_1}w_1^{1/\alpha_1}w_2^{1-1/\alpha_1}\Lambda(d\vvw)\pp{(n^*)^{p(\vv\alpha)-1}|\theta_2|^{1-2H_2}+1}.
\]
Remark that $(n^*)^{p(\vv\alpha)-1} = n_2^{1+\alpha_2/\alpha_1-\alpha_2} = n_2^{2H_2-1}$ by \eqref{eq:case_D,H_2} and \eqref{eq:D2'}.
This proves the second part of the proposition. For the first part, to show the asymptotics of \eqref{eq:rhat1}, we have seen before
\[
(n^*)^{-p(\vv\alpha)} \prodd k12 g\pp{(n^*rw_k)^{-1/\alpha_k},{\theta_k}/{n_k}} \sim
2(rw_1)^{1/\alpha_1}
\frac{2(rw_2)^{-1/\alpha_2}}{(rw_2)^{-2/\alpha_2}+\theta_2^2}.
\]
So by the dominated convergence theorem and \eqref{eq:integrate_r} (see \eqref{eq:r2}), the first part of the proposition follows. We omit the details.
\end{proof}

\begin{Prop}
Under assumption \eqref{eq:D2'},
\[
\lim_{\vvn\to\infty}
 \frac{\cov(S_{\vvn}(\vvs),S_{\vvn}(\vvt))}{n_1^{2H_1}n_2^{2H_2}} = \sigma^2\cov(\B^\vvH_\vvs,\B^\vvH_\vvt),
\]
with $\vvH,\sigma$ as in regime~\eqref{case:D,beta<1} in Theorem~\ref{thm:non_critical}.
\end{Prop}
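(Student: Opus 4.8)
The plan is to carry out rigorously the formal computation that produced \eqref{eq:cov3} in this regime, now justified by the dominated convergence theorem, feeding in the two estimates on $\what r(\vv\theta/\vvn)$ from Proposition~\ref{prop:rhat_theta_n} together with the bounds on $D_{\vvn,\vvs,\vvt}$ from Lemma~\ref{lem:D}. I would start from the Fourier representation \eqref{eq:cov1'} with $\vvn'=\vvn$, and since $H_1=1/2$ divide by $n_1^{2H_1}n_2^{2H_2}=n_1n_2^{2H_2}$ to write
\[
\frac{\cov(S_\vvn(\vvs),S_\vvn(\vvt))}{n_1n_2^{2H_2}} = \frac1{(2\pi)^2}\int_{\R^2}\frac{D_{\vvn,\vvs,\vvt}(\vv\theta/\vvn)\,\what r(\vv\theta/\vvn)}{n_1n_2^{2H_2}|\vvn|}\inddd{\vv\theta\in\vvn\cdot(-\pi,\pi)^2}\,d\vv\theta,
\]
so the task reduces to computing the limit of a single integral over $\R^2$ of integrands supported on the growing rectangles $\vvn\cdot(-\pi,\pi)^2$.

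Next I would identify the pointwise limit of the integrand. By Lemma~\ref{lem:D}, $D_{\vvn,\vvs,\vvt}(\vv\theta/\vvn)/|\vvn|^2\to\prodd k12\frac{(e^{is_k\theta_k}-1)\wb{(e^{it_k\theta_k}-1)}}{|\theta_k|^2}$, and by the first part of Proposition~\ref{prop:rhat_theta_n}, $\what r(\vv\theta/\vvn)\sim C'\,n_2^{2H_2-1}|\theta_2|^{1-2H_2}$ with $C'=2\alpha_2 B\pp{H_2-\tfrac12,\tfrac32-H_2}\int_{\Delta_1}w_1^{1/\alpha_1}w_2^{1-1/\alpha_1}\Lambda(d\vvw)$. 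The power bookkeeping is exact, $|\vvn|^2 n_2^{2H_2-1}/(n_1n_2^{2H_2}|\vvn|)=1$, so the integrand converges pointwise on $\R_o^2$ to $\prodd k12\frac{(e^{is_k\theta_k}-1)\wb{(e^{it_k\theta_k}-1)}}{|\theta_k|^2}\cdot\frac{C'}{|\theta_2|^{2H_2-1}}$; note the limit is free of $\theta_1$-dependence beyond the $D$-factor, reflecting $H_1=1/2$.

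The heart of the argument, and the step I expect to be the main obstacle, is the construction of an integrable, $\vvn$-independent dominating function. Combining $|D_{\vvn,\vvs,\vvt}(\vv\theta/\vvn)|/|\vvn|^2\le\pi^2\prodd k12\min\{s_kt_k,|\theta_k|^{-2}\}$ from \eqref{eq:D_upper} with the bound $\what r(\vv\theta/\vvn)\le C(n_2^{2H_2-1}|\theta_2|^{1-2H_2}+1)$ from Proposition~\ref{prop:rhat_theta_n}, and using $|\vvn|/(n_1n_2^{2H_2})=n_2^{1-2H_2}\le 1$ (valid since $H_2>1/2$ and $n_2\ge1$), the integrand is dominated for all $\vvn$ by $C\prodd k12\min\{s_kt_k,|\theta_k|^{-2}\}\pp{|\theta_2|^{1-2H_2}+1}$. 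I then check integrability on $\R^2$: the $\theta_1$-factor $\min\{s_1t_1,|\theta_1|^{-2}\}$ is integrable, while the $\theta_2$-factor $\min\{s_2t_2,|\theta_2|^{-2}\}(|\theta_2|^{1-2H_2}+1)$ is integrable near the origin because $2H_2-1<1$ and at infinity because its leading decay is $|\theta_2|^{-(1+2H_2)}$ with $1+2H_2>1$. Both conditions amount exactly to $H_2\in(1/2,1)$, which holds by \eqref{eq:case_D,H_2} under the standing assumption $\alpha_1>1$ in \eqref{eq:D2'} (the upper bound $H_2<1$ forcing $\alpha_1>1$, the lower bound $H_2>1/2$ being automatic from $1/\alpha_1+1/\alpha_2>1$ for $\alpha_1,\alpha_2\in(0,2)$).

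With the dominated convergence theorem and Fubini the variables separate, giving
\[
\lim_{\vvn\to\infty}\frac{\cov(S_\vvn(\vvs),S_\vvn(\vvt))}{n_1n_2^{2H_2}} = \frac{C'}{(2\pi)^2}\pp{\int_\R\frac{(e^{is_1\theta_1}-1)\wb{(e^{it_1\theta_1}-1)}}{|\theta_1|^2}d\theta_1}\pp{\int_\R\frac{(e^{is_2\theta_2}-1)\wb{(e^{it_2\theta_2}-1)}}{|\theta_2|^{1+2H_2}}d\theta_2}.
\]
This double $\vv\theta$-integral is precisely the one arising in the formal evaluation of regime \eqref{eq:D2} after the $r$- and $\vvw$-integrations that led to \eqref{eq:cov3}. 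Evaluating it by \eqref{eq:cov_fBm} exactly as done there, the first integral gives $2\pi\cov(\B^{1/2}_{s_1},\B^{1/2}_{t_1})$ and the second gives $2\pi C_{H_2}\cov(\B^{H_2}_{s_2},\B^{H_2}_{t_2})$; recalling $\mathfrak c_{H_2}=B(H_2-\tfrac12,\tfrac32-H_2)C_{H_2}$ and the product form $\cov(\B^\vvH_\vvs,\B^\vvH_\vvt)=\cov(\B^{1/2}_{s_1},\B^{1/2}_{t_1})\cov(\B^{H_2}_{s_2},\B^{H_2}_{t_2})$ with $\vvH=(1/2,H_2)$, the constants collapse to $\sigma^2=2\alpha_2\mathfrak c_{H_2}\int_{\Delta_1}w_1^{1/\alpha_1}w_2^{1-1/\alpha_1}\Lambda(d\vvw)$, yielding $\sigma^2\cov(\B^\vvH_\vvs,\B^\vvH_\vvt)$ as claimed.
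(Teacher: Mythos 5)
Your proposal is correct and follows essentially the same route as the paper's proof: the Fourier representation \eqref{eq:cov1'} with $\vvn'=\vvn$, the pointwise asymptotics and uniform bounds from Lemma~\ref{lem:D} and Proposition~\ref{prop:rhat_theta_n}, the dominated convergence theorem, and the evaluation of the limiting integral via \eqref{eq:cov_fBm}. The paper states this in two lines and leaves the domination implicit; your explicit verification of the power bookkeeping, of the $\vvn$-independent dominating function $C\prodd k12\min\{s_kt_k,|\theta_k|^{-2}\}(|\theta_2|^{1-2H_2}+1)$, and of its integrability (equivalent to $H_2\in(1/2,1)$, hence to $\alpha_1>1$) is exactly the content the paper omits.
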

\begin{proof}
By the estimates on $\what r(\vv\theta/\vvn)$ and $D$ in Proposition~\ref{prop:rhat_theta_n} and Lemma~\ref{lem:D} respectively, we have
\begin{align*}
 \frac{\cov(S_\vvn(\vvs),S_\vvn(\vvt))}{n_1^{2H_1}n_2^{2H_2}}  &= \frac{|\vvn|\inv}{n_1n_2^{2H_2}(2\pi)^2}\int_{\vvn\cdot(-\pi,\pi)^2}D_{\vvn,\vvs,\vvt}(\vv\theta/\vvn)\what r(\vv\theta/\vvn)d\vv\theta\\
& \sim \frac1{(2\pi)^2}\int_{\R^2}\prodd k12 \frac{(e^{is_k\theta_k}-1)\wb{(e^{it_k\theta_k}-1)}}{|\theta_k|^2}\frac1{|\theta_2|^{2H_2-1}}d\vv\theta\\
& \quad\times \int_{\Delta_1}w_1^{1/\alpha_1}w_2^{1-1/\alpha_1}\Lambda(d\vvw)2\alpha_2B\pp{H_2-\frac 12,\frac32-H_2}\\
& = \sigma^2\cov(\B_\vvs^\vvH,\B_\vvt^\vvH),
\end{align*}
by the dominated convergence theorem.
\end{proof}
\begin{Prop}For $\vvt$ such that $\floor{\vvn\cdot\vvt} = \vvn\cdot\vvt$, we have
\[
\esp S_\vvn(\vvt)^4 \le Cn_1^3n_2^{2H_2+2}t_1^3t_2^{2H_2+2}.
\]
\end{Prop}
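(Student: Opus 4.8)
The plan is to transcribe the fourth-moment computation of Proposition~\ref{prop:typeI_4th} up to its Fourier representation, and then substitute the non-critical bound on $\what r$ from Proposition~\ref{prop:rhat_theta_n} in place of the critical-speed bound \eqref{eq:bound_rhat}. The essential point is that both the representation \eqref{eq:4th_1}, namely $\esp S_\vvn(\vvt)^4 \le C|\vvn|\inv\int_{\vvn\cdot(-\pi,\pi)^2}J^*_{\floor{\vvn\cdot\vvt}}(\vv\theta/\vvn)\,\what r(\vv\theta/\vvn)\,d\vv\theta$, and the pointwise bound \eqref{eq:Hnp_DCT} on $J_n$, are universal: their derivation uses only the conditional independence of the two walks given $\vv q$ (so that the conditional fourth moment factorizes as $\prod_{k=1,2}\esp_{q_k}(\cdots)$) together with the fact that integrating $\prodd k12(1-u_k)^{|\ell_k|}$ against $\mu^*$ reproduces $r(\vvl)$ and hence $\what r$. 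Neither step invokes the critical-speed assumption \eqref{eq:n*}, so both remain available in the present regime \eqref{eq:D2'}.

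First I would insert \eqref{eq:Hnp_DCT}, which upon taking products over $k=1,2$ gives $J^*_{\floor{\vvn\cdot\vvt}}(\vv\theta/\vvn)\le C|\vvn|^4(t_1t_2)^2\prodd k12\min\ccbb{t_k^2,|\theta_k|^{-2}}$, so that $\esp S_\vvn(\vvt)^4 \le C|\vvn|^3(t_1t_2)^2\int_{\vvn\cdot(-\pi,\pi)^2}\prodd k12\min\ccbb{t_k^2,|\theta_k|^{-2}}\,\what r(\vv\theta/\vvn)\,d\vv\theta$. I would then plug in $\what r(\vv\theta/\vvn)\le C\pp{n_2^{2H_2-1}|\theta_2|^{1-2H_2}+1}$ from Proposition~\ref{prop:rhat_theta_n}, extend the domain to $\R^2$ (legitimate since the integrand is nonnegative), and split the integral into the two factorized pieces corresponding to the two summands in the $\what r$-bound.

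For the leading piece, the $\theta_1$-integral is $\int_\R\min\ccbb{t_1^2,|\theta_1|^{-2}}d\theta_1 = O(t_1)$, while the $\theta_2$-integral, after the scaling $\theta_2=\phi/t_2$, equals $t_2^{2H_2}\int_\R\min\ccbb{1,\phi^{-2}}|\phi|^{1-2H_2}d\phi$, which is finite precisely because $H_2\in(1/2,1)$ in this regime by \eqref{eq:case_D,H_2} (integrability at $0$ requires $H_2<1$, at $\infty$ requires $H_2>0$). Collecting factors and using $n^*=n_2^{\alpha_2}$ yields $C|\vvn|^3(t_1t_2)^2\, n_2^{2H_2-1} t_1 t_2^{2H_2} = Cn_1^3 n_2^{2H_2+2}t_1^3 t_2^{2H_2+2}$, which is exactly the claimed bound. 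For the remaining ``$+1$'' piece, both one-dimensional integrals are $O(t_1)$ and $O(t_2)$, producing the contribution $Cn_1^3 n_2^3 t_1^3 t_2^3$.

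The one place requiring care — and the only point at which the hypothesis $\floor{\vvn\cdot\vvt}=\vvn\cdot\vvt$ is used — is showing that this second contribution is dominated by the first. Since $n_2t_2$ is then a positive integer, $n_2t_2\ge 1$, and because $1-2H_2<0$ here we may write $n_2^3 t_2^3 = n_2^{2H_2+2}t_2^{2H_2+2}(n_2t_2)^{1-2H_2}\le n_2^{2H_2+2}t_2^{2H_2+2}$, so the ``$+1$'' term is absorbed into $Cn_1^3 n_2^{2H_2+2}t_1^3 t_2^{2H_2+2}$. This absorption is the main (mild) obstacle; everything else is a direct transcription of the critical-speed argument with the sharper, direction-asymmetric $\what r$-bound of Proposition~\ref{prop:rhat_theta_n} in place of \eqref{eq:bound_rhat}.
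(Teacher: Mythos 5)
Your proposal is correct and is essentially the paper's own proof: the paper likewise combines the universal Fourier representation \eqref{eq:4th_1} and the bound \eqref{eq:Hnp_DCT} with the $\what r$-estimate of Proposition~\ref{prop:rhat_theta_n}, obtaining $C(n_1^3n_2^{2H_2+2}t_1^3t_2^{2H_2+2}+|\vvn|^3|\vvt|^3)$ and absorbing the second term via $n_2t_2\ge 1$ and $H_2\in(1/2,1)$. Your write-up just makes explicit the two points the paper leaves implicit (that \eqref{eq:4th_1} and \eqref{eq:Hnp_DCT} do not use the critical-speed assumption, and the exact absorption step).
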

\begin{proof}
By \eqref{eq:4th_1} and \eqref{eq:Hnp_DCT} and Proposition \ref{prop:rhat_theta_n}, we have
\begin{align*}
\esp S_\vvn(\vvt)^4 &\le C|\vvn|\inv\int_{\vvn\cdot(-\pi,\pi)^2}J_{\floor{\vvn\cdot \vvt}}^*(\vv\theta/\vvn)\what r(\vv\theta/\vvn)d\vv\theta\\
& \le C|\vvn|\inv |\vvn|^4|\vvt|^2\int_{\R^2}\prodd k12\min\ccbb{t_k^2,\frac1{|\theta_k|^2}}(n_2^{2H_2-1}|\theta_2|^{1-2H_2}+1)d\vv\theta\\
& \le C(n_1^3n_2^{2H_2+2}t_1^3t_2^{2H_2+2} + |\vvn|^3|\vvt|^3).
\end{align*}
The desired result now follows, by noticing that $H_2\in(1/2,1)$.
\end{proof}
\begin{proof}[Proof of Theorem \ref{thm:non_critical}, regime \eqref{case:D,beta<1}]
Set $Y_\vvn$ as in \eqref{eq:Yn}.
We only verify the Lindeberg--Feller condition. This time we have
\begin{align*}
\esp\pp{Y_\vvn^2\inddd{Y_\vvn^2>m(\vvn)\eta}} & \leq \frac1{m(\vvn)\eta}\esp Y_\vvn^4 \le
 \frac1{m(\vvn)\eta}\pp{\frac1{n_1^{1/2}n_2^{H_2}}\summ w1d |a_w| \pp{\esp S_\vvn^4(\vvt_w)}^{1/4}}^4 \\
& \leq \frac C{m(\vvn)\eta n_1^2n_2^{4H_2}} n_1^3n_2^{2H_2+2}= \frac C\eta\frac {n_1n_2^{2-2H_2}}{m(\vvn)}.
\end{align*}
This completes the proof.
\end{proof}
\section{Proof of Theorem \ref{thm:non_critical}, regime \eqref{case:g,beta=1}}\label{sec:3_integral}
Recall that in this regime,
\equh\label{eq:D3'}
n^* = n_1^{\alpha_1} \qmand \vvn'= (n_1,n_1^{\alpha_1/\alpha_2})   \qmwith \alpha_1<1, n_1^{\alpha_1}\gg n_2^{\alpha_2},
\eque
which we assume throughout without further specification.  The treatment is slightly different from the previous cases in the sense that we have to work with the representation of covariance function as a triple integral (see \eqref{eq:triple0} below), and apply the dominated convergence theorem once for all.

\begin{Prop}
Under the assumption \eqref{eq:D3'}, we have
\[
\lim_{\vvn\to\infty}
\frac{\cov(S_{\vvn}(\vvs),S_{\vvn}(\vvt))}{n_1^{2H_1}n_2^{2H_2}} = \sigma^2\cov(\B^\vvH_\vvs,\B^\vvH_\vvt),
\]
with $\vvH = (1-\alpha_1/2,1)$, and $\sigma$ as in regime~\eqref{case:g,beta=1} in Theorem~\ref{thm:non_critical}.
\end{Prop}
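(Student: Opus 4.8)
My plan is to make rigorous the formal computation carried out around \eqref{eq:cov3}--\eqref{eq:integrand_g1}. Starting from \eqref{eq:cov1'} with $\vvn'=(n_1,n_1^{\alpha_1/\alpha_2})$ as in \eqref{eq:D3'} and expanding $\what r(\vv\theta/\vvn')$ via \eqref{eq:rhat}, I would write the covariance as a single integral over $\R^2\times\R_+\times\Delta_1$ against $d\vv\theta\,r^{-2}dr\,\Lambda(d\vvw)$. The first bookkeeping step is the normalization identity $n_1^{2H_1}n_2^{2H_2}=|\vvn|^2(n^*)^{p(\vv\alpha)-1}/|\vvn'|$, valid here since $n^*=n_1^{\alpha_1}$, $H_1=1-\alpha_1/2$, $H_2=1$ and $n_k'=(n^*)^{1/\alpha_k}$ for both $k$; with it the normalized covariance becomes $(2\pi)^{-2}$ times the triple integral of
\[
F_\vvn(r,\vvw,\vv\theta):=\frac{D_{\vvn,\vvs,\vvt}(\vv\theta/\vvn')}{|\vvn|^2}\prodd k12\frac{g\spp{(n^*rw_k)^{-1/\alpha_k},\theta_k/n_k'}}{(n^*)^{1/\alpha_k}}\inddd{\vv\theta\in\vvn'\cdot(-\pi,\pi)^2}\inddd{n^*r\vvw\in T_{\vv\alpha}((0,1]^2)}.
\]

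Next I would record the pointwise limit. By \eqref{eq:limit_D}, since $n_1'=n_1$ the first Dirichlet factor tends to $(e^{is_1\theta_1}-1)\wb{(e^{it_1\theta_1}-1)}/|\theta_1|^2$, while since $n_2'\gg n_2$ the second collapses to the constant $s_2t_2$; by \eqref{eq:limit_g} both $g$-factors tend to the Lorentzians $2(rw_k)^{-1/\alpha_k}/((rw_k)^{-2/\alpha_k}+\theta_k^2)$. Hence $F_\vvn$ converges pointwise to the integrand \eqref{eq:integrand_g1}.

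The main obstacle --- and the reason one cannot simply reuse the ``estimate $\what r$ first'' scheme of the previous regime --- is that at $H_2=1$ performing the $r$-integral first would produce a factor $|\theta_2|^{-(2H_2-1)}=|\theta_2|^{-1}$, non-integrable near $\theta_2=0$; the $\theta_2$-integral is rescued only by the degenerate factor $s_2t_2$, so the order of integration matters and the dominated convergence theorem must be applied to the full triple integrand at once. For the dominating function I would therefore extract no decay from the degenerate direction: I bound its Dirichlet factor crudely by the constant $s_2t_2$ (using $|D_{\sfloor{n_2t_2}}|\le n_2t_2$), letting all $\theta_2$-integrability come from $g$. Each $g$-factor is split at a fixed $\delta\in(0,1)$ as in Proposition~\ref{prop:rhat_theta_n}: on $\{u_k:=(n^*rw_k)^{-1/\alpha_k}\le\delta\}$, using $g\le 2/u$ together with $2(1-\cos\phi)\ge 4\phi^2/\pi^2$ gives $g\spp{u_k,\theta_k/n_k'}/(n^*)^{1/\alpha_k}\le C_\delta\min\{(rw_k)^{1/\alpha_k},(rw_k)^{-1/\alpha_k}\theta_k^{-2}\}$, while on $\{u_k\in(\delta,1]\}$ the crude bound $g\le C_\delta$ shows (exactly as in \eqref{eq:rhat1_2}) that the corresponding piece is of lower order and vanishes after normalization. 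Combining the nondegenerate factor $\pi^2\min\{s_1t_1,|\theta_1|^{-2}\}$ from \eqref{eq:D_upper}, the constant $s_2t_2$, and the two refined $g$-bounds yields a $\vvn$-independent dominating function $G$; I would verify $\int G<\infty$ in the order $\theta_2$ first (the Lorentzian integrates to a bound uniform in $r$), then $r$ (finite precisely because $1/\alpha_1>1$, i.e.\ $\alpha_1<1$, matching the admissibility of \eqref{eq:integrate_r} with $H_1\in(1/2,1)$), then $\theta_1$ (finite for $\alpha_1\in(0,2)$), then $\Lambda(d\vvw)$ (bounded weight against a finite measure).

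With dominated convergence justified, the limit equals the triple integral of \eqref{eq:integrand_g1}, which I would evaluate exactly as in the main text: the $\theta_2$-integral of the Lorentzian gives $2\pi$, the $r$-integral is computed by \eqref{eq:integrate_r} with $H_1=1-\alpha_1/2$, producing the weight $w_1$ and the factor $|\theta_1|^{-(2H_1-1)}$, and the remaining $\theta_1$-integral is identified through \eqref{eq:cov_fBm}. This yields $\sigma^2\cov(\B^\vvH_\vvs,\B^\vvH_\vvt)$ with $\vvH=(1-\alpha_1/2,1)$ and $\sigma^2=\alpha_1\mathfrak c_{H_1}\int_{\Delta_1}w_1\Lambda(d\vvw)$, as claimed.
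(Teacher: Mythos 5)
Your overall strategy coincides with the paper's: the same triple-integral representation \eqref{eq:triple0}, the same normalization identity, a single application of dominated convergence to the full integrand (correctly motivated by the non-integrability of $|\theta_2|^{-1}$ at $H_2=1$), the same dominating function on the region where both $u_k:=(n^*rw_k)^{-1/\alpha_k}$ are at most $\delta$, and the same order of integration $\theta_2$, then $r$, then $\theta_1$, with $\alpha_1<1$ entering exactly where you say it does. Up to that point the proposal is correct and matches \eqref{eq:J1}.

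The gap is in the one-sentence dismissal of the complementary region. You claim that whenever some $u_k$ lies in $(\delta,1]$, the crude bound $g\le C_\delta$ shows the piece is of lower order ``exactly as in \eqref{eq:rhat1_2}''. This does work for the pieces where $u_1\in(\delta,1]$: there $g_1/n_1'\le C_\delta/n_1$ survives the $\theta_1$-integration (which is $O(1)$ thanks to $\min\{s_1t_1,\theta_1^{-2}\}$), and the constraint $r\ge(n^*w_1)^{-1}$ makes the $r$-integral $O(n^*w_1)$, giving a total of order $n_1^{\alpha_1-1}\to0$. But it fails for the mixed piece $\Omega_\vvn^{0,1}=\{u_1\le\delta,\,u_2\in(\delta,1]\}$: the crude bound $g_2/(n^*)^{1/\alpha_2}\le C_\delta/n_2'$ is exactly cancelled by the length $2\pi n_2'$ of the $\theta_2$-integration range (you have bounded the second Dirichlet factor by the constant $s_2t_2$, so no $\theta_2$-decay remains), and the remaining $(\theta_1,r,\vvw)$-integral is of the same order as the main term. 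The crude bound therefore only shows this piece is $O(1)$, not $o(1)$. Trying to rescue it via the constraint $r\asymp(n^*w_2)^{-1}$ does produce a factor $(n^*)^{1-1/\alpha_1}\to0$, but at the price of the weight $w_1^{1/\alpha_1}w_2^{1-1/\alpha_1}$, whose exponent on $w_2$ is negative precisely because $\alpha_1<1$, so its $\Lambda$-integral need not be finite. The paper instead handles $\Omega_\vvn^{0,1}$ by a second application of dominated convergence: for fixed $(r,\vvw)$ the indicator of the region tends to zero pointwise (since $n^*r\vvw$ eventually leaves $T_{\vv\alpha}((0,\delta]\times(\delta,1])$), while the integrand is dominated by the $\Lambda$-integrable bound \eqref{eq:J4}, which carries only the harmless weight $w_1$. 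You need to add this argument, or an equivalent one, for the proof to close.
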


\begin{proof}
This time, to simplify the notation we introduce
\[
\wt h_\vvn(r,\vvw,\vv\theta) := \prodd k12 g\pp{(n^*rw_k)^{-1/\alpha_k}, {\theta_k}/{n'_k}}\indd{n^*r\vvw\in T_{\vv\alpha}((0,1]^2)}.
\]
Combining (\ref{eq:cov1'}) and (\ref{eq:rhat}), we have
\begin{multline}\label{eq:triple0}
\frac{\cov(S_{\vvn}(\vvs),S_{\vvn}(\vvt))}{n_1^{2H_1}n_2^{2H_2}}\\
=  \frac1{|\vvn'||\vvn|^2(2\pi)^2}
\int_{\Delta_1}\int_0^\infty\int_{\vvn'\cdot(-\pi,\pi)^2}D_{\vvn,\vvs,\vvt}(\vv\theta/\vvn')
\wt h_\vvn(r,\vvw,\vv\theta) r^{-2}d\vv\theta dr \Lambda(d\vvw).
\end{multline}

We shall apply the dominated convergence theorem. We have seen the pointwise convergence of the normalized integrand 
in Section \ref{sec:non_critical},
 so it suffices to 
 find an integrable bound.
Similarly as in the critical case, we divide this integral into two parts with domains determined by
\[
\Omega_\vvn^{0,0} :=\ccbb{n^*r\vvw\in T_{\vv\alpha}((0,\delta]^2)} \qmand \Omega_\vvn := \ccbb{n^*r\vvw\in T_{\vv\alpha}((0,1]^2\setminus(0,\delta]^2)},
\]
respectively. Introduce also  $\Xi_{\vvn'} := \{\vv\theta\in\vvn'\cdot(-\pi,\pi)^2\}$. 
We first justify the application of the dominated convergence theorem to
\begin{equation}\label{integral1}
 \frac1{|\vvn'||\vvn|^2(2\pi)^2}\int_{\Delta_1}\int_0^\infty\int_{\R^2}D_{\vvn,\vvs,\vvt}(\vv\theta/\vvn')
 \ind_{\Xi_{\vvn'}}\wt h_\vvn(r,\vvw,\vv\theta)
\ind_{\Omega_\vvn^{0,0}}
r^{-2}d\vv\theta dr \Lambda(d\vvw).
\end{equation}
Similarly as in Lemma \ref{lem:D}, we have, for $k=1,2$, $|\theta_k|\le n_k'\pi$,
$$
\left|\frac{D_{\lfloor n_kt_k\rfloor}(\theta_k/n_k')}{n_k}\right|=\frac{|\sin(\lfloor n_kt_k\rfloor \theta_k/2n_k')|}{n_k|\sin(\theta_k/2n_k')|} \le \min\left\{\frac {\pi t_k}2,
\frac{n'_k}{|\theta_k|n_k}\right\}.
$$
Hence
\equh\label{eq:D1}
|D_{\vvn,\vvs,\vvt}(\vv\theta/\vvn')|\ind_{\Xi_{\vvn'}}
\leq \pp{\frac \pi2}^4|\vvn|^2\min\ccbb{s_1t_1, \frac1{|\theta_1|^2}}s_2t_2.
\eque
On $\Omega_\vvn^{0,0}$,
using in addition
\begin{align*}
 g\pp{(n^*rw_k)^{-1/\alpha_k}, {\theta_k}/{n'_k}} & \le \frac{2(n^*rw_k)^{1/\alpha_k}}{1+C_\delta (n^*rw_k)^{2/\alpha_k}(1-\cos(\theta_k/(n^*)^{1/\alpha_k})}\\
& \le \frac{2(n^*rw_k)^{1/\alpha_k}}{1+C_\delta \theta_k^2(rw_k)^{2/\alpha_k}},
 \end{align*}
 where we used the fact that $1-\cos\theta\ge 2\theta^2/\pi^2$ for all $\theta\in[-\pi,\pi]$,
 we bound the integrand of \eqref{integral1} (with the normalization $(|\vvn'||\vvn|^2)\inv$) by
 \[
 C \min\left\{s_1t_1, \frac1{\theta_1^2}\right\}s_2t_2 \prod_{k=1}^2\frac{(rw_k)^{1/\alpha_k}}{1+C_\delta (rw_k)^{2/\alpha_k}\theta_k^2}r^{-2}.
 \]
 To see that this is integrable, we integrate it with respect to $d\theta_2,dr$ and $d\theta_1$ in order and 
 obtain
\begin{align}
 \int_{\Delta_1} & \int_{\R^2}\int_{\R_+}
\min\left\{s_1t_1, \frac1{\theta_1^2}\right\}s_2t_2 \prod_{k=1}^2\frac{(rw_k)^{1/\alpha_k}}{1+C_\delta (rw_k)^{2/\alpha_k}\theta_k^2}r^{-2}dr
d\vv\theta \Lambda(d\vvw)\nonumber\\
& = C \int_{\Delta_1}\int_{\R}\int_{\R_+} \min\left\{s_1t_1, \frac1{\theta_1^2}\right\}s_2t_2\frac{ (rw_1)^{1/{\alpha_1}}}{1+C_\delta (rw_1)^{2/\alpha_1}\theta_1^2}\frac{dr}{r^2}d\theta_1\Lambda(d\vvw)\nonumber\\
& = C_\delta\int_{\Delta_1}\int_\R \min\ccbb{s_1t_1,\frac1{\theta_1^2}}s_2t_2w_1 |\theta_1|^{\alpha_1-1}d\theta_1\Lambda(d\vvw)\nonumber\\
& = C_\delta(s_1t_1)^{1-{\alpha_1}/{2}}s_2t_2\int_{\Delta_1}w_1 \Lambda(d\vvw)
\leq C_\delta(s_1t_1)^{1-{\alpha_1}/{2}}s_2t_2.\label{eq:J1}
\end{align}

Next,
we show that \eqref{integral1} with $\Omega_\vvn^{0,0}$ replacedy by $\Omega_\vvn$ converges to zero for any given $\delta\in(0,1)$.
Again we divide the region of interest into three pieces,
$\Omega_\vvn = \Omega_\vvn^{0,1}\cup \Omega_\vvn^{1,0}\cup\Omega_\vvn^{1,1}$ with
\begin{align*}
\Omega_\vvn^{0,1} &:= \ccbb{n^*r\vvw\in T_{\vv\alpha}((0,\delta]\times(\delta,1])}\\
\Omega_\vvn^{1,0} &:= \ccbb{n^*r\vvw\in T_{\vv\alpha}((\delta,1]\times(0,\delta])}\\
\Omega_\vvn^{1,1} &:= \ccbb{n^*r\vvw\in T_{\vv\alpha}((\delta,1]^2
)},
\end{align*}
 and on each we apply the inequality
\begin{multline}\label{eq:boundg}
g\pp{(n^*rw_k)^{-1/\alpha_k}, {\theta_k}/{n'_k}}\\
 \le \begin{cases}
\displaystyle C_\delta n'_k \min\ccbb{(rw_k)^{1/\alpha_k},\frac1{(rw_k)^{1/\alpha_k}\theta_k^2}}
& (n^*rw_k)^{-1/\alpha_k}\in (0,\delta]\\ \\
C_\delta & (n^*rw_k)^{-1/\alpha_k}\in (\delta,1]
\end{cases}.
\end{multline}
For $\Omega_\vvn^{1,1}$, by (\ref{eq:D1}) and the above inequality, the integral \eqref{integral1} with $\Omega_\vvn^{0,0}$ replaced by $\Omega_\vvn^{1,1}$
is bounded by
\begin{multline}
 \frac{C_\delta n_2'|\vvn|^2s_2t_2}{|\vvn'||\vvn|^2} \int_\R\min\ccbb{s_1t_1, \frac1{|\theta_1|^2}}d\theta_1\int_{\Delta_1}\int_{(n^*)^{-1}(w_1^{-1}\vee w_2^{-1})}^{(n^*)^{-1}[(w_1\delta^{\alpha_1})^{-1}\vee(w_2\delta^{\alpha_2})^{-1}]}r^{-2} dr\Lambda(d\vvw)\\
 \leq \frac{C_\delta n_2'n^*}{|\vvn'|}(s_1t_1)^{1/2}s_2t_2\int_{\Delta_1}w_1\wedge w_2\Lambda(d\vvw)  
\le  C_\delta (s_1t_1)^{1/2}s_2t_2 n_1^{\alpha_1-1}\to 0.\label{eq:J2}
\end{multline}
Similarly, 
\eqref{integral1} with $\Omega_\vvn^{0,0}$ replaced by $\Omega_\vvn^{1,0}$ is bounded by
\begin{multline*}
  \frac{C_\delta n_2' s_2t_2}{|\vvn'|}\int_{\Delta_1}
\int_{(n^*w_1)\inv}  ^\infty
   \int_\R\min\ccbb{s_1t_1, \frac1{|\theta_1|^2}}d\theta_1\\
 \int_\R \min\ccbb{(rw_2)^{1/\alpha_2},\frac1{(rw_2)^{1/\alpha_2}\theta_2^2}}d\theta_2 r^{-2} dr\Lambda(d\vvw),
\end{multline*}
which then becomes
\begin{multline}
 \frac{C_\delta }{n_1'}(s_1t_1)^{1/2}s_2t_2\int_{\Delta_1}
\int_{(n^*w_1)\inv}^\infty
 r^{-2} dr\Lambda(d\vvw)\\
\leq  \frac{C_\delta n^*}{n_1'}(s_1t_1)^{1/2}s_2t_2\int_{\Delta_1}w_1
\Lambda(d\vvw)
\leq
 C_\delta (s_1t_1)^{1/2}s_2t_2 n_1^{\alpha_1-1}\to 0.\label{eq:J3}
\end{multline}
The last area $\Omega_\vvn^{0,1}$ requires a more careful treatment. 
In this case, \eqref{integral1} with $\Omega_\vvn^{0,0}$ replaced by $\Omega_\vvn^{0,1}$ is bounded by
\[
C_\delta \int_\R\int_{\Delta_1}\int_\R\min\ccbb{s_1t_1, \frac1{|\theta_1|^2}}s_2t_2\frac{1}{n'_1}g\pp{(n^*rw_1)^{-1/\alpha_1}, {\theta_1}/{n'_1}}
\ind_{\Omega_\vvn^{0,1}}
r^{-2} dr\Lambda(d\vvw)d\theta_1.
\]
Note that the current integrand
converges pointwisely to 0 as $n^*$ goes to infinity, since $n^*r\vvw$ will eventually leave the area $T_{\vv\alpha}((0,\delta]\times(\delta,1])$. Hence in order to prove the integral converges to 0 as well, it suffices to find an integrable upper bound, and then to apply the dominated convergence theorem. This can be done by applying bound (\ref{eq:boundg}) again:
\begin{multline*}
 \min\ccbb{s_1t_1, \frac1{|\theta_1|^2}}s_2t_2\frac{1}{n'_1}g\pp{(n^*rw_1)^{-1/\alpha_1}, {\theta_1}/{n'_1}}
 \ind_{\Omega_\vvn^{0,1}}
 r^{-2}\\
\leq  C_\delta s_2t_2 \min\ccbb{s_1t_1, \frac1{|\theta_1|^2}}\min\ccbb{(rw_1)^{1/\alpha_1},\frac1{(rw_1)^{1/\alpha_1}\theta_1^2}}r^{-2},
\end{multline*}
which is integrable:
\begin{align}
 \int_\R & \int_{\Delta_1} \int_\R C_\delta  s_2t_2 \min\ccbb{s_1t_1, \frac1{|\theta_1|^2}}\min\ccbb{(rw_1)^{1/\alpha_1},\frac1{(rw_1)^{1/\alpha_1}\theta_1^2}}r^{-2}dr\Lambda(d\vvw)d\theta_1\nonumber\\
& = C_\delta s_2t_2\int_\R\min\ccbb{s_1t_1, \frac1{|\theta_1|^2}}\int_{\Delta_1}\theta_1^{\alpha_1-1}w_1\Lambda(d\vvw)d\theta_1\nonumber\\
& \leq  C_\delta s_2t_2\int_\R\min\ccbb{s_1t_1, \frac1{|\theta_1|^2}}\theta_1^{\alpha_1-1}d\theta_1 = C_\delta s_1^{1-\alpha_1/2}
t_1^{1-\alpha_1/2}s_2t_2.\label{eq:J4}
\end{align}
Therefore, we conclude that \eqref{integral1} with $\Omega_\vvn^{0,0}$ replaced by $\Omega_\vvn$ vanishes asymptotically. This completes the proof.
\end{proof}

It remains to bound the fourth moment.

\begin{Prop}Under the assumption \eqref{eq:D3'}, for $\vvt$ such that $\floor{\vvn\cdot\vvt} = \vvn\cdot\vvt$, we have
\[
\esp S_\vvn(\vvt)^4 \le Cn_1^{2H_1+2}n_2^4t_1^{2H_1+2}t_2^4.
\]
\end{Prop}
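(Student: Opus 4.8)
The plan is to run the same Fourier scheme as for the second moment in this regime, but to exploit the complete dependence in the second direction ($H_2=1$) by integrating $\theta_2$ out at the very start. I would begin from the bound $\esp S_\vvn(\vvt)^4\le C(2\pi)^{-2}\int_{(-\pi,\pi)^2}J^*_{\floor{\vvn\cdot\vvt}}(\vv\theta)\what r(\vv\theta)\,d\vv\theta$ obtained in the derivation of~\eqref{eq:4th_1}, and perform the change of variables $\vv\theta\to\vv\theta/\vvn'$ with $\vvn'$ as in~\eqref{eq:D3'}. Since $n_1'=n_1$, the kernel factors as $J^*_{\floor{\vvn\cdot\vvt}}(\vv\theta/\vvn')=J_{\floor{n_1t_1}}(\theta_1/n_1)\,J_{\floor{n_2t_2}}(\theta_2/n_2')$, and as $\what r\ge0$ I may replace $J^*$ by $|J_1|\,|J_2|$. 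For the second factor I only need the trivial bound $|J_{\floor{n_2t_2}}(\theta_2/n_2')|\le J_{\floor{n_2t_2}}(0)\le C(n_2t_2)^4$, which is constant in $\theta_2$ and can be pulled out of the integral.

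The crucial point is the exact mass identity $\int_{-\pi}^{\pi}g(u,\phi)\,d\phi=2\pi$, valid for every $u\in(0,1]$: indeed $g(u,\cdot)=\sum_{\ell\in\Z}(1-u)^{|\ell|}e^{i\ell\,\cdot}$, whose zeroth Fourier coefficient is $1$. Writing $\what r(\vv\theta/\vvn')$ through~\eqref{eq:rhat} and integrating $\theta_2$ over $n_2'(-\pi,\pi)$, this identity collapses the $g$-factor in the second direction to $2\pi n_2'$ and leaves $\int_{n_2'(-\pi,\pi)}\what r(\vv\theta/\vvn')\,d\theta_2=\frac{2\pi n_2'}{n^*}\int_{\Delta_1}\int_0^\infty g\pp{(n^*rw_1)^{-1/\alpha_1},\theta_1/n_1}\indd{n^*r\vvw\in T_{\vv\alpha}((0,1]^2)}r^{-2}\,dr\,\Lambda(d\vvw)$. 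Bounding the surviving direction-one factor by $Cn_1\min\ccbb{(rw_1)^{1/\alpha_1},(rw_1)^{-1/\alpha_1}\theta_1^{-2}}$ and evaluating the $r$-integral as in~\eqref{eq:integrate_r} gives $\int_{n_2'(-\pi,\pi)}\what r(\vv\theta/\vvn')\,d\theta_2\le C\,n_1 n_2'(n^*)\inv|\theta_1|^{\alpha_1-1}$; it is precisely $\alpha_1<1$ that makes this $r$-integral converge at the origin.

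It then remains to integrate over $\theta_1$. Inserting the bound $|J_{\floor{n_1t_1}}(\theta_1/n_1)|\le Cn_1^4t_1^2\min\ccbb{t_1^2,|\theta_1|^{-2}}$ from~\eqref{eq:Hnp_DCT} and using the elementary identity $\int_\R\min\ccbb{t_1^2,|\theta_1|^{-2}}|\theta_1|^{\alpha_1-1}\,d\theta_1=Ct_1^{2-\alpha_1}$ (finite for $\alpha_1\in(0,2)$) reduces everything to a count of powers. Collecting the prefactors $|\vvn'|\inv$, $(n_2t_2)^4$, $n_1n_2'(n^*)\inv$ and $n_1^4t_1^{4-\alpha_1}$, and substituting $n^*=n_1^{\alpha_1}$, $n_2'=n_1^{\alpha_1/\alpha_2}$ and $|\vvn'|=n_1n_2'$, all the $n_2'$ and $\alpha_2$-dependence cancels and the bound collapses to $Cn_1^{4-\alpha_1}n_2^4t_1^{4-\alpha_1}t_2^4$, which is the asserted estimate once one recalls $2H_1+2=4-\alpha_1$ and $2H_2+2=4$.

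The step I expect to be most delicate is the decoupling itself: recognizing that the degenerate second direction should be handled by integrating $\theta_2$ against $g$ via the mass identity $\int_{-\pi}^\pi g\,d\phi=2\pi$, rather than by the region-by-region estimates (the splitting into $\Omega_\vvn^{0,0},\Omega_\vvn^{0,1},\Omega_\vvn^{1,0},\Omega_\vvn^{1,1}$) used for the covariance. That alternative route is also viable but heavier: it would require, in addition, the degenerate-direction estimate $\int_\R|J_{\floor{n_2t_2}}(\theta_2/n_2')|\,d\theta_2\le Cn_2'(n_2t_2)^3$ together with the constraints $n_1t_1\ge1$, $n_2t_2\ge1$ (from $\floor{\vvn\cdot\vvt}=\vvn\cdot\vvt$) and $\alpha_1<1$ in order to absorb the contributions of the off-diagonal regions into the leading $\Omega_\vvn^{0,0}$ term.
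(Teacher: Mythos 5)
Your proof is correct, but it takes a genuinely different route from the paper's. The paper recycles the region decomposition $\Omega_\vvn^{0,0},\Omega_\vvn^{0,1},\Omega_\vvn^{1,0},\Omega_\vvn^{1,1}$ from its covariance computation, observes that the fourth-moment integrand differs from the second-moment one only by the multiplicative factor $C|\vvn|^2(t_1t_2)^2$ (after replacing the $\theta_2$-decay in $D$ by the crude bound $s_2t_2$), and then sums the four contributions \eqref{eq:J1}--\eqref{eq:J4}; this produces a two-term bound $C[(n_1t_1)^{4-\alpha_1}(n_2t_2)^4+(n_1t_1)^3(n_2t_2)^4]$ and needs the hypothesis $\floor{\vvn\cdot\vvt}=\vvn\cdot\vvt$ (hence $n_1t_1\ge1$) to absorb the second term into the first. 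Your route instead integrates $\theta_2$ out at the start via the exact identity $\int_{-\pi}^{\pi}g(u,\phi)\,d\phi=2\pi$ (valid since $g(u,\cdot)$ is the Fourier series of $(1-u)^{|\ell|}$), which collapses the degenerate direction cleanly; combined with the uniform bound $g\pp{(n^*rw_1)^{-1/\alpha_1},\theta_1/n_1}\le Cn_1\min\ccbb{(rw_1)^{1/\alpha_1},(rw_1)^{-1/\alpha_1}\theta_1^{-2}}$ -- which, as you implicitly use, holds with an absolute constant on all of $u\in(0,1]$, $|\theta_1|\le n_1\pi$ (split at $u=1/2$: for $u\le 1/2$ use $1-\cos\phi\ge 2\phi^2/\pi^2$, for $u>1/2$ the second branch is bounded below by $1/(2\pi^2)$ while $g\le 8$) -- this avoids the $\delta$-splitting entirely, uses $\alpha_1<1$ only to make the $r$-integral converge at the origin, and yields the single-term bound $C(n_1t_1)^{4-\alpha_1}(n_2t_2)^4$ directly, with no need for the lattice-point hypothesis. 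What the paper's approach buys is economy (it reuses bounds already established for the covariance); what yours buys is a shorter, self-contained estimate that is slightly sharper in that it holds for all $\vvt\in[0,1]^2$.
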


\begin{proof}
\begin{align}
\esp S_\vvn(\vvt)^4 &\le C|\vvn'|\inv\int_{\R^2}
J_{\floor{\vvn\cdot \vvt}}^*(\vv\theta/\vvn')
\ind_{\Xi_{\vvn'}}
\what r(\vv\theta/\vvn')d\vv\theta\nonumber\\
&=C\frac1{|\vvn'|n^*} \int_{\Delta_1}\int_0^\infty
\int_{\R^2}J_{\floor{\vvn\cdot \vvt}}^*\left({\vv\theta}/{\vvn'}\right)
\ind_{\Xi_{\vvn'}}
\wt h_\vvn(r,\vvw,\vv\theta)
r^{-2}d\vv\theta dr \Lambda(d\vvw).\label{eq:JJ}
\end{align}
The upper bound of $J$ in \eqref{eq:boundH} (recall that $n_2' = n_1^{\alpha_1/\alpha_2}\gg n_2$) this time yields
\[
J_{\floor{\vv n\cdot \vv t}}^* (\vv\theta/\vv n') \ind_{\Xi_{\vvn'}}\le C|\vvn|^4 t_1^2t_2^4\min\ccbb{t_1^2,\frac1{\theta_1^2}}.
\]
Therefore,
 the integrand of \eqref{eq:JJ} is bounded by
$$
|\vv n|^2(t_1t_2)^2 \cdot  |\vvn|^2\min\left\{t_1^2,\frac{1}{\theta_1^2}\right\}t_2^2 \wt h_\vvn(r,\vvw,\vv\theta)
r^{-2}.
$$
In view of \eqref{eq:D1}, we recognize that one can obtain an upper bound for 
the integral in \eqref{eq:JJ} that differs from the upper bound for 
the integral in \eqref{integral1} in the previous case by a multiplicative constant $C_\delta |\vvn|^2(t_1t_2)^2$. In particular, combining \eqref{eq:J1}, \eqref{eq:J2}, \eqref{eq:J3} and \eqref{eq:J4}, we thus arrive at
\begin{align*}
\esp S_\vvn(\vvt)^4 &\le C_\delta \frac1{|\vvn'|n^*}\cdot |\vvn|^2(t_1t_2)^2 \cdot |\vvn'||\vvn|^2 \pp{t_1t_2^2n_1^{\alpha_1-1} + t_1^{2-\alpha_1}t_2^2}\\
& \le C_\delta n_1^{4-\alpha_1}n_2^4(t_1t_2)^2\pp{t_1t_2^2n_1^{\alpha_1-1} + t_1^{2-\alpha_1}t_2^2} \\
& = C_\delta \bb{(n_1t_1)^{4-\alpha_1}(n_2t_2)^4 + (n_1t_1)^3(n_2t_2)^4}.
\end{align*}
By our assumption, $(n_1t_1)^3 \le (n_1t_1)^{4-\alpha_1}$, hence the desired result follows.
\end{proof}
\section{Boundary case of non-critical speed}\label{sec:boundary}
The case for non-critical speed
\equh\label{eq:boundary}
\alpha_1=1 \qmand n_1\gg n_2^{\alpha_2},
\eque
 was not discussed in Theorem \ref{thm:non_critical}. As one would expect by observing the two regimes in Theorem \ref{thm:non_critical}, the case with $\alpha_1=1$ is degenerate and the limit is a fractional Brownian sheet with Hurst indices
\[
\vvH = \pp{1/2, 1}.
\]
 The normalization in the functional limit theorem involves this time, however, a logarithmic term in $\vv n$. We expect to be able to show that
$$
\frac{\what S_\vvn(\vvt)}{\sqrt{n_1\log n_1}n_2\sqrt{m(\vvn)}}\weakto \sigma\ccbb{\B_\vvt^{\vvH}}_{\vvt\in[0,1]^2},
$$
with
\[
\sigma^2 = 4\pi \int_{\Delta_1}w_1\Lambda(d\vvw).
\] Here, we only prove the limit theorem for the covariance function of a single random field.

\begin{Prop}
\label{prop:cov}
With \eqref{eq:boundary}, if
\equh\label{eq:Lambda_boundary}
\int_{\Delta_1}
|\log w_2|\Lambda(d\vvw)<\infty,
\eque
then,
\[
\lim_{\vvn\to\infty} \frac{\cov(S_{\vvn}(\vvs),S_{\vvn}(\vvt))}{n_1\log(n_1)n_2^2}  = \sigma^2\cov(\mathbb B_{\vvs}^{\vvH}, \mathbb B_{\vvt}^{\vvH}).
\]
\end{Prop}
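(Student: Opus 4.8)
The plan is to follow the frequency-domain route of regime~\eqref{case:D,beta<1}, taking $\vvn'=\vvn$ and $n^*=n_2^{\alpha_2}$, and to track the single new feature produced by the degeneracy $\vvH=(1/2,1)$. Starting from Lemma~\ref{lem:covariance} and the change of variables $\vv\theta\to\vv\theta/\vvn$, I would write
\[
\cov(S_\vvn(\vvs),S_\vvn(\vvt))=\frac{|\vvn|}{(2\pi)^2}\int_{\vvn\cdot(-\pi,\pi)^2}\frac{D_{\vvn,\vvs,\vvt}(\vv\theta/\vvn)}{|\vvn|^2}\,\what r(\vv\theta/\vvn)\,d\vv\theta ,
\]
so that, by Lemma~\ref{lem:D}, the first coordinate contributes the non-degenerate fractional kernel with $H_1=1/2$, while all the difficulty is concentrated in the second coordinate, where $H_2=1$ corresponds to complete dependence.

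The heart of the matter is the behaviour of $\what r(\vv\theta/\vvn)$. Setting $\alpha_1=1$ in Proposition~\ref{prop:rhat_theta_n} (equivalently, letting $H_2\uparrow 1$ in~\eqref{eq:case_D,H_2}) formally yields, using $B(1/2,1/2)=\pi$,
\[
\what r(\vv\theta/\vvn)\sim \frac{2\pi\alpha_2\,n_2}{|\theta_2|}\int_{\Delta_1}w_1\,\Lambda(d\vvw).
\]
Substituting this into the display above and letting $\vvn\to\infty$ naively would leave the second-coordinate integral $\int_\R (e^{is_2\theta_2}-1)\wb{(e^{it_2\theta_2}-1)}|\theta_2|^{-3}\,d\theta_2$, which diverges logarithmically at $\theta_2\to0$ because the integrand is $\sim s_2t_2/|\theta_2|$ there. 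This is exactly the boundary $H=1$ of~\eqref{eq:cov_fBm}, where $C_H$ blows up; it signals that the dominated convergence argument used in the other regimes cannot be applied with the formal limit, and that the correct normalization carries an extra logarithm.

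To extract the logarithm I would keep $\vvn$ finite and examine why the $1/|\theta_2|$ growth of $\what r$ is cut off near the origin. In the integral representation of $\what r(\vv\theta/\vvn)$ the radial variable $r$ effectively ranges only up to the scale at which the first-coordinate factor $g((n^*rw_1)^{-1},\theta_1/n_1)$ stops behaving like $2n^*rw_1$, namely $r\lesssim n_1/(n^* w_1|\theta_1|)$; this truncation makes $\what r(\vv\theta/\vvn)$ saturate once $|\theta_2|$ drops below a threshold $\theta_2^\ast(\theta_1,\vvw)\sim (n_2^{\alpha_2}w_1|\theta_1|/(n_1w_2))^{1/\alpha_2}$. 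Splitting the $\theta_2$-integral at $\theta_2^\ast$, the region $|\theta_2|>\theta_2^\ast$ produces $\int_{\theta_2^\ast}^{O(1)}s_2t_2\,|\theta_2|^{-1}d\theta_2\sim \tfrac1{\alpha_2}\log(n_1/n_2^{\alpha_2})$, the logarithm of order $\log n_1$ recorded in the normalization, while the saturated region $|\theta_2|<\theta_2^\ast$ contributes a bounded amount and is therefore negligible against it. Collecting the constant $2\pi\alpha_2$ above together with the first-coordinate kernel $\int_\R(e^{is_1\theta_1}-1)\wb{(e^{it_1\theta_1}-1)}|\theta_1|^{-2}\,d\theta_1=4\pi^2(s_1\wedge t_1)$ (the $H_1=1/2$ case of~\eqref{eq:cov_fBm}, with $C_{1/2}=2\pi$) should give precisely $\sigma^2=4\pi\int_{\Delta_1}w_1\,\Lambda(d\vvw)$ and $\cov(\B^\vvH_\vvs,\B^\vvH_\vvt)=(s_1\wedge t_1)(s_2t_2)$.

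I expect the main obstacle to be making the above splitting rigorous: since the formal limit integrand is not integrable, one must produce $\vvn$-uniform two-sided bounds on $\what r(\vv\theta/\vvn)$ (refining Proposition~\ref{prop:rhat_theta_n}) that capture both the $1/|\theta_2|$ regime and the saturation at $\theta_2^\ast$, and then show that the $\theta_1$-dependence of the cutoff only perturbs the logarithm by $O(1)$, so that at leading order the $\theta_1$- and $\theta_2$-integrals decouple. Finally, the condition~\eqref{eq:Lambda_boundary}, i.e.\ $\int_{\Delta_1}|\log w_2|\,\Lambda(d\vvw)<\infty$, is exactly what is needed to integrate the subleading term $\log(w_2/w_1)$ arising from $\log(1/\theta_2^\ast)$ against $\Lambda$, ensuring that $\int_{\Delta_1}w_1\log(w_2/w_1)\,\Lambda(d\vvw)$ is finite and hence does not interfere with the $\log n_1$ asymptotics.
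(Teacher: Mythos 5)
Your route is genuinely different from the paper's. You keep $\vvn'=\vvn$ and $n^*=n_2^{\alpha_2}$ (the parametrization of regime~\eqref{case:D,beta<1}) and locate the logarithm in the small-$\theta_2$ saturation of $\what r(\vv\theta/\vvn)$, whereas the paper takes $\vvn'=(n_1,n_1^{1/\alpha_2})$ and $n^*=n_1$, so that the second-coordinate Dirichlet kernel degenerates to $s_2t_2$, integrates $\theta_2$ out exactly (each radial slice contributing a constant), and extracts the logarithm from the lower cutoff $r\ge (n_1(w_1\wedge w_2))\inv$ of the radial integral, which produces $\log n^*=\log n_1$ directly. Your identification of the mechanism (the formal limit integrand is non-integrable at $\theta_2=0$, so the logarithm must come from an $\vvn$-dependent cutoff), of the saturation scale $\theta_2^*$, and of the role of \eqref{eq:Lambda_boundary} are all sound and consistent with the estimates in Proposition~\ref{prop:rhat_theta_n}.

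The genuine gap is the final step of your second paragraph: your own computation produces the factor $\tfrac1{\alpha_2}\log(n_1/n_2^{\alpha_2})$, and you then assert that this is ``the logarithm of order $\log n_1$ recorded in the normalization.'' Under the sole hypothesis \eqref{eq:boundary} these are not asymptotically equivalent: for $n_2^{\alpha_2}=n_1^{1/2}$ one gets $\tfrac12\log n_1$, and for $n_2^{\alpha_2}=n_1/\log n_1$ one gets only $\log\log n_1$. You must therefore either justify replacing $\log(n_1/n_2^{\alpha_2})$ by $\log n_1$ (impossible in this generality) or impose an extra condition such as $\log n_2 = o(\log n_1)$. This is precisely the point where your decomposition and the paper's diverge: in the paper's coordinates the issue is concentrated in the uniformity claim \eqref{eq:n2}, which asserts that the inner $\theta_2$-integral $\psi_\vvn(r,\vvw)$ is close to $\pi s_2t_2$ uniformly over all of $\{n^*r\vvw\in T_{\vv\alpha}((0,\delta]^2)\}$, while your saturation analysis says that for $rw_2\lesssim n_2^{\alpha_2}/n_1$ (a nonempty sub-range of that region for every fixed $\delta$ once $n_2\ge 1/\delta$) the bump of $g$ in $\theta_2$ outspreads the effective support of $D_{n_2,s_2,t_2}(\cdot/n_2')/n_2^2$ and $\psi_\vvn$ drops far below $\pi s_2t_2$. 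Any rigorous version of your splitting must confront this tension head-on: either show that the contribution lost below the saturation threshold is $o(\log n_1)$, which again forces $\alpha_2\log n_2=o(\log n_1)$, or conclude that the natural normalization is $n_1n_2^2\log(n_1/n_2^{\alpha_2})$. Until that is resolved, the proof cannot be completed as written.
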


\begin{proof}
Under the assumption \eqref{eq:boundary}, we take
\[
\vvn' = (n_1,n_1^{1/\alpha_2}), n^* = n_1.
\]
We still need to work with triple integral representation of the covariance function
\begin{multline}\label{eq:triple}
\frac{\cov(S_{\vvn}(\vvs),S_{\vvn}(\vvt))}{n_1\log (n_1)n_2^2}  =  \frac1{(2\pi)^2\log n_1}
 \int_{\Delta_1}\int_0^\infty\int_{\vvn'\cdot(-\pi,\pi)^2}
 \inddd{n^*r\vvw\in T_{\vv\alpha}((0,1]^2)}
 \\
 \times \frac{D_{\vvn,\vvs,\vvt}({\vv\theta}/{\vvn'})}{|\vvn|^2} \prodd k12 \frac{g\pp{(n^*rw_k)^{-1/\alpha_k}, {\theta_k}/{n'_k}}}{(n^*)^{1/\alpha_k}}d\vv\theta\frac{dr}{r^{2}} \Lambda(d\vvw).
\end{multline}
With our choice of $\vvn'$, we have that formally, by \eqref{eq:limit_D} and \eqref{eq:limit_g},
 \eqref{eq:triple} is asymptotically equivalent to
\begin{multline*}
\frac{s_2t_2}{(2\pi)^2\log n_1}\\
\times \int_{\Delta_1}\int_{1/(n^*(w_1\wedge w_2))}^\infty\int_{\R^2}\frac{(e^{is_1\theta_1}-1)\wb{({e^{it_1\theta_1}-1})}}{|\theta_1|^2} \prodd k12\frac{2(rw_k)^{-1/\alpha_k}}{(rw_k)^{-2/\alpha_k}+\theta_k^2}d\vv\theta\frac{dr}{r^{2}} \Lambda(d\vvw).
\end{multline*}
For the above triple integral, we first integrate with respect to $d\theta_2$. The above then  becomes
\[
\frac{s_2t_2}{2\pi \log n_1}\int_{\Delta_1}\int_{\R}\frac{(e^{is_1\theta_1}-1)\wb{(e^{it_1\theta_1}-1)}}{|\theta_1|^2}\int_{1/(n^*(w_1\wedge w_2))}^\infty \frac{2(rw_1)^{-1}}{(rw_1)^{-2}+\theta_1^2}\frac{dr}{r^{2}}d\theta_1 \Lambda(d\vvw).
\]
Now, for the inner integral with respect to $dr$, we write it as
\begin{align*}
\int_0^{n^*(w_1\wedge w_2)}\frac{2rw_1\inv}{r^2w_1^{-2}+\theta_1^2}dr & = w_1\int_0^{n^*(w_1\wedge w_2)/(w_1\theta_1)}\frac{2r}{r^2+1}dr \\
& = w_1\log \bb{1+\pp{n^*\frac{w_1\wedge w_2}{w_1\theta_1}}^2}\sim 2w_1 \log n^*.
\end{align*}
To sum up, we have formally shown that
\begin{align*}
\frac{ \cov(S_{\vvn}(\vvs),S_{\vvn}(\vvt))}{n_1\log (n_1)n_2^2} & =\frac{s_2t_2}\pi\int_{\Delta_1}\int_{-\infty}^{\infty}\frac{(e^{is_1\theta_1}-1)\wb{({e^{it_1\theta_1}-1})}}{|\theta_1|^2}d\theta_1 w_1\Lambda(d\vvw)\\
& = \cov(\mathbb B_{\vvs}^{\vvH}, \mathbb B_{\vvt}^{\vvH})\cdot 4\pi\int_{\Delta_1}w_1\Lambda(d\vvw),
\end{align*}
where $\vvH=(1/2,1)$.
Now we provide a rigorous proof of this convergence. We shall apply several approximations in a row.
Again we start with \eqref{eq:triple}.
Recall our notations for $\Omega_\vvn^{0,0}$ and $\Omega_\vvn$. We first remark that for this integration over
$\Omega_\vvn$
and without the $\log n_1$ normalization, it is same as
\eqref{integral1} with $\Omega_\vvn^{0,0}$ replaced by $\Omega_\vvn$, and can be bounded by $C_\delta$ (see \eqref{eq:J2}, \eqref{eq:J3} and \eqref{eq:J4}).
Therefore, the contribution to the limit comes from the integration over $\Omega_\vvn^{0,0}$.

Set
\[
D_{n,s,t}(\theta) = {\rm Re}\pp{D_{\floor{ns}}(\theta)\wb{D_{\floor{nt}}(\theta)}}.
\]
The imaginary part of the integrand integrates to zero, because of the symmetry.
 We first look at
\[
\psi_\vvn(r,\vvw):= \int_{-n_2'\pi}^{n_2'\pi}\frac{D_{n_2,s_2,t_2}(\theta_2/n_2')}{n_2^2}\frac{g\pp{(n^*rw_2)^{-1/\alpha_2},\theta_2/n_2'}}{(n^*)^{1/2}}
\ind_{\Omega_\vvn^{0,0}}
d\theta_2.
\]
One can show by finding upper and lower bounds of $g$ depending on $\delta$ but not $\theta$,
\equh\label{eq:n2}
\lim_{\delta\downarrow0}\limsup_{\vvn\to\infty}\sup_{n^*r\vvw \in T_{\vv\alpha}((0,\delta])}\abs{\psi_\vvn(r,\vvw) - s_2t_2\pi} = 0.
\eque
From now on, write
\[
\rho = w_1\delta\wedge w_2\delta^{\alpha_2}.
\]
Introduce
\equh\label{eq:n1}
\Psi_{n_1}^\pm(\delta):=\frac{1}{\log n_1}\int_{\Delta_1}\int_{-n_1\pi}^{n_1\pi}\int^\infty_{(n_1\rho)\inv} \frac{D^\pm_{n_1,s_1,t_1}(\theta_1/n_1)}{n_1^2}
\frac{g\pp{(n_1rw_1)^{-1},\theta_1/n_1}}{n_1}\frac{dr}{r^2}d\theta_1\Lambda(dw_1).
\eque
We shall treat the positive and negative parts of $D$, denoted by $D^\pm$, separately, and we accordingly let $\mathfrak D_{s_1,t_1}^\pm(\theta_1)$ denote the positive/negative part of the real part of $(e^{is_1\theta_1}-1)\wb{(e^{it_1\theta_1}-1)}/|\theta_1|^2$.
We shall show
\begin{multline}\label{eq:sandwich}
(2-\delta)\sigma_\pm^2 \le \liminf_{n_1\to\infty}\Psi^\pm_{n_1}(\delta)\le \limsup_{n_1\to\infty}\Psi^\pm_{n_1}(\delta) \le 2\sigma_\pm^2\\
\qmwith \sigma_\pm^2 :=\int_{\Delta_1} w_1\Lambda(d\vvw)\int_\R\mathfrak D^\pm_{s_1,t_1}(\theta_1)d\theta_1  \mbox{ and for all }\delta\in(0,1),
\end{multline}
from which, and thanks to \eqref{eq:n2}, the desired result follows. We only prove the above for $\Psi_{n_1}^+(\delta)$, as the proof for $\Psi_{n_1}^-(\delta)$ is identical.

To simplify further the notation, introduce
\[
\wt g_{n_1}(r,\theta_1,w_1) := \frac{rw_1\inv(2-r(n_1w_1)\inv)}{r^2w_1^{-2}+2(1-r(n_1w_1)\inv)n_1^2(1-\cos(\theta_1/n_1))}.
\]
By change of variable $r\to r^{-1}$, we rewrite \eqref{eq:n1} as
\begin{align*}
\Psi^+_{n_1}(\delta) & = \frac1{\log n_1}\int_{\Delta_1}\int_{-n_1\pi}^{n_1\pi}\int_0^{n_1\rho}\frac{D^+_{n_1,s_1,t_1}(\theta_1/n_1)}{n_1^2}\wt g_{n_1}(r,\theta_1,w_1)drd\theta_1\Lambda(d\vvw)\\
& =:\frac1{\log n_1}\int_{\Delta_1}\int_\R\int_0^{n_1\rho}f_{n_1}(r,\theta_1,w_1)\inddd{\theta_1\le n_1\pi}drd\theta_1\Lambda(d\vvw),
\end{align*}
break the triple integral above into two parts
\equh\label{eq:two_parts}
\int_{\Delta_1}\int_{-n_1\pi}^{n_1\pi}\int_1^{n_1\rho}f_{n_1}drd\theta_1\Lambda(d\vvw) \qmand
\int_{\Delta_1}\int_{-n_1\pi}^{n_1\pi}\int_0^1f_{n_1}drd\theta_1\Lambda(d\vvw),
\eque
and treat them separately. Asymptotically, the former contributes and the latter is negligible.

For the first integral in \eqref{eq:two_parts}, by
\[
\wt g_{n_1}(r,\theta_1,w_1)\inddd{r\in(0,n_1\rho]}\le \frac{2w_1}r,
\]
we have
\begin{multline*}
\frac1{\log n_1}\int_{\Delta_1}\int_\R\int_1^{n_1\rho}f_{n_1}(r,\theta_1,w_1)\inddd{|\theta_1|\le n_1\pi}drd\theta_1\Lambda(d\vvw)\\
\le
2\int_{\Delta_1}w_1\Lambda(d\vvw)\int_\R \frac{D_{n_1}^+(\theta_1/n_1)}{n_1^2}\inddd{|\theta_1|\le n_1\pi}d\theta_1.
\end{multline*}
Therefore,
\equh\label{eq:limsup}
\limsup_{n_1\to\infty}\frac1{\log n_1}\int_{\Delta_1}\int_{-n_1\pi}^{n_1\pi}\int_1^{n_1\rho}f_{n_1}(r,\theta_1,w_1)drd\theta_1\Lambda(d\vvw)\le 2\sigma_+^2.
\eque

For a lower bound, remark that
\[
\wt g_{n_1}(r,\theta_1,w_1) \ge \frac{(2-\delta)w_1r}{r^2+C\theta_1^2w_1^2} \mfa r\in(0,n_1\rho].
\]
We have, for $n_1\rho\ge 1$,
\begin{multline}
 \int_1^{n_1\rho}   \wt g_{n_1}(r,\theta_1,w_1)dr
 \ge \frac{2-\delta}2w_1 \bb{\log \pp{n_1\rho}^2 - \log (1+C\theta_1^2w_1^2)}\\
 \ge (2-\delta)w_1\pp{\log n_1 - C(1+|\log w_1|+|\log w_2|+|\log\delta| + |\log |\theta_1||},\label{eq:lower_bound}
\end{multline}
where in the last step we used inequalities $\log(n_1\rho)\ge \log n_1 - |\log\rho| \ge \log n_1 - 2\pp{|\log w_1|+|\log w_2| + |\log \delta|}$
and $\log(1+C\theta_1^2w_1^2) \le \log 2+|\log (2C\theta_1^2w_1^2)| \le C(1+|\log|\theta_1||+|\log w_1|)$.

Since \eqref{eq:lower_bound} does not hold uniformly in $n_1$ for all $\vvw\in\Delta_1$,  we cannot apply it directly when integrating with respect to $\Lambda(d\vvw)$. Instead, we first apply  the above to the integral restricted to $\{\vvw\in\Delta_1, w_1\in[\epsilon,1-\epsilon]\}$, and then let $\epsilon\downarrow 0$. Eventually we obtain
\equh\label{eq:liminf}
\liminf_{n_1\to\infty}\frac1{\log n_1}\int_{\Delta_1}\int_{-n_1\pi}^{n_1\pi}\int_1^{n_1\rho}f_{n_1}(r,\theta_1,w_1)drd\theta_1\Lambda(d\vvw)\ge (2-\delta)\sigma_+^2.
\eque
In this step, we need the assumption \eqref{eq:Lambda_boundary}.

Next, to deal with the second integral in \eqref{eq:two_parts}, we have, using the upper bound on $D$,
\begin{multline*}
\int_{-n_1\pi}^{n_1\pi}\int_0^1 f_{n_1}(r,\theta_1,w_1) dr d\theta_1 \\
\le C\int_0^1\int_{-n_1\pi}^{n_1\pi}\min\ccbb{1,\frac1{|\theta_1|^2}}\frac{rw_1}{r^2+Cn_1^2(1-\cos(\theta_1/n_1))w_1^2}d\theta_1dr.
\end{multline*}
By restricting the inner integration to $\{\theta_1\in\R: |\theta_1|\le n_1\delta\}$ and $\{\theta_1\in\R: n_1\delta < |\theta_1|\le n_1\pi\}$, and bounding $n_1^2(1-\cos(\theta_1/n_1))$ from below by $C_\delta\theta_1^2$ and $C_\delta$, respectively,  one eventually has
\equh\label{eq:second}
\int_{\Delta_1}\int_{-n_1\pi}^{n_1\pi}\int_0^1f_{n_1}drd\theta_1\Lambda(d\vvw)
\le C_\delta (1+1/n_1) \int_{\Delta_1}w_1\Lambda(d\vvw).
\eque
Combining \eqref{eq:limsup}, \eqref{eq:liminf} and \eqref{eq:second}, we have proved \eqref{eq:sandwich} and hence the desired result.
\end{proof}

\end{document}